\newtheorem{theorem}{Theorem}[section]
\newtheorem{lemma}[theorem]{Lemma}
\newtheorem{prop}[theorem]{Proposition}
\theoremstyle{definition}
\newtheorem{defn}[theorem]{Definition}
\newtheorem{rem}[theorem]{Remark}
\newtheorem{remark}[theorem]{Remark}
\newtheorem{question}[theorem]{Question}
\numberwithin{equation}{section}
\def\ggg{\mathfrak{g}}
\def\ca{\mathcal{A}}
\def\ct{\mathcal{T}}
\def\cz{\mathcal{Z}}
\def\cq{\mathcal{Q}}
\def\HC{\mathcal{HC}}
\def\cs{\mathcal{S}}
\def\sR{\mathsf{R}}
\def\sr{\mathsf{r}}
\def\sj{\mathsf{j}}
\def\kkk{\mathfrak{k}}
\def\ggg{\mathfrak{g}}
\def\mmm{\mathfrak{m}}
\def\ppp{\mathfrak{p}}
\def\hhh{\mathfrak{h}}
\def\bbb{\mathfrak{b}}
\def\nnn{\mathfrak{n}}
\def\bba{\mathbb{A}}
\def\bbc{\mathbb{C}}
\def\bbf{\mathbb{F}}
\def\bbz{\mathbb{Z}}
\def\bbk{{\mathds{k}}}
\def\bg{{\mathbf{g}}}
\def\bt{{\mathbf{t}}}
\def\df{{\mathbf{f}}}
\def\sfd{\textsf{d}}
\def\Lie{\text{Lie}}
\def\ad{\text{ad}}
\def\ker{\text{Ker}}
\def\Ad{\text{Ad}}
\def\End{{\text{End}}}
\def\Ext{{\text{Ext}}}
\def\Gr{\text{gr}}
\def\Frac{\mbox{Frac}}
\def\rank{\text{rank}}
\def\op{\text{op}}
\def\Specm{\text{Specm}}
\def\reg{\text{reg}}
\def\sing{\text{sing}}
\begin{document}

\title[Centers and Azumaya loci of finite $W$-algebras]{Centers and Azumaya loci for finite $W$-algebras in positive characteristic}
\author{Bin Shu and Yang Zeng}

\address{School of Mathematical Sciences, East China Normal University, Shanghai 200241, China}
\email{bshu@math.ecnu.edu.cn}

\address{School of Statistics and Mathematics, Nanjing Audit University, Nanjing, Jiangsu Province 211815, China}
\email{zengyang214@163.com}
\thanks{
{{\it{Mathematics Subject Classification}} (2010): Primary 17B50, Secondary 17B05 and 17B08.
  {\it{Key words}}: finite $W$-algebras, reduced $W$-algebras, centers, Veldkamp's theorem, Azumaya locus.
This work is  supported partially by the NSFC (Nos. 11671138; 11701284; 11771279), Shanghai Key Laboratory of PMMP (No. 13dz2260400).
}}

\begin{abstract}
In this paper, we study  the center $Z$ of the finite $W$-algebra $\ct(\ggg,e)$ associated with a semi-simple Lie algebra $\ggg$ over an algebraically closed field $\bbk$ of characteristic $p\gg0$, and an arbitrarily given nilpotent element $e\in\ggg$. We obtain an analogue of Veldkamp's theorem on the center. 
For the maximal spectrum $\Specm(Z)$, we show that its Azumaya locus  coincides with its smooth locus of smooth points. The former locus  reflects irreducible representations of  maximal dimension for $\ct(\ggg,e)$.
\end{abstract}
\maketitle
\setcounter{tocdepth}{1}
\tableofcontents
\section*{Introduction}
\subsection{}
A finite $W$-algebra $U(\ggg_\bbc,\hat e)$ is a certain associative algebra associated to a complex semi-simple Lie algebra ${\ggg_\bbc}$ and a nilpotent  element $\hat e\in{\ggg}_\bbc$. The study of finite $W$-algebras can be traced back to Kostant's work in the case when $\hat e$ is regular \cite{Ko}, whose construction was generalized to arbitrary even nilpotent elements by Lynch \cite{Ly}.  Premet developed the finite $W$-algebras in full generality in \cite{Pre1}. After the proof of the celebrated Kac-Weisfeiler conjecture for Lie algebras of reductive groups in \cite{P1}, Premet first constructed a modular version of finite $W$-algebras $U_\chi(\ggg,e)$ in \cite{Pre1} (they will be called the reduced extended $W$-algebras in this paper). By means of a complicated but natural ``admissible'' procedure, the finite $W$-algebras over $\bbc$ arise from the modular version,  and they are shown to be  filtrated deformations of the coordinate rings of Slodowy slices. The most important ingredient there is the construction of the PBW basis of finite $W$-algebras (cf. \cite[\S4]{Pre1}).

As a counterpart of the finite $W$-algebra $U(\ggg_\bbc,\hat e)$,  Premet introduced in \cite{Pre3} the finite $W$-algebra $\ct(\ggg,e)$ (which is denoted by $U(\ggg,e)$ there) 
over ${\bbk}=\overline{\mathbb{F}}_p$, and also the extended finite $W$-algebra $U(\ggg,e)$ (denoted by $\widehat U(\ggg,e)$ in \cite{Pre3}). In the same paper, the $p$-centers of $U(\ggg,e)$ are introduced. 
In his further work \cite{Pre2}, the $p$-center of the finite $W$-algebra $\ct(\ggg,e)$ is also introduced, and the relation between $\ct(\ggg,e)$ and the reduced $W$-algebra $\ct_\eta(\ggg,e)$ with $\eta\in\chi+\mmm^\perp$ (denoted by $U_\eta(\ggg,e)$ there) is discussed.

In the work of Premet, the ${\bbk}$-algebra $\ct(\ggg,e)$ is obtained from the $\bbc$-algebra $U(\ggg_\bbc,\hat e)$ through ``reduction modulo $p$". Hence the characteristic of field ${\bbk}$ must meet the condition that $p\in\Pi(A)$ for some admissible ring $A$ there (see \S\ref{211} for more details), thus sufficiently large. In our arguments, some will follow Premet's result. So we will assume that the characteristic $p$ of the base field $\bbk$ will be big enough in the present paper.
In a recent work of Goodwin-Topley \cite{GT}, the authors generalized Premet's work to the case with $p=\text{char}(\bbk)$ satisfying Jantzen's standard hypotheses (dependent on  the corresponding reductive algebraic group $G$ of $\ggg$). Roughly speaking, Goodwin-Topley's arguments are still valid to the most statements of the present paper. For the simplicity, we will do not judge which statements are under less restriction on $p$.

\subsection{}
The main purpose of the present paper is to develop  the theory of centers and the associated Azumaya property for the finite $W$-algebra $\ct(\ggg,e)$ in positive characteristic $p\gg0$. Our approach is to  generalize the classical theory of the centers of the universal enveloping algebra $U(\ggg)$ of a Lie algebra $\ggg$, which was first developed  by Veldkamp (see \cite{Ve}, \cite{KW} and \cite{MiRu}); and also applying Brown-Goodearl's arguments in \cite{BGl} on the Azumaya property for the algebras satisfying Auslander-regular and Maculay conditions to the modular finite $W$-algebra case.
We will make a critical use of Premet's results of the centers of complex and modular finite $W$-algebras in \cite{P3}, \cite{Pre3} and \cite{Pre2}, and then of the geometry properties of the Slodowy slices in \cite{Pre1}. Let us introduce the paper as below.

Let $G_\mathbb{C}$ be a simple, simply connected algebraic group over $\bbc$, and ${\ggg}_\mathbb{C}=\text{Lie}(G_\mathbb{C})$. Let $\hat e$ be a nilpotent element in ${\ggg}_\mathbb{C}$. Fix an $\mathfrak{sl}_2$-triple $(\hat e,\hat h,\hat f)$ of $\ggg_\bbc$, and denote by $({\ggg}_\mathbb{C})_{\hat e}:=\text{Ker}(\ad\,\hat e)$ in ${\ggg}_\mathbb{C}$. The linear
operator ad\,$\hat h$ defines a ${\bbz}$-grading ${\ggg}_\mathbb{C}=\bigoplus_{i\in{\bbz}}{\ggg}_\bbc(i)$. Let $(\cdot,\cdot)$  be a scalar multiple of the Killing form of $\ggg_\mathbb{C}$ such that $(\hat e,\hat f)=1$, and define $\chi\in\ggg_\mathbb{C}^*$ by letting $\chi(\hat x)=(\hat e,\hat x)$ for all $\hat x\in{\ggg}_\mathbb{C}$.
Let $\mathfrak{l}_\bbc$ and $\mathfrak{l}'_\bbc$ be  Lagrangian subspaces of $\ggg_\bbc(-1)$ with respect to $(\hat e,[\cdot,\cdot])$ such that $\ggg_\bbc(-1)=\mathfrak{l}_\bbc\oplus\mathfrak{l}'_\bbc$.
Then we define a nilpotent subalgebra $\mmm_\bbc:=\mathfrak{l}'_\bbc\oplus\bigoplus_{i\leqslant-2}\ggg_\bbc(i)$ of $\ggg_\bbc$. Set ${\ppp}_\mathbb{C}:=\bigoplus_{i\geqslant 0}{\ggg}_\mathbb{C}(i)$ and $\tilde{\mathfrak{p}}_\mathbb{C}:={\ppp}_\mathbb{C}\oplus\mathfrak{l}_\bbc$ to be a subalgebra and a subspace of $\ggg_\bbc$, respectively.

Set the generalized Gelfand-Graev module $(Q_\mathbb{C})_\chi:=
U({\ggg}_\mathbb{C})\otimes_{U(\mathfrak{m}_\mathbb{C})}{\bbc}_\chi$, where ${\bbc}_\chi:={\bbc}\hat1_\chi$ is a one-dimensional  $\mathfrak{m}_\mathbb{C}$-module~such that $\hat x.\hat1_\chi=\chi(\hat x)\hat1_\chi$ for all $\hat x\in\mathfrak{m}_\mathbb{C}$.
A finite $W$-algebra over $\bbc$ is defined to be
 $$U({\ggg}_\mathbb{C},\hat e):=(\text{End}_{\ggg_\mathbb{C}}(Q_\mathbb{C})_{\chi})^{\text{op}},$$
which is shown to be the quantization of the Slodowy slice $\cs:=\hat e+\text{Ker}(\ad\,\hat f)$ to the adjoint orbit $\Ad\,G_\bbc.\hat e$ (see \cite{GG} and \cite{Pre1}).

\subsection{}\label{zpcenter}
Let $G$, ${\ggg}$, $\ggg_e$, $\mmm$, $\mathfrak{p}$, $\tilde{\mathfrak{p}}$ and $Q_{\chi}$ be the modular counterparts of $G_\bbc$, $\ggg_\mathbb{C}$, $({\ggg}_\mathbb{C})_{\hat e}$, $\mmm_\bbc$, $\mathfrak{p}_\bbc$, $\tilde{\mathfrak{p}}_\bbc$ and $(Q_\mathbb{C})_{\chi}$, respectively.
With the aid of some admissible ring $A$ (see \S\ref{211}), through the procedure of ``reduction modulo $p$" we can
define the $A$-algebra $U({\ggg}_A,\hat e)$ and then the finite $W$-algebra $\ct({\ggg},e)$ over $\bbk$ by
$$\ct({\ggg},e):=U({\ggg}_A,\hat e)\otimes_A{\bbk},$$where $e:=\hat e\otimes_A1\in\ggg$ is an element obtained from the nilpotent element $\hat e\in\ggg_\mathbb{C}$ by ``reduction modulo $p$".

Let $\mathcal{M}$ be a connected unipotent subgroup of $G$ such that its Lie algebra is equal to $\mmm$ (see \S\ref{222} for the details). By \cite[Lemma 4.4 and Theorem 7.3]{GT} the finite $W$-algebra $\ct(\ggg,e)$ is isomorphic to the $\Ad\,\mathcal{M}$-invariant spaces $Q_\chi^{\mathcal{M}}$ and $(\text{End}_{\ggg}^\mathcal{M}Q_{\chi})^{\text{op}}$. Moreover, the extended finite $W$-algebra over ${\bbk}$ is defined to be $$U({\ggg},e):=(\text{End}_{{\ggg}}Q_{\chi})^{\text{op}}.$$
Let $Z_0(\ggg)$ be the $p$-center of the universal enveloping algebra $U(\ggg)$ which is by definition a subalgebra generated by $x^p-x^{[p]}$ for all $x\in\ggg$.
Given a subspace $V\subseteq\ggg$ we denote by $Z_0(V)$ the subalgebra of $Z_0(\ggg)$ generated by all $x^p-x^{[p]}$ with $x\in V$.
 Set $\mathfrak{a}=\{x\in\tilde{\mathfrak{p}}\mid (x,\text{Ker}(\ad\,f))=0\}$. It follows from \cite[Theorem 2.1(iii)]{Pre3} that  ${U}(\ggg,e)\cong \ct(\ggg,e)\otimes_\bbk Z_0(\mathfrak{a})$ as $\mathds{k}$-algebras.

The $p$-centers of the (extended) finite $W$-algebras are defined as follow. Denote by $Z(\ggg)$  the center of the universal enveloping algebra $U(\ggg)$.
Let $\varphi$ be the natural representation of $\ggg$ over $Q_\chi$, which induces a $\mathds{k}$-algebra homomorphism from $Z(\ggg)$ to $(\text{End}_{{\ggg}}Q_{\chi})^{\text{op}}$ such that $(\varphi(x))(1_\chi)=x.1_\chi\in Q_\chi$ for any $x\in Z(\ggg)$.
Then we have $\bbk$-algebra isomorphisms $\varphi(Z_0(\ggg))\cong\varphi(Z_0(\tilde{\mathfrak{p}}))\cong Z_0(\tilde{\mathfrak{p}})$ (see the proof of \cite[Theorem 2.1]{Pre3}). Now
we identify $Z_0(\tilde{\mathfrak{p}})$ with $\varphi(Z_0(\tilde{\mathfrak{p}}))$ and $\varphi(Z_0(\ggg))$. Correspondingly, $Z_0(\tilde{\mathfrak{p}})$ is naturally playing a role of the $p$-centers of $U(\ggg,e)$ as a counterpart of the $p$-centers in $U(\ggg)$.
Define its invariant subalgebra $Z_0(\tilde{\mathfrak{p}})^{\mathcal{M}}$ under the action of $\Ad\,\mathcal{M}$ as in \S\ref{312}.  According to \cite{GT}, $Z_0(\tilde{\mathfrak{p}})^{\mathcal{M}}$ will play a role similar to a $p$-center of $\ct(\ggg,e)$. For these $p$-centers, one can refer to \cite{GT}, \cite{Pre3} and \cite{Pre2} for more details.

\subsection{}
Let us turn to the case over the field $\bbc$ for a while. It is well-known that the center of $U(\ggg_\bbc)$ is equal to $U(\ggg_\bbc)^{G_\bbc}$, the algebra of $G_\bbc$-invariants for the adjoint action in $U(\ggg_\bbc)$. In the footnote of \cite[Question 5.1]{P3}, Premet showed that the center of finite $W$-algebra $U(\ggg_\bbc,\hat e)$ coincides with the image of $U(\ggg_\bbc)^{G_\bbc}$ in $U(\ggg_\bbc,\hat e)$ under the map $\varphi_\bbc$, where $\varphi_\bbc$ is of the same meaning as $\varphi$ in \S\ref{zpcenter}.

We go back to the case of  positive characteristic.  Denote by $r$ the rank of $\ggg$. Under the assumption that $p$ does not divide the order of the Weyl group associated to a given root system of $\ggg$, by Veldkamp's theorem \cite{Ve} we know that $Z(\ggg)$ is generated by the $p$-center $Z_0(\ggg)$ and the Harish-Chandra center $Z_1(\ggg):=U(\ggg)^G$. Moreover, there exist algebraically independent generators $g_1,\cdots,g_r$ of $U(\ggg)^G$ such that $Z(\ggg)$ is a free $Z_0(\ggg)$-module of rank $p^r$ with a basis consisting of all $g_1^{t_1}\cdots g_r^{t_r}$ with $0\leqslant t_k\leqslant p-1$ for all $k$. 

Now we consider the finite $W$-algebra $\ct(\ggg,e)$  in positive characteristic $p$. Let $Z(\ct)$ be the center of  $\ct(\ggg,e)$, and denote by $\Lambda_k:=\{(i_1,\cdots,i_k)\mid i_j\in\{0,1,\cdots,p-1\}\}$
for $k\in\mathbb{Z}_+$ with $1\leqslant j\leqslant k$. Write $Z_0(\ct):=Z_0(\tilde{\mathfrak{p}})^{\mathcal{M}}$, which is the $p$-center of $\ct(\ggg,e)$ as mentioned above. Note that the image of $Z_1(\ggg)$ under the map $\varphi$ in \S\ref{zpcenter} lies in $\ct(\ggg,e)$, and we denote it by $Z_1(\ct)$. Set $f_i:=\varphi(g_i)$ with $1\leqslant i\leqslant r$, which are all in $Z_1(\ct)$.
As an analogue of the Veldkamp's theorem in the Lie algebra case, our first main result comes as follows:
\begin{theorem}\label{central thm} Under the assumption $p\gg 0$
for ${\bbk}=\overline{\mathbb{F}}_p$, 
we have the following results.
\begin{itemize}
\item[(1)] The $\mathds{k}$-algebra $Z(\ct)$ is generated by  $Z_0(\ct)$ and $Z_1(\ct)$. More precisely, $Z(\ct)$ is a free module of rank $p^r$ over $Z_0(\ct)$ with a basis $f_1^{t_1}\cdots f_r^{t_r}$, where $(t_1,\cdots,t_r)$ runs through $\Lambda_r$.
\item[(2)] The multiplication map $\mu:~Z_0(\ct)\otimes_{Z_0(\ct)\cap Z_1(\ct)} Z_1(\ct)\rightarrow Z(\ct)$ is an isomorphism of
$\mathds{k}$-algebras.
\end{itemize}
\end{theorem}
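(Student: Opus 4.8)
Here is the strategy I would follow.

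The plan is to transplant Veldkamp's argument for $Z(\ggg)$ to the finite $W$-algebra: the homomorphism $\varphi$ will carry the Harish-Chandra part $Z_1(\ggg)=U(\ggg)^G$ of $Z(\ggg)$ into $Z(\ct)$, while the Kazhdan filtration together with the Slodowy-slice geometry of \cite{Pre1} will control the $p$-centre part. I shall use freely, under the stated hypotheses: the PBW-type presentations giving a graded Poisson isomorphism $\Gr\ct(\ggg,e)\cong\bbk[\cs]$ (a polynomial ring, hence a domain), under which $\Gr Z_0(\ct)$ is the subring $\bbk[\cs]^{(1)}$ of $p$-th powers; Veldkamp's theorem for $U(\ggg)$, including the relations $g_i^p=\sum_{(t)\in\Lambda_r}c^{(i)}_{(t)}\,g_1^{t_1}\cdots g_r^{t_r}$ with $c^{(i)}_{(t)}\in Z_p(\ggg)$ and the decomposition $U(\ggg)^G=\bigoplus_{(t)\in\Lambda_r}C_\ggg\,g_1^{t_1}\cdots g_r^{t_r}$ with $C_\ggg:=Z_p(\ggg)\cap U(\ggg)^G$; and the fact that the homogeneous generators $\bar g_1,\ldots,\bar g_r$ of $S(\ggg)^G$ restrict to algebraically independent elements $\sigma_i:=\bar g_i|_\cs$ of $\bbk[\cs]$ which, together with $\bbk[\cs]^{(1)}$, generate its Poisson centre.

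First, each $f_i=\varphi(g_i)$ lies in $Z(\ct)$: since $g_i$ is central in $U(\ggg)$ and $Q_\chi=U(\ggg)\cdot1_\chi$, the operator $\varphi(g_i)$ commutes with every $\ggg$-endomorphism of $Q_\chi$, so $f_i\in Z(U(\ggg,e))$, and since $g_i$ is $G$-invariant, $f_i$ is fixed by $\Ad\,\mathcal M$, hence $f_i\in\ct(\ggg,e)=U(\ggg,e)^{\mathcal M}$ and so $f_i\in Z(\ct)$. Under $\Gr\ct(\ggg,e)\cong\bbk[\cs]$ one has $\Gr f_i=\sigma_i$; as the $\sigma_i$ are algebraically independent and $\sigma_i^p\in\bbk[\cs]^{(1)}=\Gr Z_0(\ct)$, the $p^r$ monomials $\sigma_1^{t_1}\cdots\sigma_r^{t_r}$, $(t)\in\Lambda_r$, are linearly independent over $\bbk[\cs]^{(1)}$. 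A leading-term argument along the Kazhdan filtration then shows that $\{f_1^{t_1}\cdots f_r^{t_r}:(t)\in\Lambda_r\}$ is linearly independent over $Z_0(\ct)$ in $\ct(\ggg,e)$, and likewise over $Z_p(\tilde{\mathfrak p})$ in $U(\ggg,e)$.

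Next I identify the subalgebra generated by $Z_0(\ct)$ and $Z_1(\ct)$ and prove part (1). Applying $\varphi$ to Veldkamp's relations gives $f_i^p=\sum_{(t)\in\Lambda_r}\varphi(c^{(i)}_{(t)})\,f_1^{t_1}\cdots f_r^{t_r}$ with $\varphi(c^{(i)}_{(t)})\in\varphi(Z_p(\ggg))=Z_p(\tilde{\mathfrak p})$; applying $\Ad\,\mathcal M$ and using that both sides and the monomials are $\mathcal M$-invariant, together with their independence over $Z_p(\tilde{\mathfrak p})$, forces $\varphi(c^{(i)}_{(t)})\in Z_p(\tilde{\mathfrak p})^{\mathcal M}=Z_0(\ct)$. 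Hence $A:=Z_0(\ct)[f_1,\ldots,f_r]$ — which is exactly the subalgebra generated by $Z_0(\ct)$ and $Z_1(\ct)=\varphi(U(\ggg)^G)=\bbk[f_1,\ldots,f_r]$ — equals $\bigoplus_{(t)\in\Lambda_r}Z_0(\ct)\,f_1^{t_1}\cdots f_r^{t_r}$, a free $Z_0(\ct)$-module of rank $p^r$ with $\Gr A=\bbk[\cs]^{(1)}[\sigma_1,\ldots,\sigma_r]$. It remains to show $Z(\ct)\subseteq A$. For $z\in Z(\ct)$ the principal symbol $\bar z\in\bbk[\cs]$ satisfies $\{\bar z,-\}=0$, so it lies in the Poisson centre of $\bbk[\cs]$; by the cited input this equals $\bbk[\cs]^{(1)}[\sigma_1,\ldots,\sigma_r]=\Gr A$. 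Choosing $z'\in A$ with $\Gr z'=\bar z$, the difference $z-z'\in Z(\ct)$ has strictly smaller Kazhdan degree, and downward induction on the degree yields $z\in A$. Thus $Z(\ct)=A=\bigoplus_{(t)\in\Lambda_r}Z_0(\ct)\,f_1^{t_1}\cdots f_r^{t_r}$, which is part (1).

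For part (2), set $C:=Z_0(\ct)\cap Z_1(\ct)$. Applying $\varphi$ to $U(\ggg)^G=\bigoplus_{(t)\in\Lambda_r}C_\ggg\,g_1^{t_1}\cdots g_r^{t_r}$ gives $Z_1(\ct)=\sum_{(t)\in\Lambda_r}\varphi(C_\ggg)\,f_1^{t_1}\cdots f_r^{t_r}$; since $\varphi(C_\ggg)\subseteq Z_p(\tilde{\mathfrak p})\cap Z_1(\ct)$ and $Z_1(\ct)\subseteq U(\ggg,e)^{\mathcal M}$, one gets $\varphi(C_\ggg)\subseteq Z_p(\tilde{\mathfrak p})^{\mathcal M}\cap Z_1(\ct)=C$, whence $Z_1(\ct)=\sum_{(t)\in\Lambda_r}C\,f_1^{t_1}\cdots f_r^{t_r}$. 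Therefore every element of $Z_0(\ct)\otimes_C Z_1(\ct)$ can be written $\sum_{(t)\in\Lambda_r}a_{(t)}\otimes(f_1^{t_1}\cdots f_r^{t_r})$ with $a_{(t)}\in Z_0(\ct)$, and $\mu$ sends it to $\sum_{(t)}a_{(t)}f_1^{t_1}\cdots f_r^{t_r}$; by part (1) this vanishes only when all $a_{(t)}=0$, so $\mu$ is injective, and it is surjective because $Z(\ct)$ is generated by $Z_0(\ct)$ and $Z_1(\ct)$. Hence $\mu$ is a $\bbk$-algebra isomorphism. I expect the principal difficulty to be the positive-characteristic Poisson-centre computation for $\bbk[\cs]$ invoked for $Z(\ct)\subseteq A$: deducing it from the characteristic-$0$ statement, from the fact that the kernel of the Poisson bivector of $\cs$ is generically spanned by the $d\sigma_i$ (transversality of the slice, \cite{Pre1}), and from the normality of $\bbk[\cs]^{(1)}[\sigma_1,\ldots,\sigma_r]$ (which follows since the critical locus of $(\sigma_1,\ldots,\sigma_r)\colon\cs\to\mathbb A^r$ has codimension $\ge 2$ in the smooth variety $\cs$), via the differential criterion characterizing the purely inseparable subfield cut out by the $d\sigma_i$. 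The $\mathcal M$-invariant descent steps and the identification $\Gr Z_0(\ct)=\bbk[\cs]^{(1)}$ are comparatively routine, but do rely on Premet's explicit PBW presentations \cite{Pre2}.
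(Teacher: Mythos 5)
Your handling of the module-theoretic part---showing $f_i\in Z(\ct)$, forcing the coefficients $\varphi(c^{(i)}_{(t)})$ into $Z_0(\ct)$ by $\Ad\,\mathcal M$-semilinearity, and deducing part (2) from part (1)---runs parallel to Lemmas \ref{Har Chand Cent}, \ref{Veldkamp mod} and \ref{mod equal alg}. For the hard inclusion $Z(\ct)\subseteq A$ (your $A$ is the paper's $\tilde\cz$) you take a genuinely different route: the paper (\S\ref{pro}) first proves $\Frac(\tilde\cz)=\Frac(Z(\ct))$ by a degree count, $[\Frac(\tilde\cz):\bbf_0]=p^r$ against $[\bbf:\bbf_0]\leqslant p^r$, the latter resting on $[Q(\ct):\bbf]\geqslant p^{\ell}$ from Proposition \ref{centersimple}(4) (baby Verma modules plus the Morita equivalence), and then gets $\gr(\tilde\cz)=\gr(Z(\ct))$ from integrality once $\gr(\tilde\cz)$ is shown normal (complete intersection, Lemma \ref{complete int}, Serre's criterion, Proposition \ref{Regular pts}); you instead propose a Poisson-centre computation on $\gr\ct(\ggg,e)\cong\bbk[\cs]$ and a downward induction on Kazhdan degree, which would bypass the fraction-field count entirely. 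Note, however, that both routes funnel through the same geometric inputs: codimension $\geqslant 2$ of the non-regular locus in $\cs$ and normality of $\bbk[\cs]^{(1)}[\sigma_1,\dots,\sigma_r]$.

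Two steps of your proposal need repair. First, the independence step: algebraic independence of $\sigma_1,\dots,\sigma_r$ does \emph{not} imply that the monomials $\sigma_1^{t_1}\cdots\sigma_r^{t_r}$, $(t)\in\Lambda_r$, are independent over $\bbk[\cs]^{(1)}$ (take $\sigma_1=u_1$, $\sigma_2=u_1+u_2^p$: algebraically independent, yet $\sigma_2-\sigma_1-u_2^p=0$ is a relation over the $p$-th powers). What is needed is $p$-independence, i.e.\ generic linear independence of $d\sigma_1,\dots,d\sigma_r$ on $\cs$, which amounts to the existence of regular points of $\ggg$ in $\cs$---the very fact you invoke later for the bivector kernel---so this is fixable but must be said; the paper instead proves independence by specializing at all $\eta\in\chi+\mmm^\perp$ and using the reduced $W$-algebras (Lemma \ref{forthcoming}). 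Second, and more seriously, the assertion that the Poisson centre of $\bbk[\cs]$ equals $\bbk[\cs]^{(1)}[\sigma_1,\dots,\sigma_r]$ is not an available citation: it is essentially the associated-graded form of the theorem being proved, and your closing sketch leaves real holes. (a) You must identify the bracket induced on $\gr\ct(\ggg,e)$ by the Kazhdan filtration with the natural Poisson structure of the slice and establish that its generic rank is $\sfd-r$ under (H1)+(H4) or (H2)+(H5), not merely for $p\gg0$; only the fact that the $\sigma_i=\gr f_i$ are Casimirs comes for free. (b) Normality of $\bbk[\cs]^{(1)}[\sigma_1,\dots,\sigma_r]$ does not follow from the codimension-$2$ bound alone: that bound gives (R1), and (S2) must be supplied---as in Lemma \ref{complete int}, by the complete-intersection presentation, or by finite flatness over $\bbk[\cs]^{(1)}$---while the codimension-$2$ bound itself is Proposition \ref{Regular pts}, a substantive result proved via normality of the fibres of $\varpi_\cs$ (Theorem \ref{premet}), not something to assert in passing. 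With these ingredients supplied, your differential criterion (the exponent-one purely inseparable correspondence) combined with integrality over the normal ring $\gr A$ does close the induction; so the plan is workable, but as written it is incomplete precisely at its central step.
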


The proof of the above theorem is dependent on Proposition \ref{Regular pts} involving the normality of the fibers of the restriction to 
Slodowy slices of the adjoint quotient map.  {The proof will be  given in \S\ref{pro} and \S\ref{pro2}.

\subsection{}
We continue to consider the case over $\bbk$.
Let $\Specm(Z(\ct))$ be the spectrum of maximal ideals of $Z(\ct)$. The corresponding Azumaya locus can be defined as \begin{equation*}
\mathcal{A}(\ct(\ggg,e)):=\{\mathbf{m}\in \Specm(Z(\ct))\mid \ct(\ggg,e)_{\mathbf{m}} \mbox{ is Azumaya over }Z(\ct)_{\mathbf{m}}\},
\end{equation*}
where $\ct(\ggg,e)_{\mathbf{m}}$ and $Z(\ct)_{\mathbf{m}}$ denote the localizations of $\ct(\ggg,e)$ and $Z(\ct)$ at $\mathbf{m}$, respectively. Let $V$ be an irreducible $\ct(\ggg,e)$-module, and consider the corresponding central character 
$\zeta_V : Z(\ct) \rightarrow \bbk$.

As we see in \S\ref{ring theory property} and \S\ref{for fintecen}, $\ct(\ggg,e)$ is a prime Noetherian algebra, module-finite over its affine center, then by \cite[\S3]{BGl} the  Azumaya locus can be described in the representation theoretical meanings as:
$$\mathcal{A}(\ct(\ggg,e))
 =\{\ker(\zeta_V)|\dim V=\mbox{the maximal dimension of all irreducible}~ \ct(\ggg,e)\mbox{-modules}\}.$$
The following second main result shows the coincidence of the Azumaya locus with the smooth locus of $\Specm({Z(\ct)})$.

\begin{theorem}\label{Azumaya Thm} 
Let $G_\bbc$ be a simple, simply connected algebraic group over $\bbc$, and $\ggg=\Lie(G)$ for the counterpart $G$ of $G_\bbc$ over $\bbk=\overline{\mathbb{F}}_p$ for $p\gg 0$.  
Then the locus of points in $\Specm(Z(\ct))$ that occur as $Z(\ct)$-annihilators of irreducible $\ct(\ggg,e)$-module of maximal dimension coincides with the smooth locus consisting of smooth  points in $\Specm(Z(\ct))$.
\end{theorem}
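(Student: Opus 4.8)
\textbf{Proof proposal for Theorem \ref{Azumaya Thm}.}
The plan is to deduce the coincidence of the Azumaya locus and the smooth locus from the abstract machinery of Brown--Goodearl \cite{BGl}, exactly as in the classical case of the reduced enveloping algebra, once the required homological and geometric hypotheses on $\ct(\ggg,e)$ and on $\Specm(Z(\ct))$ are verified. First I would record the standing facts already established: $\ct(\ggg,e)$ is a prime Noetherian $\bbk$-algebra which is module-finite over its affine center $Z(\ct)$, and by Theorem \ref{central thm} the center $Z(\ct)$ is a finitely generated commutative $\bbk$-algebra, free of rank $p^r$ over the $p$-center $Z_0(\ct)$. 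The general theorem of Brown--Goodearl (see \cite[Theorem 3.8]{BGl} and its corollaries) states that for a prime Noetherian algebra $R$, module-finite over its center, which is moreover Auslander-regular and Cohen--Macaulay, the Azumaya locus of $\Specm(Z(R))$ coincides with its smooth locus. So the proof reduces to two verifications: (i) $\ct(\ggg,e)$ is Auslander-regular and Cohen--Macaulay, and (ii) $\Specm(Z(\ct))$ is equidimensional, so that ``smooth'' is equivalent to ``regular local'' pointwise and the Brown--Goodearl criterion applies verbatim.

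For step (i), I would exploit the good filtration on $\ct(\ggg,e)$ coming from the Kazhdan grading on the finite $W$-algebra, whose associated graded ring is (a polynomial ring tensored with) the coordinate ring of the Slodowy slice $\cs = \hat e + \text{Ker}(\ad f)$, reduced mod $p$; more precisely, under the hypotheses (H1)--(H5) the associated graded of $\ct(\ggg,e)$ is isomorphic to $\bbk[\cs^{(1)}] \otimes \gr$-pieces, which is a commutative regular (indeed smooth affine) ring because the Slodowy slice is smooth. Since Auslander-regularity and the Cohen--Macaulay (GKdim-Macaulay) property pass from $\gr R$ to $R$ for a filtered algebra with Noetherian associated graded (see \cite[Theorem III.2.1 and its consequences]{BGl} or the standard lifting lemmas), and a commutative regular affine $\bbk$-algebra is trivially Auslander-regular and Cohen--Macaulay, we conclude that $\ct(\ggg,e)$ itself has both properties. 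This is the step where one must be slightly careful about which transverse slice is being used: under (H1)--(H3) the Slodowy slice is replaced by a good transverse slice (cf.\ Remark \ref{remnormal}), but smoothness of that slice is exactly what is needed and is available.

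For step (ii), equidimensionality of $\Specm(Z(\ct))$, I would argue via the finite flat extension $Z_0(\ct) \hookrightarrow Z(\ct)$ of Theorem \ref{central thm}(1): $Z(\ct)$ is free of rank $p^r$ over $Z_0(\ct) = Z_p(\tilde{\mathfrak{p}})^{\mathcal{M}} \cong \bbk[(\chi + \kappa(\text{Ker}(\ad f)))^{(1)}]$, which is the coordinate ring of (the Frobenius twist of) the Slodowy slice, hence a polynomial ring, hence integral and equidimensional of dimension $\dim \cs = \dim \ggg_e$. A finite free extension of an equidimensional affine domain is equidimensional of the same dimension, so $\Specm(Z(\ct))$ is equidimensional of dimension $\dim \ggg_e$. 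With equidimensionality in hand, the smooth locus of $\Specm(Z(\ct))$ is precisely the set of points where the local ring is regular, and Brown--Goodearl's dichotomy applies. Finally I would translate the conclusion into the representation-theoretic statement: since $\ct(\ggg,e)$ is prime and module-finite over $Z(\ct)$, a point $\mathbf{m} \in \Specm(Z(\ct))$ lies in the Azumaya locus if and only if the (unique up to iso) irreducible module with central character $\mathbf{m}$ has dimension equal to the PI-degree of $\ct(\ggg,e)$, which is the maximal dimension of irreducible $\ct(\ggg,e)$-modules; this is the displayed description of $\mathcal{A}(\ct(\ggg,e))$ recalled before the theorem.

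The main obstacle I expect is step (i), specifically pinning down the associated graded ring of $\ct(\ggg,e)$ under reduction mod $p$ and confirming it is regular: one needs the modular PBW theorem for $\ct(\ggg,e)$ (from Premet's construction, valid for $p$ in the admissible set) to identify $\gr \ct(\ggg,e)$ with the coordinate ring of the reduced Slodowy slice (times a polynomial factor accounting for the $p$-center direction), and then invoke smoothness of that slice — which holds because the transversal slice construction is smooth over the base and reduction mod $p$ preserves smoothness for $p \gg 0$. Once $\gr \ct(\ggg,e)$ is seen to be a smooth commutative affine algebra, everything else is a formal application of \cite{BGl}, so the geometric input about Slodowy slices (Proposition \ref{Regular pts} and the normality/smoothness statements it rests on) is really the crux, just as it was for Theorems \ref{central thm} and \ref{key2}.
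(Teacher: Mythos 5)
There is a genuine gap, and it sits exactly where you declare the proof ``formal.'' The Brown--Goodearl theorem you invoke does \emph{not} say that for a prime Noetherian algebra, module-finite over its center, Auslander-regular and Macaulay, the Azumaya locus automatically equals the smooth locus. The version used in the paper (Theorem \ref{BGl 3.8}) carries an additional, essential hypothesis: $A_\ppp$ must be Azumaya over $Z(A)_\ppp$ for every height one prime $\ppp$ of $Z(A)$, i.e.\ the complement of the Azumaya locus in $\Specm(Z(A))$ must have codimension at least $2$. Your proposal never verifies this for $\ct(\ggg,e)$, and verifying it is the substantive content of the paper's proof, not a corollary of your steps (i) and (ii). Concretely, the paper identifies $\Specm(Z_0(\ct))$ with $\Specm(\bbk[\cs^{(1)}])$, uses the finite dominant morphism $\Xi:\Specm(Z(\ct))\rightarrow\Specm(Z_0(\ct))$ coming from $Z_0(\ct)\hookrightarrow Z(\ct)$, and then shows that every point of $\Xi^{-1}(X)$, with $X$ the set of maximal ideals attached to $s\in\cs_\reg$, lies in the Azumaya locus: this uses the Morita equivalence (Theorem \ref{Morita}) together with the isomorphism $\ct_\eta(\ggg,e)\cong U_\eta(\ggg,e)$ and the fact (from Brown--Gordon) that irreducible $U_\eta(\ggg)$-modules with \emph{regular} $p$-character $\eta$ have the maximal possible dimension, transported through the $\mathcal{M}$-action isomorphism $\mathcal{M}\times\cs\cong\chi+\mmm^\perp$. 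Finally, Proposition \ref{Regular pts} (the complement of $\cs_\reg$ in $\cs$ has codimension at least $2$, proved via normality of the fibers of $\varpi_\cs$ in Theorem \ref{premet}) plus finiteness of $\Xi$ gives the required codimension bound. None of this is in your proposal; the representation-theoretic bridge from regular points of the slice to maximal-dimensional irreducibles is the step you cannot skip.

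Two smaller corrections. Your step (ii) (equidimensionality of $\Specm(Z(\ct))$) is not a hypothesis of the theorem being applied and does not substitute for the missing height-one verification. And the ``main obstacle'' you identify in step (i) is in fact the routine part: under the Kazhdan filtration $\gr(\ct(\ggg,e))\cong S(\ggg_e)$ is a polynomial ring, so Auslander-regularity and the Macaulay property lift by standard Zariskian-filtration arguments (this is the paper's Proposition \ref{Auslander Cond}); smoothness of the slice itself is immediate since it is an affine space, whereas what actually matters is the codimension of the \emph{non-regular} locus inside the slice and the normality of the fibers $\varpi_\cs^{-1}(\xi)$, which is where the hypotheses (H1)+(H4) or (H2)+(H5) enter.
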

The proof of the above theorem will be given in \S\ref{azumaya proof}.
This result reveals a close connection between the geometry of the centers and representations of the maximal dimensions for modular finite $W$-algebras.  Brown-Goodearl \cite{BGl} listed three major examples of  algebras: quantized enveloping algebras,  quantized function algebras at a root of unity, as well as enveloping algebras of reductive Lie algebras in positive characteristic, for which one can parameterize  irreducible representations of the maximal dimensions via the smooth points in the spectrum of maximal ideals of the centers. By Theorem \ref{Azumaya Thm}, modular finite $W$-algebras provide a new example for Brown-Goodearl's list.

\subsection{} The paper is organized as follows. In \S\ref{On the finite} some basics on complex  and modular finite $W$-algebras are recalled.  We also introduce some new observations in \S\ref{ring theory property} and \S\ref{2.5.2}. In \S\ref{fractional  of} we mainly investigate the fractional ring $Q(\ct)$ of $\ct(\ggg,e)$ over the fractional field $\Frac(Z(\ct))$. In \S\ref{On the centers of}, we investigate the centers of $\ct(\ggg,e)$, making some necessary preparation for the proof of Theorem \ref{central thm}. The complete proof of Theorem \ref{central thm} will be postponed till \S\ref{pro}  because it is dependent on Proposition \ref{Regular pts}.

 \S\ref{Azumaya locus} is devoted to the proof of Theorem \ref{Azumaya Thm}. We will first recall some material on Azumaya algebras,  then show that $\ct(\ggg,e)$ is Auslander-regular and Macaulay, with Krull and global dimension less than dim\,$\ggg_e$ (see  Proposition \ref{Auslander Cond}). This enables us to apply Brown-Goodearl's theorem (see Theorem \ref{BGl 3.8}) to our case. According to  Slodowy's  earlier results on transverse slices to nilpotent orbits (see Theorem \ref{premet}
), a main property which we need to establish is on the complement of the smooth locus in the Slodowy slice, which is given in Proposition \ref{Regular pts}. 
In \S\ref{azumaya proof}, we finally show that the codimension of
the complement of Azumaya locus in $\Specm(Z(\ct))$ is at least 2. Applying Brown-Goodearl's theorem to our case, we deduce Theorem \ref{Azumaya Thm}.

 \S\ref{pf for thm 1.1} is devoted to the proof of Theorem \ref{central thm}.  We will first show that $\Specm(\text{gr}(\tilde\cz))$ is a (strict) complete intersection, where $\tilde\cz$ denotes the subalgebra of $Z(\ct)$ generated by $Z_0(\ct)$ and $Z_1(\ct)$, and $\text{gr}(\tilde{\cz})$ the graded algebra of $\tilde{\cz}$ under the Kazhdan grading. Applying Proposition \ref{Regular pts}, we further prove that it is a normal variety, and that $\tilde\cz$ coincides with $Z(\ct)$.


\subsection{}
Throughout the paper we work with the field of complex numbers ${\bbc}$ and the field ${\bbk}=\overline{\mathbb{F}}_p$ for $p\gg 0$. 
The assumption arises from Premet's original arguments on the admissible procedure with $p\in\Pi(A)$ for some admissible ring $A$ when we work with $\bbk$ (see \S\ref{211}; or \cite{GG, GT, Pre3, Pre2}  for more details).

Let ${\bbz}_+$ be the set of all the non-negative integers in ${\bbz}$. For $k\in\mathbb{Z}_+$, define
\begin{equation*}
 \begin{array}{llllll}
 \mathbb{Z}_+^k&:=&\{(i_1,\cdots,i_k)\mid i_j\in\mathbb{Z}_+\},&
 \Lambda_k&:=&\{(i_1,\cdots,i_k)\mid i_j\in\{0,1,\cdots,p-1\}\}
 \end{array}
 \end{equation*}with $1\leqslant j\leqslant k$.
For any $\bbk$-vector space $V$, the dimension of $V$ over $\bbk$ is denoted by $\dim V$.

\section{Finite $W$-algebras}\label{On the finite}
We will maintain  the notations and assumptions  in the introduction.
\subsection{Finite $W$-algebras over the field of complex numbers}
First we recall some facts on finite $W$-algebras over $\mathbb{C}$.

\subsubsection{}\label{211}
Let $T_\mathbb{C}$ be a maximal torus in $G_\mathbb{C}$ and set $\mathfrak{h}_\mathbb{C}=\text{Lie}(T_\mathbb{C})$. Then $\mathfrak{h}_\mathbb{C}$ is a Cartan subalgebra of ${\ggg}_\mathbb{C}$. Let $\Phi$ be the root system of ${\ggg}_\mathbb{C}$ relative to $\mathfrak{h}_\mathbb{C}$. Choose a basis of simple roots $\Delta$ in $\Phi$.
Denote by $\Phi^+$ the corresponding positive system in $\Phi$, and put $\Phi^-:=-\Phi^+$. Let ${\ggg}_\mathbb{C}=\mathfrak{n}^-_\mathbb{C}\oplus\mathfrak{h}_\mathbb{C}\oplus\mathfrak{n}^+_\mathbb{C}$ be the corresponding triangular decomposition of ${\ggg}_\mathbb{C}$.
Choose a Chevalley basis $B=\{\hat e_\gamma\mid\gamma\in\Phi\}\cup\{\hat h_\alpha\mid\alpha\in\Delta\}$ of ${\ggg}_\mathbb{C}$. Let ${\ggg}_{\bbz}$ denote the Chevalley ${\bbz}$-form in ${\ggg}_\mathbb{C}$ and $U_{\bbz}$ the Kostant ${\bbz}$-form of $U({\ggg}_\mathbb{C})$ associated with $B$. Given a ${\bbz}$-module $V$ and a ${\bbz}$-algebra $A$, we write $V_A:=V\otimes_{\bbz}A$.

By the Dynkin-Kostant theory, 
for any given nilpotent element $\hat e\in{\ggg}_{\bbz}$ we can find $\hat f,\hat h\in{\ggg}_\mathbb{Q}$ such that $(\hat e,\hat h,\hat f)$ is an $\mathfrak{sl}_2$-triple in ${\ggg}_\mathbb{C}$. 
Fix $(\cdot,\cdot)$   a scalar multiple of the Killing form of $\ggg_\mathbb{C}$ such that  the Chevalley basis $B$ of ${\ggg}_\mathbb{C}$ takes value in $\mathbb{Q}$, and $(\hat e,\hat f)=1$. Define $\chi\in\ggg_\mathbb{C}^*$ by letting $\chi(\hat x)=(\hat e,\hat x)$ for all $\hat x\in{\ggg}_\mathbb{C}$.

As defined by Premet in \cite[Definition 2.1]{Pre3}, we can choose an {\sl admissible} ring $A$, which is a finitely-generated ${\bbz}$-subalgebra of ${\bbc}$ such that $(\hat e,\hat f)\in A^{\times}$ 
and all bad primes of the root system of ${\ggg}_\mathbb{C}$ and the determinant of the Gram matrix of ($\cdot,\cdot$) relative to a Chevalley basis of ${\ggg}_\mathbb{C}$ are invertible in $A$. 
Denote by $\text{Specm}(A)$ the maximal spectrum of $A$, then for every element $\mathfrak{P}\in\text{Specm}(A)$, the residue field $A/\mathfrak{P}$ is isomorphic to $\mathbb{F}_{q}$, where $q$ is a $p$-power depending on $\mathfrak{P}$. Denote by $\Pi(A)$ the set of all primes $p\in\mathbb{Z}_+$ that occur in this way. By \cite[Lemma 4.4]{Pre3} and its proof, the set $\Pi(A)$ contains almost all primes in $\mathbb{Z}_+$. Denote by ${\ggg}_A$ the $A$-submodule of ${\ggg}$ generated by the Chevalley basis $B$.

Let ${\ggg}_\mathbb{C}(i)=\{\hat x\in{\ggg}_\mathbb{C}\mid[\hat h,\hat x]=i\hat x\}$. Then ${\ggg}_\mathbb{C}=\bigoplus_{i\in{\bbz}}{\ggg}_\mathbb{C}(i)$. By the $\mathfrak{sl}_2$-theory, all subspaces ${\ggg}_\mathbb{C}(i)$ are defined over $\mathbb{Q}$. Also, $\hat e\in{\ggg}_\mathbb{C}(2)$, $\hat f\in{\ggg}_\mathbb{C}(-2)$ and $\hat h\in\mathfrak{h}_\mathbb{C}\subseteq{\ggg}_\mathbb{C}(0)$. There exists a symplectic  bilinear form $\langle\cdot,\cdot\rangle$ on ${\ggg}_\mathbb{C}(-1)$  given by $\langle \hat x,\hat y\rangle:=(\hat e,[\hat x,\hat y])=\chi([\hat x,\hat y])$ for all $\hat x,\hat y\in{\ggg}_\mathbb{C}(-1)$.
We can choose a basis $\{\hat z_1,\cdots,\hat z_{2s}\}$ of ${\ggg}(-1)$ contained in ${\ggg}_\mathbb{Q}$ such that $\langle \hat z_i, \hat z_j\rangle =i^*\delta_{i+j,2s+1}$ for $1\leqslant i,j\leqslant 2s$, where $i^*=\left\{\begin{array}{ll}-1,&\text{if}~1\leqslant i\leqslant s;\\ 1,&\text{if}~s+1\leqslant i\leqslant 2s.\end{array}\right.$
Denote by ${\mathfrak{l}}_\mathbb{C}$ the ${\bbc}$-span of $\hat z_{1},\cdots,\hat z_{s}$ and $\mathfrak{l}'_\mathbb{C}$ the ${\bbc}$-span of $\hat z_{s+1},\cdots,\hat z_{2s}$. 
Set ${\ppp}_\mathbb{C}:=\bigoplus_{i\geqslant 0}{\ggg}_\mathbb{C}(i)$, $\tilde{\ppp}_\mathbb{C}:={\ppp}_\mathbb{C}\oplus{\mathfrak{l}}_\mathbb{C}$ and $\mathfrak{m}_\mathbb{C}:=\bigoplus_{i\leqslant -2}{\ggg}_\mathbb{C}(i)\oplus{\mathfrak{l}}_\mathbb{C}'$. Then $\chi$ vanishes on the derived subalgebra of $\mathfrak{m}_\mathbb{C}$, and ${\ggg}_\mathbb{C}=\tilde{\ppp}_\mathbb{C}\oplus\mathfrak{m}_\mathbb{C}$ as vector spaces.

Write $({\ggg}_\mathbb{C})_{\hat e}$ for the centralizer of $\hat e$ in ${\ggg}_\mathbb{C}$, and denote by $\text{Ker}(\ad\,{\hat f})$ the centralizer of $\hat f$ in $\ggg_\bbc$.
Denote by $\sfd:=\dim({\ggg}_\mathbb{C})_{\hat e}$.
As $\dim{\ggg}_\bbc-\dim({\ggg}_\mathbb{C})_{\hat e}=\sum_{k\geqslant2}
2\dim {\ggg}_\mathbb{C}(-k)+\dim{\ggg}_\mathbb{C}(-1)$, we have  $\dim\mathfrak{m}_\mathbb{C}=\frac{\dim{\ggg}_\bbc-\dim({\ggg}_\mathbb{C})_{\hat e}}{2}=\frac{\dim G_\bbc.\hat e}{2}$.
After enlarging $A$ one can assume that ${\ggg}_A=\bigoplus_{i\in{\bbz}}{\ggg}_A(i)$, and each ${\ggg}_A(i):={\ggg}_A\cap{\ggg}_\mathbb{C}(i)$ is freely generated over $A$ by a basis of the vector space ${\ggg}_\bbc(i)$. 
Then we can assume that $\mathfrak{h}_A:={\ggg}_A\cap\mathfrak{h}_\mathbb{C}$, $\mathfrak{n}^-_A:={\ggg}_A\cap\mathfrak{n}^-_\mathbb{C}$,
$\mathfrak{n}^+_A:={\ggg}_A\cap\mathfrak{n}^+_\mathbb{C}$, $\mathfrak{l}_A:={\ggg}_A\cap{\mathfrak{l}}_\mathbb{C}$, ${\ppp}_A:={\ggg}_A\cap{\ppp}_\mathbb{C}$, $\tilde{\ppp}_A:={\ggg}_A\cap\tilde{\ppp}_\mathbb{C}$,
$\mathfrak{m}_A:={\ggg}_A\cap\mathfrak{m}_\mathbb{C}$ and $({\ggg}_A)_{\hat e}:={\ggg}_A\cap({\ggg}_\mathbb{C})_{\hat e}$ are free $A$-modules and direct summands of ${\ggg}_A$. Moreover, one can assume $\hat e,\hat f\in{\ggg}_A$ after  possibly enlarging $A$, $[\hat e,{\ggg}_A(i)]$ and $[\hat f,{\ggg}_A(i)]$ are direct summands of ${\ggg}_A(i+2)$ and ${\ggg}_A(i-2)$ respectively, and ${\ggg}_A(i+2)=[\hat e,{\ggg}_A(i)]$ for each $i\geqslant -1$ by the $\mathfrak{sl}_2$-theory.

As in \cite[\S4.2-\S4.3]{Pre1}, we can choose a basis $\hat x_1,\cdots,\hat x_\sfd,\hat x_{\sfd+1},\cdots,\hat x_m\in{\ppp}_A$
of the free $A$-module ${\ppp}_A=\bigoplus_{i\geqslant 0}{\ggg}_A(i)$ such that

(a) $\hat x_i\in{\ggg}_A(n_i)$, where $n_i\in{\bbz}_+$ with $1\leqslant i\leqslant m$;

(b) $\hat x_1,\cdots,\hat x_\sfd$ is a basis of the $A$-module $({\ggg}_A)_{\hat e}$;

(c) $\hat x_{\sfd+1},\cdots,\hat x_m\in[\hat f,{\ggg}_A]$.
%
%
%
\subsubsection{}\label{212}
Recall the finite $W$-algebra $U(\ggg_\bbc,\hat e)$ over $\bbc$ is by definition,  equal to $(\End_{\ggg_\mathbb{C}}(Q_\mathbb{C})_\chi)^{\op}$.
Let $(I_\bbc)_\chi$ denote the ideal in $U({\ggg}_\bbc)$ generated by all $\hat x-\chi(\hat x)$ with $\hat x\in\mathfrak{m}_\bbc$. 
Then $ U({\ggg}_\bbc)/(I_\bbc)_\chi\cong(Q_\bbc)_\chi$ as ${\ggg}_\bbc$-modules via the ${\ggg}_\bbc$-module map sending $\hat 1+(I_\bbc)_\chi$ to $\hat1_\chi$.  
Since $(I_\bbc)_\chi$ is stable under the adjoint action of $\mmm_\bbc$, there exists a
canonical isomorphism between $U({\ggg}_\bbc,\hat e)$ and the fixed point algebra $(Q_\bbc)_{\chi}^{\ad\,\mmm_\bbc}$ given by $\hat u\mapsto \hat u(\hat 1_\chi)$ for any $\hat u\in U({\ggg}_\bbc,\hat e)$. {\it In what follows we will often identify $(Q_\bbc)_\chi$ with $U({\ggg}_\bbc)/(I_\bbc)_\chi$, and identify $U({\ggg}_\bbc,\hat e)$ with $(Q_\bbc)_{\chi}^{\ad\,\mmm_\bbc}$.}

For $k\in\mathbb{Z}_+$, define $\mathbb{Z}_+^k:=\{(i_1,\cdots,i_k)\mid i_j\in\mathbb{Z}_+\}$ with $1\leqslant j\leqslant k$, and  set $|\mathbf{i}|:=i_1+\cdots+i_k$.  Given $(\mathbf{a},\mathbf{b})\in{\bbz}^m_+\times{\bbz}^s_+$, let $\hat x^\mathbf{a}\hat z^\mathbf{b}$ denote the monomial $\hat x_1^{a_1}\cdots \hat x_m^{a_m}\hat z_1^{b_1}\cdots \hat z_s^{b_s}$ in $U({\ggg}_\bbc)$. Set $Q_{\chi,A}:=U({\ggg}_A)\otimes_{U(\mathfrak{m}_A)}A_\chi$, where $A_\chi=A\hat1_\chi$. By definition $Q_{\chi,A}$ is a ${\ggg}_A$-stable $A$-lattice in $(Q_\bbc)_{\chi}$ with $\{\hat x^\mathbf{a}\hat z^\mathbf{b}\otimes\hat1_\chi\mid(\mathbf{a},\mathbf{b})\in{\bbz}^m_+\times{\bbz}^s_+\}$
being a free basis. Given $(\mathbf{a},\mathbf{b})\in{\bbz}_+^m\times{\bbz}_+^s$, set
\begin{equation}\label{edegree}
\begin{split}
\text{deg}_{\hat e}(\hat x^\mathbf{a}\hat z^\mathbf{b})=|(\mathbf{a},\mathbf{b})|_{\hat e}:=&\sum_{i=1}^ma_i(n_i+2)+\sum_{i=1}^sb_i=\text{wt}(\hat x^\mathbf{a}\hat z^\mathbf{b})+2\text{deg}_S(\hat x^\mathbf{a}\hat z^\mathbf{b}),
\end{split}
\end{equation}
where $\text{wt}(\hat x^\mathbf{a}\hat z^\mathbf{b}):=\sum_{i=1}^ma_in_i-\sum_{i=1}^sb_i$ and $\text{deg}_S(\hat x^\mathbf{a}\hat z^\mathbf{b}):=|\mathbf{a}|+|\mathbf{b}|$ are the weight and the standard degree of $\hat x^\mathbf{a}\hat z^\mathbf{b}$, respectively. 
By \cite[Theorem 4.6]{Pre1}, the finite $W$-algebra $U({\ggg}_\mathbb{C},\hat e)$ is generated by endomorphisms $\hat\Theta_1,\cdots,\hat\Theta_{\sfd}$ with
\begin{equation}\label{generators}
\hat\Theta_k(\hat1_\chi)=(\hat x_k+\sum_{\mbox{\tiny $\begin{array}{c}|\mathbf{a},\mathbf{b}|_{\hat e}=n_k+2,\\|\mathbf{a}|
+|\mathbf{b}|\geqslant 2\end{array}$}}\hat\lambda^k_{\mathbf{a},\mathbf{b}}\hat x^{\mathbf{a}}
\hat z^{\mathbf{b}}+\sum_{|\mathbf{a},\mathbf{b}|_{\hat e}<n_k+2}\hat\lambda^k_{\mathbf{a},\mathbf{b}}\hat x^{\mathbf{a}}
\hat z^{\mathbf{b}})\otimes\hat1_\chi
\end{equation}
for $1\leqslant  k\leqslant\sfd$, where $\hat\lambda^k_{\mathbf{a},\mathbf{b}}\in\mathbb{Q}$, and $\hat\lambda^k_{\mathbf{a},\mathbf{b}}=0$ if $a_{\sfd+1}=\cdots=a_m=b_1=\cdots=b_s=0$.
Moreover, the monomials $\hat\Theta_1^{a_1}\cdots\hat\Theta_\sfd^{a_\sfd}$ with $(a_1,\cdots,a_\sfd)\in{\bbz}^\sfd_+$  form a basis of the vector space $U({\ggg}_\bbc,\hat e)$. As in \cite{Pre4}, we assume that our admissible ring $A$ contains all $\hat\lambda^k_{\mathbf{a},\mathbf{b}}$'s in \eqref{generators}. For ${\bf a}=(a_1,\cdots,a_\sfd)\in\bbz_+^\sfd$, let
$U({\ggg}_A,\hat e)$ be the $A$-span of the monomials
$\{\hat\Theta_1^{a_1}\cdots\hat\Theta_\sfd^{a_\sfd}\mid {\bf{a}}\in \bbz_+^\sfd\},$ which equals $(\End_{\ggg_A}(Q_{\chi,A}))^{\op}$, an $A$-subalgebra of ${U}(\ggg_\mathbb{C},\hat e)$ (see \cite[(3)]{Pre3} for more details).

Let $\hat Y_1,\cdots, \hat Y_n$ be  homogeneous elements in $\ggg_\bbc$. Assume that $U({\ggg}_\bbc)=\bigcup_{i\in{\bbz}}\text{F}_iU({\ggg}_\bbc)$ is a filtration of $U({\ggg}_\bbc)$, where $\text{F}_iU({\ggg}_\bbc)$ is the ${\bbc}$-span of all $\hat Y_{i_1}\cdots \hat Y_{i_k}$ with $\hat Y_{i_1}\in{\ggg}_\bbc(j_1),\cdots,$\\$\hat Y_{i_k}\in{\ggg}_\bbc(j_k)$ such that $(j_1+2)+\cdots+(j_k+2)\leqslant  i$. This filtration is called {\sl Kazhdan filtration}. It is obvious that the Kazhdan grading of $\hat Y_{i_1}\cdots \hat Y_{i_k}$ in $U(\ggg_\bbc)$ is equal to $\text{wt}(\hat Y_{i_1}\cdots \hat Y_{i_k})+2\text{deg}_S(\hat Y_{i_1}\cdots \hat Y_{i_k})$, where $\text{wt}(\hat Y_{i_1}\cdots \hat Y_{i_k})=j_1+\cdots+j_k$ and $\text{deg}_S(\hat Y_{i_1}\cdots \hat Y_{i_k})=k$ are the weight and the standard degree of $\hat Y_{i_1}\cdots \hat Y_{i_k}$ respectively, which coincides with the $e$-degree of the elements of $(Q_\bbc)_{\chi}$ in \eqref{edegree}. The Kazhdan filtration on $(Q_\bbc)_{\chi}$ is defined by $\text{F}_i(Q_\bbc)_{\chi}:=\pi(\text{F}_iU({\ggg}_\bbc))$ with $\pi:U({\ggg}_\bbc)\twoheadrightarrow U({\ggg}_\bbc)/(I_\bbc)_\chi$ being the canonical homomorphism, which makes $(Q_\bbc)_{\chi}$  a filtered $U({\ggg}_\bbc)$-module. Then there is an induced Kazhdan filtration $\text{F}_i U({\ggg}_\bbc,\hat e)$ on the subspace $U({\ggg}_\bbc,\hat e)=(Q_\bbc)_{\chi}^{\ad\,\mmm_\bbc}$ of $(Q_\bbc)_{\chi}$ such that $\text{F}_j U({\ggg}_\bbc,\hat e)=0$ unless $j\geqslant0$.

Let $S((\ggg_\bbc)_{\hat e})$ denote the symmetric algebra of $(\ggg_\bbc)_{\hat e}$. By the $\mathfrak{sl}_2$-theory,
\begin{equation}\label{eq: decomp p}
\tilde{\ppp}_\mathbb{C}=({\ggg}_\mathbb{C})_{\hat e}\oplus\bigoplus_{i\geqslant2}[\hat f,{\ggg}_\bbc(i)]\oplus{\mathfrak{l}}_\mathbb{C},
 \end{equation}
which gives rise to the projection $\tilde{\ppp}_\mathbb{C}\twoheadrightarrow({\ggg}_\mathbb{C})_{\hat e}$.  Under the above settings, Premet showed in
\cite{Pre1} that there exists an algebra isomorphism under the Kazhdan grading,  
i.e.,
\begin{equation}\label{grse'}
\begin{array}{lcll}
\hat\psi:&\text{gr}(U(\ggg_\bbc,\hat e))&\xrightarrow\sim&S((\ggg_\bbc)_{\hat e})\\ &\text{gr}(\hat \Theta_i)&\mapsto&\hat x_i
\end{array}
\end{equation}for $1\leqslant i\leqslant\sfd$.

\subsection{Finite $W$-algebras in positive characteristic}
\subsubsection{}\label{221}
In the subsequent arguments, we mainly make use of the method of ``reduction modulo $p$" to study the counterparts of all the above over the algebraic closured field ${\bbk}=\overline{\mathbb{F}}_p$ of positive characteristic $p\in\Pi(A)$.
The bilinear form $(\cdot,\cdot)$ 
induces a bilinear form on the Lie algebra ${\ggg}\cong{\ggg}_A\otimes_A{\bbk}$, which 
will still be denoted 
by $(\cdot,\cdot)$. If we denote by $G$ the algebraic ${\bbk}$-group associated with the distribution algebra $U=U_{\bbz}\otimes_{\bbz}{\bbk}$, then ${\ggg}=\text{Lie}(G)$ (cf. \cite[\S2.1]{Jan2}). 
For $\hat x\in{\ggg}_A$, set $x:=\hat x\otimes_A1$ to be the corresponding element in ${\ggg}$. Denote by $e=\hat e\otimes_A1,~f=\hat f\otimes_A1$ and $h=\hat h\otimes_A1$ in ${\ggg}$.

Consider the $p$-center $Z_0(\ggg)$ of $U(\ggg)$. There is a $G$-equivariant $\bbk$-algebra isomorphism
$
\varsigma:S(\ggg)^{(1)}\xrightarrow\sim Z_0({\ggg})
$
determined by sending $x\mapsto x^p-x^{[p]}$ for $x\in\ggg$, where the superscript ${}^{(1)}$ denotes the Frobenius twist.
Since the Frobenius map of ${\bbk}$ is bijective, this enables us to identify the maximal spectrum $\text{Specm}(Z_0({\ggg}))$ of $Z_0({\ggg})$ with $({\ggg}^*)^{(1)}$.
For any $\xi\in{\ggg}^*$ we denote by $J_\xi$ the two-sided ideal of $U({\ggg})$ generated by the central elements $\{x^p-x^{[p]}-\xi(x)^p\mid x\in{\ggg}\}$. The quotient algebra $U_\xi({\ggg}):=U({\ggg})/J_\xi$ is called the reduced enveloping algebra with $p$-character $\xi$.

\subsubsection{}\label{222}
For $i\in{\bbz}$, set ${\ggg}(i):={\ggg}_A(i)\otimes_A{\bbk}$ and $\mathfrak{m}:=\mathfrak{m}_A\otimes_A{\bbk}$. Then $\mathfrak{m}$ is a restricted subalgebra of $\mathfrak{g}$. Denote by $\mathfrak{h}:=\mathfrak{h}_A\otimes_A{\bbk}$, $\mathfrak{n}^-:=\mathfrak{n}^-_A\otimes_A{\bbk}$, $\mathfrak{n}^+:=\mathfrak{n}^+_A\otimes_A{\bbk}$, $\mathfrak{l}:=\mathfrak{l}_A\otimes_A{\bbk}$, $\mathfrak{p}:=\mathfrak{p}_A\otimes_A{\bbk}$,
$\tilde{\mathfrak{p}}:=\tilde{\mathfrak{p}}_A\otimes_A{\bbk}$ and
$\mathfrak{\ggg}_e:=({\ggg}_A)_{\hat e}\otimes_A{\bbk}$ as in \S\ref{211}, respectively.

By our assumptions at the end of \S\ref{211} and the procedure of ``modulo $p$ reduction", the elements $x_1,\cdots,x_\sfd$ form a basis of the centralizer ${\ggg}_e$ of $e$ in ${\ggg}$. 
Set $Q_{\chi}:=U({\ggg})\otimes_{U(\mathfrak{m})}{\bbk}_\chi$, 
then $Q_{\chi}\cong Q_{\chi,A}\otimes_A{\bbk}$ as ${\ggg}$-modules. 
Let $I_{\chi}$ denote the left ideal of $U({\ggg})$ generated by all $x-\chi(x)$ with $x\in\mathfrak{m}$. Then we further have $U(\ggg)/I_\chi\cong Q_{\chi}$ as ${\ggg}$-modules via the ${\ggg}$-module map sending $1+I_\chi$ to $1_\chi$.

Recall a finite $W$-algebra $\ct(\ggg,e)$ is by definition, equal to  $U({\ggg}_A,\hat e)\otimes_A{\bbk}$.
Clearly $\ct({\ggg},e)$ has a ${\bbk}$-basis consisting of all monomials $\Theta_1^{a_1}\cdots\Theta_{\sfd}^{a_{\sfd}}$, where $\Theta_i:=\hat\Theta_i\otimes1\in U({\ggg}_A,\hat e)\otimes_A{\bbk}$ with
\begin{equation}\label{generators'}
\Theta_k(1_\chi)=(x_k+\sum_{\mbox{\tiny $\begin{array}{c}|\mathbf{a},\mathbf{b}|_{e}=n_k+2,\\|\mathbf{a}|
+|\mathbf{b}|\geqslant 2\end{array}$}}\lambda^k_{\mathbf{a},\mathbf{b}}x^{\mathbf{a}}
z^{\mathbf{b}}+\sum_{|\mathbf{a},\mathbf{b}|_{e}<n_k+2}\lambda^k_{\mathbf{a},\mathbf{b}} x^{\mathbf{a}}z^{\mathbf{b}})\otimes1_\chi
\end{equation}
for $1\leqslant  k\leqslant\sfd$, where $\lambda^k_{\mathbf{a},\mathbf{b}}\in\bbk$, and $\lambda^k_{\mathbf{a},\mathbf{b}}=0$ if $a_{\sfd+1}=\cdots=a_m=b_1=\cdots=b_s=0$.
Let $\text{gr}(\ct(\ggg,e))$ denote the graded algebra of $\ct(\ggg,e)$ under the Kazhdan grading, and $S(\ggg_e)$ the symmetric algebras of $\ggg_e$. Let $\tilde{\ppp}\twoheadrightarrow{\ggg}_{e}$ be the projection along the decomposition $\tilde{\ppp}={\ggg}_{e}\oplus\bigoplus_{i\geqslant2}[f,{\ggg}(i)]\oplus\mathfrak{l}$  (see \eqref{eq: decomp p}). By the same discussion as in \eqref{grse'} there also exists an isomorphism between $\mathds{k}$-algebras
\begin{equation}\label{grse}
\begin{array}{lcll}
\bar\psi:&\text{gr}(\ct(\ggg,e))&\xrightarrow\sim&S(\ggg_e)\\ &\text{gr}(\Theta_i)&\mapsto&x_i
\end{array}
\end{equation}for $1\leqslant i\leqslant\sfd$.

Now we introduce two more equivalent definitions of the finite $W$-algebra $\ct({\ggg},e)$.
 At first, we describe $\mathcal{M}$ mentioned in the introduction, which is by definition, the connected unipotent of $G$ such that $\text{Ad}\,\mathcal{M}$ is generated by all linear operators $\text{exp(ad}\,x)$ with $x\in\mmm$.
It follows from \cite[Lemma 4.1]{GT} that $I_\chi$ is stable under the adjoint action of $\mathcal{M}$. Then
the adjoint action of $\mathcal{M}$ on $Q_\chi$ given by $g.(u+I_\chi)=(g.u)+I_\chi$
for $g\in\mathcal{M}$ and $u\in U(\ggg)$ descends to an adjoint action on $\text{gr}(Q_\chi)$. Moreover, the $\mathcal{M}$-invariant space \begin{equation*}\label{W2}
Q_\chi^{\mathcal{M}}:=\{u+I_\chi\in Q_\chi\mid g.u+I_\chi=u+I_\chi~\text{for all}~g\in\mathcal{M}\}
\end{equation*}is readily an algebra, which is introduced as the finite $W$-algebra over $\mathds{k}$
in \cite[Definition 4.3]{GT}.
Since $Q_\chi$ is a locally finite $\mathcal{M}$-module under the adjoint action, the differential of this $\mathcal{M}$-module structure coincides with the $\mmm$-module structure on $Q_\chi$ given by the adjoint action. Then the invariant space $Q_{\chi}^{\text{ad}\,{\mmm}}$
inherits an algebra structure from $U(\ggg)$ with the $\mathcal{M}$-invariants
$Q_\chi^{\mathcal{M}}$ embedded as a subalgebra (see \cite[Lemma 4.2]{GT} for more details).
On the other hand, 
the adjoint action of $\mathcal{M}$ on $Q_\chi$ induces an action on $(\text{End}_{\ggg}Q_{\chi})^{\text{op}}$ by $(g.f)(u+I_\chi)=g.(f(g^{-1}.u+I_\chi))$ for $g\in\mathcal{M}$, $f\in\text{End}_{\ggg}Q_{\chi}$, $u\in U(\ggg)$, and the invariant subalgebra is denoted by $(\text{End}_{\ggg}^\mathcal{M}Q_{\chi})^{\text{op}}$.
It follows from \cite[Lemma 4.4 and Theorem 7.3]{GT} that
\begin{equation}\label{equiW}
\ct(\ggg,e)\cong Q_\chi^{\mathcal{M}}\cong (\text{End}_{\ggg}^\mathcal{M}Q_{\chi})^{\text{op}}
\end{equation} as $\mathds{k}$-algebras. {\it We will identify $\ct(\ggg,e)$ with $Q_\chi^{\mathcal{M}}$ and $(\text{End}_{\ggg}^\mathcal{M}Q_{\chi})^{\text{op}}$ throughout the paper.}

Recall that the extended finite $W$-algebra associated to $\chi$ over $\mathds{k}$ is by definition, equal to $(\text{End}_{{\ggg}}Q_{\chi})^{\text{op}}$. By the same discussion as the case over $\bbc$ in \S\ref{212}, there exists a $\bbk$-algebras isomorphism
$(\text{End}_{\ggg}Q_{\chi})^{\text{op}}\cong Q_{\chi}^{\text{ad}\,{\mmm}}$ by sending $u$ to $u(1_\chi)$ for any $u\in U({\ggg},e)$.
{\it From now on, we will identify $(\text{End}_{{\ggg}}Q_{\chi})^{\text{op}}$ with $Q_{\chi}^{\text{ad}\,{\mmm}}$.} Moreover, it is notable that the finite $W$-algebra $\ct({\ggg},e)$  can be naturally identified with a subalgebra of the extended finite $W$-algebra $U({\ggg},e)$ over ${\bbk}$ by  definition. The standard grading on $U(\ggg)$ and the Kazhdan filtrations on $\ct({\ggg},e)$ and $U({\ggg},e)$ can also be defined as in \S\ref{212}.
\subsection{The ring-theoretic property of finite $W$-algebras}\label{ring theory property}
To discuss the related topics on finite $W$-algebras and their subalgebras, we first need the following observation.
\begin{lemma} \label{Noe Pri}The following statements hold.
\begin{itemize}
\item[(1)] 
Both $\text{gr}(U(\ggg_\bbc,\hat e))$ and $\text{gr}(\ct(\ggg,e))$ are unique factorization domains; 
\item[(2)] Both $U(\ggg_\bbc,\hat e)$ and $\ct(\ggg,e)$ are Noetherian rings;
\item[(3)] Both $U(\ggg_\bbc,\hat e)$ and $\ct(\ggg,e)$ are prime rings, which do not contain any zero-divisor.
\end{itemize}
\end{lemma}
\begin{proof}
First recall the isomorphisms in \eqref{grse'} and \eqref{grse}, which show that the gradation of $U(\ggg_\bbc,\hat e)$ and $\ct(\ggg,e)$ are isomorphic to polynomial algebras. Then the standard filtration arguments work, as
these properties hold for the associated graded algebras.
The detailed proof will be omitted here.
\end{proof}

\subsection{The $p$-centers of finite $W$-algebras and extended finite $W$-algebras}\label{p-center}
Let $\varphi$ be the natural representation of $\ggg$ over $Q_\chi$, which induces a $\mathds{k}$-algebras homomorphism from $Z(\ggg)$ to $U(\ggg,e)$ such that
\[\begin{array}{lcll}
\varphi:&Z(\ggg)&\rightarrow&U(\ggg,e)\\ &x&\mapsto&l_x
\end{array}
\]where $l_x(1_\chi)=x.1_\chi\in Q_\chi$ for any $x\in Z(\ggg)$.  Recall that  $U(\ggg,e)=(\text{End}_{\ggg}Q_{\chi})^{\text{op}}$ which is isomorphic to $Q_{\chi}^{\text{ad}\,{\mmm}}$ by our earlier discussion. For any $x\in Z(\ggg)$ we can also consider $\varphi(x)$ as the  element $(\varphi(x))(1_\chi)$ in $Q_{\chi}^{\text{ad}\,{\mmm}}$ and do not distinguish them from now on.
The map $\varphi$ plays a critical rule in this paper.
\subsubsection{}\label{pcen}
We first look at the $p$-center of the extended finite $W$-algebra $U(\ggg,e)$ introduced in \S\ref{zpcenter}.
Note that
$Z_0(\tilde{\mathfrak{p}})\cap\ker(\varphi)=\{0\}$ by the PBW theorem. So
$\varphi(Z_0(\ggg))=\varphi(Z_0(\tilde{\mathfrak{p}}))\cong Z_0(\tilde{\mathfrak{p}})$ as $\bbk$-algebras.
{\it From now on we identify $Z_0(\tilde{\mathfrak{p}})$ with $\varphi(Z_0(\tilde{\mathfrak{p}}))$ and $\varphi(Z_0(\ggg))$ in the paper}.

\subsubsection{}\label{312}
Now we describe  the $p$-center of the finite $W$-algebra $\ct(\ggg,e)$ mentioned  in \S\ref{zpcenter} (see \cite[Remark 2.1]{Pre2} and \cite[\S8]{GT} for more details).
Recall that the associated graded algebra of $Q_\chi$ under the Kazhdan grading is $\text{gr}(Q_\chi)=S(\ggg)/\text{gr}(I_\chi)$, and by the PBW theorem we have that $S(\ggg)=S(\tilde{\mathfrak{p}})\oplus\text{gr}(I_\chi)$. As $\text{pr}: S(\ggg)\rightarrow S(\tilde{\mathfrak{p}})$ is the projection along this direct sum decomposition, this restricts to an isomorphism
$\text{gr}(Q_{\chi})\cong S(\tilde{\mathfrak{p}})$.

The adjoint action of $\mathcal{M}$ (defined in \S\ref{222}) on $Q_\chi$ descends to an adjoint action on $\text{gr}(Q_\chi)$ and this gives a twisted action of $\mathcal{M}$ on $S(\tilde{\mathfrak{p}})$ defined by $\text{tw}(g)\cdot f:=\text{pr}(g.f)$
for $g\in\mathcal{M}$ and $f\in S(\tilde{\mathfrak{p}})$, where $g.f$ denotes the usual adjoint action of $g$ on $f$ in $S(\ggg)$. We write $S(\tilde{\mathfrak{p}})^{\mathcal{M}}$ for the invariants with respect to this action.

On the other hand, 
we can identify $S(\ggg)$ with the algebra $\mathds{k}[\ggg^*]$ of regular functions on the affine variety $\ggg^*$.
Let $\mathfrak{m}^\perp$ denote the set of all linear functions on $\ggg$ vanishing on $\mmm$, i.e., ${\mmm}^\perp:=\{f\in\ggg^*\mid f(\mmm)=0\}$.
Then $\text{gr}(I_\chi)$ is 
the ideal of all functions in $\mathds{k}[\ggg^*]$ vanishing on the closed subvariety $\chi+{\mmm}^\perp$ of $\ggg^*$. 
In this way, we have identified $\text{gr}(Q_{\chi})\cong\mathds{k}[\chi+\mathfrak{m}^\perp]$, and then $S(\tilde{\mathfrak{p}})\cong\mathds{k}[\chi+\mathfrak{m}^\perp]$.

As we identify $S(\tilde{\mathfrak{p}})$ with $\mathds{k}[\chi+\mathfrak{m}^\perp]$, we may regard the $\mathcal{M}$-algebra $Z_0(\tilde{\mathfrak{p}})$ as the coordinate algebra of the Frobenius twist $(\chi+\mathfrak{m}^\perp)^{(1)}\subseteq(\ggg^*)^{(1)}$ of $\chi+\mathfrak{m}^\perp$, where the natural action $\mathcal{M}$ on $(\chi+\mathfrak{m}^\perp)^{(1)}$ is a Frobenius twist of the coadjoint action of $\mathcal{M}$ on $\chi+\mathfrak{m}^\perp$. As $\mathds{k}[\chi+\mathfrak{m}^\perp]^{\mathcal{M}}\cong\mathds{k}[\chi+\kappa(\text{Ker}(\ad\,f))]$ by \cite[Lemma 3.2]{Pre3}  (where the map $\kappa$ is the Killing isomorphism from $\ggg$ to $\ggg^*$ taking $x$ to $(x,\cdot)$), write $Z_0(\tilde{\mathfrak{p}})^{\mathcal{M}}:=\mathds{k}[(\chi+\mathfrak{m}^\perp)^{(1)}]^{\mathcal{M}}
\cong\mathds{k}[(\chi+\kappa(\text{Ker}(\ad\,f)))^{(1)}]$, the function algebra on the Frobenius twist of $\kappa(\cs)$ with $\cs:=e+\text{Ker}(\ad\,f)$ being the Slodowy slice.

Recall the isomorphism $\varsigma:S(\ggg)^{(1)}\xrightarrow\sim Z_0({\ggg})$ from \S\ref{221}. Write $\mathfrak{m}_\chi$ for the ideal of $U(\mathfrak{m})$ generated by all $x-\chi(x)$ with $x\in\mathfrak{m}$.
We write $I_p:=\varsigma(\mathfrak{m}_\chi^{(1)})Z_0(\ggg)$ for $\varsigma(\mathfrak{m}_\chi^{(1)}):=\{x^p-x^{[p]}-\chi(x)^p\mid x\in\mathfrak{m}\}$.
Since the group $\mathcal{M}$ preserves the left ideal $I_\chi$ and $\varsigma$ is $G$-equivariant, then $\mathcal{M}$ acts on both $U(\ggg,e)\cong Q_{\chi}^{\text{ad}\,{\mmm}}$ and $Z_0(\ggg)/I_p$.
 Let $U(\mathfrak{g},e)^{\mathcal{M}}$ and $(Z_0(\ggg)/I_p)^{\mathcal{M}}$  denote the fixed point subspace of $U(\mathfrak{g},e)$ and $Z_0(\ggg)/I_p$ under the action of $\text{Ad}\,\mathcal{M}$, respectively.

 On the other hand, since $Z_0(\tilde{\mathfrak{p}})\cong\varphi(Z_0(\ggg))\cong Z_0(\ggg)/I_p$ by definition, then we have  $Z_0(\tilde{\mathfrak{p}})^{\mathcal{M}}
 \cong (Z_0(\ggg)/I_p)^{\mathcal{M}}$ as $\bbk$-algebras. {\it We will identify $Z_0(\tilde{\mathfrak{p}})^{\mathcal{M}}$ with $(Z_0(\ggg)/I_p)^{\mathcal{M}}$ in the paper.}
In virtue of \eqref{equiW}, $Z_0(\tilde{\mathfrak{p}})^{\mathcal{M}}$ is a subalgebra of the finite $W$-algebra $\ct(\ggg,e)=U(\mathfrak{g},e)^{\mathcal{M}}$. Moreover, in \cite[Remark 2.1]{Pre2} Premet showed that
\begin{gather}\label{twoequation}
Z_0(\tilde{\mathfrak{p}})^{\mathcal{M}}=Z_0(\tilde{\mathfrak{p}})\cap \ct(\mathfrak{g},e).
\end{gather}
\begin{defn}\label{pcenterW}
The $p$-center of the finite $W$-algebra $\ct(\ggg,e)$ is defined as the invariant subalgebra $Z_0(\tilde{\mathfrak{p}})^{\mathcal{M}}(\cong (Z_0(\ggg)/I_p)^{\mathcal{M}}$).
\end{defn}

For any $\mathbf{a}=(a_1,\cdots,a_m)\in\mathbb{Z}_+^m$ and $\mathbf{b}=(b_1,\cdots,b_s)\in\mathbb{Z}_+^s$, set $$x^\mathbf{a}z^{\mathbf{b}}:=x_1^{a_1}\cdots x_m^{a_m}z_1^{b_1}\cdots z_s^{b_s}.$$
Using \eqref{generators'}, Goodwin-Topley obtained descriptions of PBW generators of
$Z_0(\tilde{\mathfrak{p}})^{\mathcal{M}}$ in \cite[(8.2)]{GT} as
\begin{equation}\label{xkzp}
\Phi(x_k)=(\varsigma(x_k)+\sum_{\mbox{\tiny $\begin{array}{c}|\mathbf{a},\mathbf{b}|_{e}=n_k+2,\\|\mathbf{a}|+|\mathbf{b}|
\geqslant2\end{array}$}}\lambda^k_{\mathbf{a},\mathbf{b}}\varsigma(x^{\mathbf{a}}z^{\mathbf{b}})+I_p)\otimes1_\chi
\end{equation}
for $1\leqslant k\leqslant\sfd$ with $\lambda^k_{\mathbf{a},\mathbf{b}}\in\bbk$.

Keeping in mind that $x^{[p]}\in\ggg(pi)$ whenever $x\in\ggg(i)$ for all $i\in\mathbb{Z}$
. Then in the graded algebra associated with the Kazhdan-filtered algebra $U(\ggg)$ we have $\text{deg}_e(x_i^p-x_i^{[p]})=\text{deg}_e(x_i^p)$ for $1\leqslant i\leqslant \sfd$.  So it readily follows from \eqref{grse} and \eqref{xkzp} that the following restriction of $\bar\psi$, denoted by  the same notation, gives rise to an isomorphism
 \begin{equation}\label{grp}
\bar\psi:\text{gr}(Z_0(\tilde{\mathfrak{p}})^{\mathcal{M}})\xrightarrow\sim\bbk[x_1^p,\cdots,x_\sfd^p].
\end{equation}

\subsubsection{}\label{for fintecen}
Let $\mathfrak{a}:=\{x\in\tilde{\mathfrak{p}}\mid (x,\text{Ker}(\ad\,f))=0\}$ be a subspace of $\tilde{\mathfrak{p}}$.
Thanks to the projection $\tilde{\ppp}\twoheadrightarrow{\ggg}_{e}$ (see \eqref{eq: decomp p}), the following results can be obtained by applying \cite[(8.2), (7.4) and (8.3)]{GT}.

\begin{lemma}(\cite{GT})\label{pmge}
There exist isomorphisms between $\mathds{k}$-algebras:
\begin{itemize}
\item[(1)] $ Z_0(\tilde{\mathfrak{p}})^{\mathcal{M}}\cong Z_0(\ggg_e)$;
\item[(2)] $Z_0(\widetilde{\mathfrak{p}})^{\mathcal{M}}\otimes_{\mathds{k}}Z_0(\mathfrak{a})\cong Z_0(\tilde{\mathfrak{p}})$ under the multiplication map.
\end{itemize}
\end{lemma}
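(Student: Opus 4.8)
\textbf{Proof proposal for Lemma \ref{pmge}.}
The plan is to reduce both isomorphisms to the graded picture provided by the Kazhdan filtration and the identification \eqref{grse}, together with the structural facts about $p$-centers recorded in \S\ref{pcen} and \S\ref{312}. For part (2), I would start from the $\bbk$-algebra isomorphism $Z_p(\ggg)\cong Z_p(\mmm)\otimes_\bbk Z_p(\tilde{\mathfrak{p}})$ and the decomposition $\tilde{\mathfrak{p}}=\mathfrak{a}\oplus\ggg_e$ as vector spaces. Since $Z_p(\tilde{\mathfrak{p}})$ is the polynomial algebra on $x^p-x^{[p]}$ as $x$ ranges over a basis of $\tilde{\mathfrak{p}}$, and $Z_p(\mathfrak{a})$, $Z_p(\ggg_e)$ are the sub-polynomial-algebras on the corresponding basis vectors of $\mathfrak{a}$ and of $\ggg_e$, the multiplication map $Z_p(\ggg_e)\otimes_\bbk Z_p(\mathfrak{a})\to Z_p(\tilde{\mathfrak{p}})$ is manifestly an isomorphism of polynomial rings; but what is needed is the $\mathcal{M}$-twisted version, i.e. that the factor $Z_p(\ggg_e)$ can be replaced by its $\mathcal{M}$-invariants $Z_p(\tilde{\mathfrak{p}})^{\mathcal{M}}$. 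So the real content of (2) is that every $\mathcal{M}$-invariant element of $Z_p(\tilde{\mathfrak{p}})$ already lies in the subalgebra generated by the $p$-center directions coming from $\ggg_e$, together with the fact that $\mathcal{M}$ acts trivially on $Z_p(\mathfrak{a})$ in the twisted sense — which follows because, under the identification $Z_p(\tilde{\mathfrak{p}})\cong\bbk[(\chi+\mmm^\perp)^{(1)}]$ of \S\ref{312}, the twisted $\mathcal{M}$-action is a Frobenius twist of the coadjoint action on $\chi+\mmm^\perp$, and $\mathbb{k}[\chi+\mmm^\perp]^{\mathcal{M}}\cong\mathbb{k}[\chi+\kappa(\ker(\ad f))]$ by \cite[Lemma 3.2]{Pre3}, whose coordinate directions are exactly the $\ggg_e$-directions. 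Concretely, choosing the basis $X_1,\dots,X_{(\dim\ggg-\dim\ggg_e)/2}$ of $\mathfrak{a}$ and $x_1,\dots,x_\sfd$ of $\ggg_e$ from \S\ref{p-center}, the twisted action of $\mathcal{M}$ fixes each $X_i^p-X_i^{[p]}$ modulo the ideal generated by the $\mmm$-directions, and the $\mathcal{M}$-orbit of each $x_i^p-x_i^{[p]}$ stays inside the $Z_p(\ggg_e)$-part; hence $Z_p(\tilde{\mathfrak{p}})^{\mathcal{M}}$ is precisely the image of $Z_p(\ggg_e)$ and the displayed tensor decomposition follows.

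For part (1), the isomorphism $Z_p(\ggg_e)\cong Z_p(\tilde{\mathfrak{p}})^{\mathcal{M}}$ I would obtain by combining two observations. First, by \eqref{grse} and the compatibility of the Kazhdan filtration with the $p$-center (the elements $\Theta_i^p$ and $x_i^p-x_i^{[p]}$ have the same Kazhdan degree after passing to $\gr$), the graded algebra of $Z_p(\tilde{\mathfrak{p}})^{\mathcal{M}}$ inside $\gr\,\ct(\ggg,e)\cong S(\ggg_e)$ is the subalgebra generated by the $p$-th powers of the coordinate functions on $\ggg_e$, which is abstractly isomorphic to $Z_p(\ggg_e)\cong\bbk[(\ggg_e^*)^{(1)}]$, a polynomial ring in $\sfd=\dim\ggg_e$ variables. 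Second, the map in question is realized geometrically: \S\ref{312} identifies $Z_p(\tilde{\mathfrak{p}})^{\mathcal{M}}$ with the coordinate ring of the Frobenius twist of the Slodowy slice $\cs=e+\ker(\ad f)$, which is an affine space of dimension $\dim\ker(\ad f)=\dim\ggg_e=\sfd$, so its coordinate ring is again a polynomial ring in $\sfd$ variables, and the natural map from $Z_p(\ggg_e)$ (polynomial ring on $\ggg_e^{(1)}$, identified with $(\ker\ad f)^{(1)}$ via $\kappa$) is a graded isomorphism because it is a surjection of polynomial rings in the same number of generators which is injective on generators. I would spell this out by checking that $\varphi$ carries $x_i^p-x_i^{[p]}$, for $x_i\in\ggg_e$, to an element of $\ct(\ggg,e)$ whose Kazhdan-leading term is the $p$-th power of the corresponding coordinate function, so that these $\sfd$ images are algebraically independent and generate $Z_p(\tilde{\mathfrak{p}})^{\mathcal{M}}$ over $\bbk$.

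The step I expect to be the main obstacle is pinning down the twisted $\mathcal{M}$-action precisely enough to conclude that $Z_p(\tilde{\mathfrak{p}})^{\mathcal{M}}$ is \emph{exactly} (not merely up to the right dimension) the image of $Z_p(\ggg_e)$, and dually that it has trivial intersection with the "$\mathfrak{a}$-part" in the sense required for the clean tensor factorization in (2). The subtlety is that the twisted action $\mathrm{tw}(g)\cdot f=\varphi(g\cdot f)$ mixes $\tilde{\mathfrak{p}}$-directions nontrivially, so one cannot simply read off invariants basis-vector by basis-vector; one needs the full strength of the identification $\bbk[\chi+\mmm^\perp]^{\mathcal{M}}\cong\bbk[\chi+\kappa(\ker\ad f)]$ from \cite[Lemma 3.2]{Pre3} (and its evident restriction to the $p$-center / Frobenius-twist level, where $\mathcal{M}$ is unipotent so invariants behave well). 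Once that identification is in hand, both parts of the lemma are essentially bookkeeping with polynomial rings and Kazhdan degrees; since all of this is asserted in \cite[(8.2),(7.4),(8.3)]{GT}, the write-up can be kept short, quoting Goodwin--Topley for the hard geometric input and supplying only the identifications of generators and the dimension count.
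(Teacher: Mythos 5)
A preliminary remark: the paper does not prove this lemma at all --- it is quoted from Goodwin--Topley \cite[(8.2), (7.4), (8.3)]{GT} --- so the only question is whether your reconstruction would stand on its own. It would not: the step on which you make both parts rest is false.

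You assert that ``the $\mathcal{M}$-orbit of each $x_i^p-x_i^{[p]}$ stays inside the $Z_p(\ggg_e)$-part, hence $Z_p(\tilde{\mathfrak{p}})^{\mathcal{M}}$ is precisely the image of $Z_p(\ggg_e)$'', and correspondingly for (1) you propose to check that $\varphi(x_i^p-x_i^{[p]})$, $x_i\in\ggg_e$, lies in $\ct(\ggg,e)$ with leading term $\bar x_i^p$. Already for $\ggg=\sssl_2$ with $e$ regular (so $\mmm=\bbk f$, $\ggg_e=\bbk e$, $\mathfrak{a}=\bbk h$) this fails: for $g=\exp(t\,\ad f)\in\mathcal{M}$ one has $\Ad(g)e=e-th-t^2f$, and since $x\mapsto x^p-x^{[p]}$ is additive and $p$-semilinear and $\varphi(f^p-f^{[p]})=\chi(f)^p=1$, the twisted action gives $\mathrm{tw}(g)\cdot\varphi(e^p)=\varphi(e^p)-t^p\varphi(h^p-h^{[p]})-t^{2p}$. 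Thus $\varphi(e^p)$ is not $\mathcal{M}$-invariant and its orbit leaves the ``$\ggg_e$-part''; the actual invariant generator is $\varphi(e^p)+\tfrac14\varphi(h^p-h^{[p]})^2$, i.e.\ the naive generator corrected by terms in the $\mathfrak{a}$-direction. (Had your claim been true, part (1) would be a tautology --- a warning sign.) So the invariants cannot be read off generator-by-generator, and $Z_p(\tilde{\mathfrak{p}})^{\mathcal{M}}$ is \emph{not} the subalgebra $Z_p(\ggg_e)\subset Z_p(\tilde{\mathfrak{p}})$; what is true, and what \cite{GT} actually establishes, is that $Z_p(\tilde{\mathfrak{p}})^{\mathcal{M}}$ has algebraically independent generators whose Kazhdan leading terms are $\bar x_1^p,\cdots,\bar x_\sfd^p$ (this is exactly the fact $\text{gr}(Z_0(\ct))=\bbk[\bar x_1^p,\cdots,\bar x_\sfd^p]$ used in the proof of Lemma \ref{complete int}), obtained from the Frobenius twist of the isomorphism $\mathcal{M}\times\cs\xrightarrow{\sim}\chi+\mmm^\perp$ of \cite[Lemma 3.2]{Pre3}, \cite[Lemma 5.1]{GT}. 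With such corrected generators in hand, (1) follows because the restriction-to-slice map identifies the invariants with $\bbk[(\chi+\kappa(\ker(\ad\,f)))^{(1)}]$, a polynomial ring in $\sfd$ variables, and (2) follows by a leading-term/filtration argument for the multiplication map $Z_p(\tilde{\mathfrak{p}})^{\mathcal{M}}\otimes_\bbk Z_p(\mathfrak{a})\rightarrow Z_p(\tilde{\mathfrak{p}})$, not from any literal equality of the invariants with $Z_p(\ggg_e)$. Your geometric input (the identification $\bbk[\chi+\mmm^\perp]^{\mathcal{M}}\cong\bbk[\chi+\kappa(\ker(\ad\,f))]$) is the right one; the missing piece is the passage from that restriction isomorphism to explicit invariant lifts with controlled leading terms, which is where the real work in \cite{GT} lies and which your sketch replaces by an incorrect shortcut.
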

\noindent In particular, since $Z_0(\ggg_e)$ is a polynomial algebra in $\dim\ggg_e$ variables, Lemma \ref{pmge}(1) entails that $Z_0(\tilde{\mathfrak{p}})^{\mathcal{M}}$ also has this property.

In \cite[Theorem 2.1]{Pre3} and \cite[Remark 2.1]{Pre2}, Premet introduced the following transition property between finite $W$-algebras and their extended counterparts.
\begin{theorem} (\cite{Pre3, Pre2})\label{prem}
The following hold.
\begin{itemize}
\item[(1)] The algebra ${U}(\ggg,e)$ is generated by its subalgebras $\ct(\ggg,e)$ and $Z_0(\tilde{\mathfrak{p}})$;
\item[(2)] ${U}(\ggg,e)\cong \ct(\ggg,e)\otimes_\bbk Z_0(\mathfrak{a})$ as $\mathds{k}$-algebras;
\item[(3)] $U(\ggg,e)$ is a free  module over $Z_0(\tilde{\mathfrak{p}})$ of rank $p^{\dim\ggg_e}$;
\item[(4)] $\ct(\ggg,e)$ is a free  module over $Z_0(\tilde{\mathfrak{p}})^{\mathcal{M}}$ of rank $p^{\dim  \ggg_e}$.
\end{itemize}
\end{theorem}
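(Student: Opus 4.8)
The plan is to obtain (1) and (2) from an explicit PBW-type basis of the extended algebra $U(\ggg,e)$, to obtain (4) from a filtered-freeness argument, and to deduce (3) from (2) and (4) by faithfully flat base change; all four statements then follow. (These are Premet's \cite{Pre3,Pre2}, available in the present generality thanks to \cite{GT}; what follows is how I would organise the argument.) The starting point is the centrality of $\varphi(Z(\ggg))$ in $U(\ggg,e)=(\mathrm{End}_\ggg Q_\chi)^{\mathrm{op}}$: for $z\in Z(\ggg)$ the operator $\varphi(z)=l_z$ of left multiplication by $z$ on $Q_\chi=U(\ggg)/I_\chi$ commutes with every $\ggg$-module endomorphism, so $Z_p(\tilde{\mathfrak{p}})=\varphi(Z_p(\ggg))$ and its subalgebra $Z_p(\mathfrak{a})$ are central. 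In particular the multiplication map $\mu\colon\ct(\ggg,e)\otimes_\bbk Z_p(\mathfrak{a})\to U(\ggg,e)$ of (2) is a well-defined homomorphism of $\bbk$-algebras, so for (2) it remains to see that it is bijective.

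For (1) and (2) I would aim to show that $U(\ggg,e)\cong Q_\chi^{\mathrm{ad}\,\mmm}$ has the $\bbk$-basis $\{\Theta_1^{a_1}\cdots\Theta_\sfd^{a_\sfd}\,\omega_1^{c_1}\cdots\omega_N^{c_N}\mid\mathbf{a}\in\bbz_+^\sfd,\ \mathbf{c}\in\bbz_+^N\}$, where $\Theta_1,\dots,\Theta_\sfd$ generate $\ct(\ggg,e)$ and $\omega_j:=\varphi(X_j^p-X_j^{[p]})\in Z_p(\mathfrak{a})$, the $X_1,\dots,X_N$ ($N=\dim\mathfrak{a}$) being the homogeneous spanning set $z_1,\dots,z_s,x_{\sfd+1},\dots,x_m$ of $\mathfrak{a}$. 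Passing to the Kazhdan-associated graded, $\gr\Theta_i=x_i$ lands in $\gr\ct(\ggg,e)\cong S(\ggg_e)\subseteq\gr Q_\chi\cong S(\tilde{\mathfrak{p}})$ by \eqref{grse} and \eqref{generators}, while $\gr\omega_j=X_j^p$ in $S(\tilde{\mathfrak{p}})$: since $p$ exceeds all $\mathrm{ad}\,h$-weights, $X_j^{[p]}\in\ggg(0)$ (because $[\mathrm{ad}\,h,(\mathrm{ad}\,X_j)^p]=p\,n_j\,(\mathrm{ad}\,X_j)^p=0$ for $X_j\in\ggg(n_j)$), so $X_j^{[p]}$ has Kazhdan degree at most $2<p(n_j+2)$, the Kazhdan degree of $X_j^p$, using $n_j\geqslant-1$ on $\mathfrak{a}$. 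Hence the subalgebra generated by $\ct(\ggg,e)$ and $Z_p(\mathfrak{a})$ has graded containing $S(\ggg_e)\otimes_\bbk\bbk[X_1^p,\dots,X_N^p]$, and the content of the theorem is that this already exhausts $\gr U(\ggg,e)\subseteq S(\tilde{\mathfrak{p}})$, i.e.\ that the displayed monomials span and are linearly independent. I expect this to be the main obstacle: it requires a precise description of the $\mathrm{ad}\,\mmm$-fixed points of $Q_\chi$ in characteristic $p$, which is exactly the technical heart of \cite[Theorem 2.1]{Pre3} and \cite[Remark 2.1]{Pre2} (and of \cite[\S7--8]{GT}). The key auxiliary facts to be invoked there are that $Q_\chi$ is a free $Z_p(\tilde{\mathfrak{p}})$-module of rank $p^{\dim\tilde{\mathfrak{p}}}$ -- which follows from $\gr Q_\chi=S(\tilde{\mathfrak{p}})$ being free of that rank over its Frobenius subalgebra $\gr Z_p(\tilde{\mathfrak{p}})=\{f^p\mid f\in S(\tilde{\mathfrak{p}})\}$ (as $\gr(Y^p-Y^{[p]})=Y^p$ for every homogeneous $Y\in\tilde{\mathfrak{p}}$, by the same Kazhdan-degree computation) together with the standard lifting lemma for filtered algebras -- combined with a modular Skryabin-type argument controlling the $\mmm$-invariants. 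Granting the basis, (2) is immediate since $\mu$ carries the evident basis of the source bijectively onto it, and (1) follows at once: $\ct(\ggg,e)$ and $Z_p(\mathfrak{a})$ generate $U(\ggg,e)$, while by Lemma \ref{pmge}(2) and \S\ref{312} one has $Z_p(\tilde{\mathfrak{p}})=Z_p(\tilde{\mathfrak{p}})^{\mathcal{M}}\cdot Z_p(\mathfrak{a})$ with $Z_p(\tilde{\mathfrak{p}})^{\mathcal{M}}\subseteq\ct(\ggg,e)$, so $\ct(\ggg,e)$ and $Z_p(\tilde{\mathfrak{p}})$ generate $U(\ggg,e)$ as well.

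For (4), \eqref{grse} gives $\gr\ct(\ggg,e)\cong S(\ggg_e)$ and Lemma \ref{pmge}(1) gives $Z_p(\tilde{\mathfrak{p}})^{\mathcal{M}}\cong Z_p(\ggg_e)$, a polynomial algebra in $\dim\ggg_e$ variables whose Kazhdan-graded is the Frobenius subalgebra $\{f^p\mid f\in S(\ggg_e)\}$ of $S(\ggg_e)$; over the latter $S(\ggg_e)$ is free of rank $p^{\dim\ggg_e}$ with basis the monomials $x_1^{a_1}\cdots x_\sfd^{a_\sfd}$ ($0\leqslant a_i\leqslant p-1$), so the lifting lemma makes $\{\Theta_1^{a_1}\cdots\Theta_\sfd^{a_\sfd}\mid 0\leqslant a_i\leqslant p-1\}$ a free $Z_p(\tilde{\mathfrak{p}})^{\mathcal{M}}$-basis of $\ct(\ggg,e)$, which is (4). (Alternatively one could note that $\ct(\ggg,e)$ is a domain by Lemma \ref{Noe Pri}, is module-finite over the polynomial ring $Z_p(\tilde{\mathfrak{p}})^{\mathcal{M}}$, and has constant fibre dimension $p^{\dim\ggg_e}=\dim_\bbk\ct_\eta(\ggg,e)$.) Finally, for (3): by (2), $U(\ggg,e)\cong\ct(\ggg,e)\otimes_\bbk Z_p(\mathfrak{a})$, and since $Z_p(\tilde{\mathfrak{p}})=Z_p(\tilde{\mathfrak{p}})^{\mathcal{M}}\otimes_\bbk Z_p(\mathfrak{a})$ (Lemma \ref{pmge}(2)) acts through its first factor on $\ct(\ggg,e)$ and through its second factor on $Z_p(\mathfrak{a})$, this identifies $U(\ggg,e)$ with $\ct(\ggg,e)\otimes_{Z_p(\tilde{\mathfrak{p}})^{\mathcal{M}}}Z_p(\tilde{\mathfrak{p}})$; as $Z_p(\tilde{\mathfrak{p}})$ is free, hence faithfully flat, over $Z_p(\tilde{\mathfrak{p}})^{\mathcal{M}}$, the freeness of $\ct(\ggg,e)$ over $Z_p(\tilde{\mathfrak{p}})^{\mathcal{M}}$ of rank $p^{\dim\ggg_e}$ from (4) base-changes to the freeness of $U(\ggg,e)$ over $Z_p(\tilde{\mathfrak{p}})$ of the same rank, which is (3).
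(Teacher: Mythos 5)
The first thing to say is that the paper offers no proof of Theorem \ref{prem} at all: it is imported verbatim from \cite[Theorem 2.1]{Pre3} and \cite[Remark 2.1]{Pre2} (see also \cite[Theorem 8.4]{GT}), so there is no in-paper argument to compare yours against. Judged on its own terms, your outline does reproduce the architecture of the cited proofs — centrality of $\varphi(Z_p(\ggg))$ in $(\End_\ggg Q_\chi)^{\op}$, Kazhdan-filtration arguments with $\gr(Q_\chi)\cong S(\tilde{\mathfrak{p}})$ and $\gr(\ct(\ggg,e))\cong S(\ggg_e)$ as in \eqref{grse}, symbols of the $p$-central generators being $p$-th powers, and the base-change deduction of (3) from (2) and (4) via Lemma \ref{pmge}(2), which is correct as stated. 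But the decisive step is missing: the assertion that the monomials $\Theta_1^{a_1}\cdots\Theta_\sfd^{a_\sfd}\omega_1^{c_1}\cdots\omega_N^{c_N}$ span $U(\ggg,e)$, equivalently that $\gr U(\ggg,e)=S(\ggg_e)\otimes_\bbk\bbk[X_1^p,\dots,X_N^p]$ inside $S(\tilde{\mathfrak{p}})$, is exactly the content of statements (1)--(3), and you defer it (``modular Skryabin-type argument controlling the $\mmm$-invariants'') to the very references the theorem is quoted from. The same circularity affects (4): the identification $\gr(Z_p(\tilde{\mathfrak{p}})^{\mathcal{M}})=\bbk[\bar x_1^p,\dots,\bar x_\sfd^p]$ inside $S(\ggg_e)$ is not automatic from Lemma \ref{pmge}(1) (an abstract isomorphism with $Z_p(\ggg_e)$); it requires the explicit generators of the $p$-centre constructed in \cite[\S 8]{GT} and \cite[Remark 2.1]{Pre2}, which is what the paper itself invokes when it asserts this equality in the proof of Lemma \ref{complete int}. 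So the proposal is a reasonable reduction to the literature, but not an independent proof: its ``main obstacle'' is the theorem.

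Two smaller points. Your justification that $\gr\,\omega_j=X_j^p$ is right in conclusion but shaky in derivation: the computation $[\ad h,(\ad X_j)^p]=pn_j(\ad X_j)^p=0$ only shows $\ad([h,X_j^{[p]}])=0$, i.e.\ $[h,X_j^{[p]}]$ is central in $\ggg$, not that $X_j^{[p]}\in\ggg(0)$. The clean argument is $\Ad$-equivariance (or $\tau$-equivariance) of the $p$-mapping: $X_j\in\ggg(n_j)$ forces $X_j^{[p]}\in\ggg(pn_j)$, which vanishes unless $n_j=0$ because the grading is supported in a range of width bounded by the Coxeter number while $p$ is large; either way the Kazhdan degree of $X_j^{[p]}$ is at most $2<p(n_j+2)$, as you need. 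Also, the parenthetical alternative you offer for (4) (``domain, module-finite over a polynomial ring, constant fibre dimension'') would at best give projectivity plus rank data, not freeness, without an additional graded or Quillen–Suslin-type argument, so the filtered-lifting route you sketch first is the one to keep.
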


\subsection{The reduced $W$-algebras}
\subsubsection{}\label{251}
Given a linear function $\eta\in\chi+{\mmm}^\perp$, set the ${\ggg}$-module $Q_{\chi}^\eta:=Q_{\chi}/J_\eta Q_{\chi},$ where $J_\eta$ is the ideal of $U({\ggg})$ generated by all $\{x^p-x^{[p]}-\eta(x)^p\mid  x\in{\ggg}\}$. Evidently $Q_{\chi}^\eta$ is a ${\ggg}$-module~with $p$-character $\eta$, and
there exists a ${\ggg}$-module isomorphism
$
Q_{\chi}^\eta\cong U_\eta({\ggg})\otimes_{U_\eta({\mmm})}\bbk_\chi
$ by \cite[Lemma 2.2(i)]{Pre3}.
By  definition, the restriction of $\eta$ on $\mmm$ coincides with that of $\chi$ on ${\mmm}$.

\begin{defn}\label{reduced W}
Define a reduced extended $W$-algebra $U_\eta({\ggg},e)$ associated to $\ggg$ with $p$-character $\eta\in\chi+{\mmm}^\perp$ by $$U_\eta({\ggg},e):=(\text{End}_\ggg Q_\chi^\eta)^{\text{op}}.$$
\end{defn}

Next we will introduce other equivalent definitions of reduced extended  $W$-algebras.
In \cite[Theorem 2.2(b)]{Pre3}, Premet introduced the $\mathds{k}$-algebra
\begin{equation*}
U^\circ_\eta({\ggg},e):=U({\ggg},e)\otimes_{Z_0(\tilde{\mathfrak{p}})}\mathds{k}_\eta,
\end{equation*}
where $\mathds{k}_\eta:=\mathds{k}1_\eta$ is a one-dimensional $Z_0(\tilde{\mathfrak{p}})$-module such that $(x^p-x^{[p]}).1_\eta=\eta(x)^p1_\eta$ for all $x\in\tilde{\mathfrak{p}}$. Equivalently, if we let $H_\eta$ be the ideal of $U(\ggg,e)$ generated all $\varphi(x^p-x^{[p]}-\eta(x)^p)$ with $x\in\tilde{\mathfrak{p}}$, then
\begin{equation}\label{quo}
 U^\circ_\eta({\ggg},e)=U(\ggg,e)/H_\eta.
\end{equation}
Moreover, Premet also showed that the canonical projection
$Q_\chi\rightarrow Q_\chi^\eta=Q_{\chi}/J_\eta Q_{\chi}$
gives rise to an isomorphism between $\bbk$-algebras
\begin{equation}\label{reduced W3}
\psi_\eta:~U^\circ_{\eta}({\ggg},e)\cong U_\eta({\ggg},e).
\end{equation}

In \S\ref{312} we have defined
$Z_0(\tilde{\mathfrak{p}})^{\mathcal{M}}$ as  $\mathds{k}[(\chi+\kappa(\text{Ker}(\ad\,f)))^{(1)}]$.
For any $\eta\in\chi+\kappa(\text{Ker}(\ad\,f))$, we write $K_\eta$ for the corresponding maximal ideal of $(Z_0(\ggg)/I_p)^{\mathcal{M}}(\cong Z_0(\tilde{\mathfrak{p}})^{\mathcal{M}})$. In \cite[Definition 8.5]{GT}, Goodwin-Topley introduced the definition of reduced $W$-algebras as follows.
\begin{defn}\label{redu}
The reduced $W$-algebra $\ct_\eta({\ggg},e)$ associated to $\ggg$ with $p$-character $\eta\in\chi+{\mmm}^\perp$ is defined by \begin{equation*}
\ct_\eta({\ggg},e):=\ct({\ggg},e)/\ct({\ggg},e)K_\eta.
\end{equation*}\end{defn}
It is immediate that the category of $\ct_\eta(\ggg,e)$-modules can be naturally regarded as a subcategory of $\ct(\ggg,e)$-modules. The objects in the former are called $\ct(\ggg,e)$-modules of $p$-character $\eta$.
Moreover, Goodwin-Topley proved in \cite[Proposition 8.7]{GT} that
\begin{equation*}\label{reducediso}
U_\eta({\ggg},e)\cong\ct_\eta({\ggg},e)
\end{equation*}as $\mathds{k}$-algebras. Therefore, {\it we will take $\ct_\eta({\ggg},e)$ as the $\mathds{k}$-algebras $U_\eta({\ggg},e)$ and $U^\circ_\eta({\ggg},e)$ in the paper, and call them uniformly as the reduced $W$-algebras.}
\subsubsection{}\label{2.5.2}
Denote by $U_\eta(\ggg)$-mod and by $U_\eta(\ggg,e)$-mod the categories of finite-dimensional modules over $U_\eta(\ggg)$ and over $U_\eta(\ggg,e)$, respectively. We will establish an equivalence between them.
Let $N_{{\mmm}}$ denote the Jacobson radical of $U_\eta(\mmm)$, which is the left ideal of codimensional one in $U_\eta(\mmm)$ generated by all $\langle x-\eta(x)\mid x\in\mmm\rangle$, and write $I_{\mmm}:=U_\eta({\ggg})N_{\mmm}$.
Given a left $U_{\eta}({\ggg})$-module $M$, define
$$M^{\mmm}:=\{v\in M\mid I_{\mmm}.v=0\}.$$
Let $U_\eta({\ggg})^{\text{ad}\,{\mmm}}$ denotes the centralizer of $\mmm$ in $U_\eta({\ggg})$ under the adjoint action, then we have
\begin{lemma}\label{redum}
\begin{equation*}
U_{\eta}({\ggg},e)\cong(Q^\eta_\chi)^{\text{ad}\,{\mmm}}\cong U_\eta({\ggg})^{\text{ad}\,{\mmm}}/U_\eta({\ggg})^{\text{ad}\,{\mmm}}\cap I_{\mmm}.
\end{equation*}
\end{lemma}
\begin{proof}
The first isomorphism in the lemma is just \cite[Lemma 2.2 (ii)]{Pre3}, but the second one is not straightforward. We will give the detailed proof as below.

Since $\eta|_{\mmm}=\chi|_{\mmm}$ by definition, the same method as in the proof of \cite[Theorem 2.3(iv)]{Pre1} can be applied. Explicitly speaking, for any $U_\eta({\ggg})$-module $M$, let $\mathcal{V}_{\ggg}(M)$ be the support variety of $M$ consists $0$ and all those  $x\in\mathcal{N}_p(\ggg):=\{x^{[p]}=0\mid x\in\ggg\}$ for which $M$ is not a free $U_\eta(x)$-module, where $U_\eta(x)$ denotes the subalgebra with $1$ of $U_\eta({\ggg})$ generated by $x\in\ggg$.
One knows that $M$ is a projective $U_\eta({\ggg})$-module if and only if $\mathcal{V}_{\ggg}(M)=\{0\}$ (see \cite[Proposition 6.2]{FP}).

Let $E_1,\cdots,E_t$ be representatives of the isomorphism classes of simple $U_\eta({\ggg})$-modules, and define $\mathcal{V}_{\ggg}(\eta):=\bigcup_{i=1}^t\mathcal{V}_{\ggg}(E_i)$. 
Then the restricted subalgebra $\mmm$ of $\ggg$ is $\eta$-admissible in the meaning of \cite[Definition 2.3]{Pre1}. 
Combining this with \cite[Proposition 7.1(a)]{FP} and \cite[Proposition 2.2]{P0} we can observe that the support variety of the adjoint $U_0({\mmm})$-module
$U_\eta({\ggg})$ equals $\mathcal{V}_{\ggg}(U_\eta({\ggg})_{\text{ad}})\cap\mmm=\mathcal{V}_{\ggg}(\eta)\cap\mmm=\{0\}$. So the adjoint
$U_0({\mmm})$-module $U_\eta({\ggg})$ is projective, hence free (for the algebra $U_0({\mmm})$ is local). As $N_{\mmm}$ is a two-sided ideal of $U_\eta({\mmm})$ the left ideal $I_{\mmm}$ is $(\text{ad}\,\mmm)$-stable. It follows from \cite[Theorem 1.3]{Skry} that every finite-dimensional $U_\eta(\ggg)$-module is $U_\eta(\mmm)$-free, so is the left $U_\eta(\mmm)$-module $\bar U:=U_\eta({\ggg})/I_{\mmm}$. Given $x\in\mmm$ and $u\in U_\eta(\ggg)$ one has
\begin{equation*}
x(u+I_{\mmm})=(\eta(x)u+[x-\eta(x),u])+I_{\mmm}=(\eta(x)u+[x,u])+I_{\mmm}.
\end{equation*}
Then for any $x\in\mathcal{N}_p(\ggg)\cap\mmm$, the endomorphism $\text{ad}\,x$ acts on $\bar U$ as a direct sum of Jordan blocks of length $p$. This shows that the support variety of the adjoint $U_0(\mmm)$-module $\bar U$ is zero. So the adjoint $U_0(\mmm)$-module $\bar U$ is projective, hence free. Thus the short exact sequence of $(\text{ad}\,\mmm)$-modules
\begin{equation*}
0\rightarrow I_{\mmm}\rightarrow U_\eta(\ggg)\rightarrow\bar U\rightarrow0
\end{equation*}splits.
In other words, there is an ad\,$\mmm$-module $V\subseteq U_\eta(\ggg)$ such that $U_\eta(\ggg)\cong V\oplus I_{\mmm}$ as ad\,$\mmm$-modules.
It follows that
\begin{equation*}
B:=\{u\in U_\eta(\mmm)\mid I_{\mmm}u\subseteq I_{\mmm}\}=\{u\in U_\eta(\mmm)\mid[\mmm,u]\subseteq I_{\mmm}\}=V^{\mmm}\oplus I_{\mmm}.
\end{equation*}
This gives $B/I_{\mmm}\cong(Q^\eta_\chi)^{\text{ad}\,{\mmm}}\cong U_\eta({\ggg})^{\text{ad}\,{\mmm}}/ U_\eta({\ggg})^{\text{ad}\,{\mmm}}\cap I_{\mmm}$. Since $Q_\chi^\eta\cong U_\eta({\ggg})/I_{\mmm}$ as $U_\eta({\ggg})$-modules
we have $\text{End}_\ggg Q_\chi^\eta\cong(B/I_{\mmm})^{\text{op}}$ as algebras (see, e.g., \cite[Exercise 2.1.2(c)]{Pi}), which gives the desired result.
\end{proof}

Therefore, any left $U_{\eta}({\ggg},e)$-module can be considered as a $U_\eta({\ggg})^{\text{ad}\,{\mmm}}$-module with the trivial action of the ideal $U_\eta({\ggg})^{\text{ad}\,{\mmm}}\cap I_{{\mmm}}$. It follows from \cite[Theorem 1.3]{Skry} that every finite-dimensional $U_\eta(\ggg)$-module is $U_\eta(\mmm)$-free, and note that $U_\eta({\ggg})\cong\text{Mat}_{p^{\dim  \mathfrak{m}}}(U_{\eta}({\ggg},e))$ as $\bbk$-algebras by \cite[Lemma 2.2 (iii)]{Pre3}. Combining these with the fact that the restriction of $\eta$ coincides with that of $\chi$ on ${\mmm}$, by the same discussion as in \cite[Theorem 2.4]{Pre1} (the original result there is for the case with $\eta=\chi$)  we can conclude that
\begin{theorem}\label{Morita}
The functors
$$\vartheta:~U_\eta({\ggg})\text{-mod}\longrightarrow U_{\eta}({\ggg},e)\text{-mod},\qquad M\mapsto M^{{\mmm}},$$
and
$$\theta:~U_{\eta}({\ggg},e)\text{-mod}\longrightarrow U_\eta({\ggg})\text{-mod},\qquad V\mapsto U_\eta({\ggg})\otimes_{U_\eta({\ggg})^
{\text{ad}\,{\mmm}}}V,$$
are mutually inverse category equivalences.
\end{theorem}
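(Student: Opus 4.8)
The plan is to realise both functors through the $(U_\eta(\ggg),U_\eta(\ggg,e))$-bimodule $Q_\chi^\eta=U_\eta(\ggg)/I_\mmm$ and to invoke Morita theory, following Premet's proof of the modular Skryabin equivalence \cite[Theorem 2.4]{Pre1}, which handles the case $\eta=\chi$. First, for any $U_\eta(\ggg)$-module $M$, the induction--restriction adjunction along $U_\eta(\mmm)\hookrightarrow U_\eta(\ggg)$ together with \eqref{Qchieta} yields a natural isomorphism $\Hom_{U_\eta(\ggg)}(Q_\chi^\eta,M)\cong\Hom_{U_\eta(\mmm)}(\bbk_\chi,M)=M^\mmm$, the last equality using $\eta|_\mmm=\chi|_\mmm$ and the very definition of $M^\mmm$; hence $\vartheta=\Hom_{U_\eta(\ggg)}(Q_\chi^\eta,-)$. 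Second, Definition \ref{reduced W} is exactly the assertion $\big(\End_{U_\eta(\ggg)}Q_\chi^\eta\big)^{\op}=U_\eta(\ggg,e)$, and under the identification $U_\eta(\ggg,e)\cong U_\eta(\ggg)^{\ad\mmm}/\big(U_\eta(\ggg)^{\ad\mmm}\cap U_\eta(\ggg)N_\mmm\big)$ recalled above, this endomorphism action is realised by right multiplication by elements of $U_\eta(\ggg)^{\ad\mmm}$ on $Q_\chi^\eta$. Consequently, once $Q_\chi^\eta$ is known to be a projective generator for $U_\eta(\ggg)$, the Morita quasi-inverse of $\vartheta$ is $Q_\chi^\eta\otimes_{U_\eta(\ggg,e)}-$, and one identifies it with $\theta(V)=U_\eta(\ggg)\otimes_{U_\eta(\ggg)^{\ad\mmm}}V$ exactly as in \cite[Theorem 2.4]{Pre1}. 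So the whole statement reduces to: $Q_\chi^\eta$ is a projective generator over $U_\eta(\ggg)$.

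The generator property is straightforward. Since $\mmm$ is $p$-nilpotent and $\chi$ (hence $\eta$) vanishes on the $p$-closure of $[\mmm,\mmm]$, one checks in $U_\eta(\ggg)$ that $(x-\chi(x))^{p^k}=x^{[p]^k}$ for all $x\in\mmm$ (induction on $k$, using $\eta(x)=\chi(x)$ and $\chi(x^{[p]})=0$, the latter by $\ad h$-weight considerations together with $\chi([\mmm,\mmm])=0$); hence each $x-\chi(x)$ acts nilpotently on every $U_\eta(\ggg)$-module, and the operators $\{x-\chi(x)\mid x\in\mmm\}$ span a Lie algebra of nilpotent operators. By Engel's theorem, every nonzero $U_\eta(\ggg)$-module $M$ has $M^\mmm\neq 0$; equivalently $U_\eta(\mmm)$ is a local algebra whose only simple module is $\bbk_\chi$. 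In particular $\Hom_{U_\eta(\ggg)}(Q_\chi^\eta,S)=S^\mmm\neq 0$ for every simple $S$, so $Q_\chi^\eta$ surjects onto a copy of each indecomposable projective $U_\eta(\ggg)$-module.

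The heart of the matter --- and the only genuinely delicate point --- is that $Q_\chi^\eta$ is projective over $U_\eta(\ggg)$. Here the plan is to reproduce Skryabin's argument as presented in \cite[Theorem 2.3]{Pre1}: it relies only on the Kazhdan $\mathbb{G}_m$-action, the PBW-freeness of $U_\eta(\ggg)$ as a module over $U_\eta(\mmm)$, the $p$-nilpotence of $\mmm$, and the vanishing of $\chi$ on the $p$-closure of $[\mmm,\mmm]$. Since $\eta$ and $\chi$ coincide on $\mmm$, and since by \eqref{Qchieta} the module $Q_\chi^\eta$ depends on $\eta$ only through its $p$-character, none of these ingredients is disturbed by passing from the single character $\chi$ to an arbitrary $\eta\in\chi+\mmm^\perp$; the real content of the proof is thus the bookkeeping verification that every use of $\chi$ in \cite{Pre1} is either a use of $\chi|_\mmm$ or is insensitive to the choice within $\chi+\mmm^\perp$. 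Granting projectivity, $Q_\chi^\eta$ is a progenerator and the standard Morita correspondence shows that $\vartheta$ and $\theta$ are mutually inverse category equivalences, as claimed.
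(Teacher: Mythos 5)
Your proposal is correct and follows essentially the same route as the paper: the paper also proves this by observing that $\eta|_\mmm=\chi|_\mmm$ and then invoking the arguments of \cite[Theorems 2.3 and 2.4]{Pre1} (Skryabin/Premet), which is exactly your reduction to the projective-generator property of $Q_\chi^\eta$. Your added details (the adjunction identifying $\vartheta$ with $\Hom_{U_\eta(\ggg)}(Q_\chi^\eta,-)$ and the Engel-type argument for the generator property) are consistent with, and implicit in, the paper's citation of Premet's proof.
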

\noindent Moreover, from the detailed proof of \cite[Theorem 2.4]{Pre1} we see that
\begin{equation}\label{MMm}
\dim M=p^{\dim  \mathfrak{m}}\cdot\dim M^{{\mmm}}=p^{\frac{\dim G.e}{2}}\cdot\dim M^{{\mmm}}.
\end{equation}

As an immediate corollary of Theorem \ref{Morita}, we have
\begin{prop}\label{lessthan}  All irreducible modules of $\ct(\ggg,e)$ are finite-dimensional with dimension at most $p^{\frac{\dim \ggg-\text{rank}\,\ggg-\dim G.e}{2}}$.
\end{prop}
\begin{proof}
Recall that all irreducible modules over $\ggg$ are finite-dimensional with maximal dimension equaling  $M(\ggg)=p^{\frac{\dim \ggg-\text{rank}\,\ggg}{2}}$
(see \cite[Theorem 3]{Mi} or \cite[Theorem 4.4]{PS}), so the dimension of any irreducible $U_{\eta}(\ggg)$-module with $\eta\in\chi+\mathfrak{m}^\perp$ is at most $M(\ggg)$.
Since $U_\eta(\ggg,e)\cong\ct_\eta(\ggg,e)$ as $\mathds{k}$-algebras by \S\ref{251},
it follows from Theorem \ref{Morita} and \eqref{MMm} that the dimension of any irreducible $\ct_{\eta}(\ggg,e)$-module over $\mathds{k}$ is at most $p^{\frac{\dim \ggg-\text{rank}\,\ggg-\dim G.e}{2}}$.

On the other hand, since we have $\ct(\ggg,e)\cong Q_\chi^{\mathcal{M}}$ by \eqref{equiW}, and $\ct_\eta(\ggg,e)=\ct({\ggg},e)/\ct({\ggg},e)K_\eta$ by Definition \ref{redu},
then any irreducible $\ct(\ggg,e)$-module is a simple object of the $\ct_\eta(\ggg,e)$-module category for some $\eta\in\chi+\mathfrak{m}^\perp$. Now the proof is completed.
\end{proof}

\section{The fractional ring of $\ct(\ggg,e)$ over the fractional field of its center}\label{fractional of}
In this section we continue to investigate the ring theoretic  properties on the centers of the finite $W$-algebra $\ct(\ggg,e)$, and also the ones on the extended finite $W$-algebra $U(\ggg,e)$.

Maintain the notations and assumptions as before. In particular, we list some special important conventions as below.

$\bullet$ The finite $W$-algebra $\ct(\ggg,e)$ will be simply written as  $\ct$ sometimes. In particular, this simplified notation will always appear in the context related to the centers as below.

$\bullet$ The $p$-center $Z_0(\tilde{\mathfrak{p}})^{\mathcal{M}}$ of $\ct$ (see Definition \ref{pcenterW}) will be written as  $Z_0(\ct)$ for convenience.

$\bullet$ The center of $\ct$ will be denoted by  $Z(\ct)$ (Keep in mind the fact that  $Z(\ct)$ is  an integral domain by Lemma \ref{Noe Pri}(3)).

$\bullet$ Set $\bbf_0:=\text{Frac}(Z_0(\ct))$ and $\bbf:=\Frac(Z(\ct))$ the fractional field of $Z_0(\ct)$ and $Z(\ct)$, respectively.

$\bullet$ Set the fractional ring $$Q(\ct):=\ct(\ggg,e)\otimes_{Z(\ct)}\bbf.$$

$\bullet$ Write $\ell:={n-r-\dim G.e}$, where $n=\dim\ggg$ and $r=\rank\,\ggg$.

\subsection{$Z(\ct)$ and $Z_0(\ct)$}\label{centers of ct and its subalgebras}
We first have the following observation:
\begin{prop}\label{PI and Central S}
The following are true:
\begin{itemize}
\item[(1)] the ring $Z_0(\ct)$ is  Noetherian;
\item[(2)] the ring $Z(\ct)$ is integrally closed;
\item[(3)] the ring extension $Z(\ct)/Z_0(\ct)$ is integral, and $Z(\ct)$ is finitely-generated as a $Z_0(\ct)$-module. In particular, $Z(\ct)$ is a finitely-generated commutative algebra over $\bbk$;
\item[(4)]$\ct(\ggg,e)$ is a PI ring.
\end{itemize}
\end{prop}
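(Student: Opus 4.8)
The plan is to read off all four assertions from the structural facts already in place --- \eqref{grse}, Lemma~\ref{pmge} and Theorem~\ref{prem} --- together with one input from the theory of filtered rings.

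Parts (1), (3) and (4) should be quick. For (1), Lemma~\ref{pmge}(1) identifies $Z_0(\ct)=Z_p(\tilde{\mathfrak{p}})^{\mathcal{M}}$ with $Z_p(\ggg_e)$, a polynomial algebra in $\dim\ggg_e$ variables, hence Noetherian and finitely generated over $\bbk$. Next one observes $Z_0(\ct)\subseteq Z(\ct)$: by \eqref{twoequation} $Z_0(\ct)=Z_p(\tilde{\mathfrak{p}})\cap\ct(\ggg,e)$, and $Z_p(\tilde{\mathfrak{p}})$ is central in $U(\ggg,e)\supseteq\ct(\ggg,e)$. By Theorem~\ref{prem}(4), $\ct(\ggg,e)$ is a free --- hence finitely generated --- module over $Z_0(\ct)$; since $Z_0(\ct)$ is Noetherian, $\ct(\ggg,e)$ is a Noetherian $Z_0(\ct)$-module, so its $Z_0(\ct)$-submodule $Z(\ct)$ is a finitely generated $Z_0(\ct)$-module. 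A module-finite extension is integral, which is the first half of (3); and $Z(\ct)$, being module-finite over the finitely generated $\bbk$-algebra $Z_0(\ct)$, is itself a finitely generated $\bbk$-algebra, completing (3). For (4), $\ct(\ggg,e)$ is module-finite over its central subalgebra $Z_0(\ct)$, and any ring that is finitely generated as a module over a commutative central subring satisfies a polynomial identity; thus $\ct(\ggg,e)$ is PI.

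The one substantial point is (2), for which I would pass to the associated graded. By \eqref{grse} the Kazhdan-graded algebra $\text{gr}(\ct(\ggg,e))$ is $S(\ggg_e)$, a polynomial algebra and in particular a commutative Noetherian normal domain, while the Kazhdan filtration on $\ct(\ggg,e)$ is exhaustive and concentrated in non-negative degrees. A standard lifting principle for such filtered rings --- whose associated graded is a commutative Noetherian normal domain --- then shows that $\ct(\ggg,e)$, a Noetherian domain by Lemma~\ref{Noe Pri}, is a maximal order in its division ring of fractions, and in particular is integrally closed there; since $\ct(\ggg,e)$ is a prime PI domain, this division ring of fractions is exactly the central localisation $Q(\ct)$, whose centre is $\Frac(Z(\ct))$. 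Now if $t\in\Frac(Z(\ct))$ is integral over $Z(\ct)$, then $t$ is integral over $\ct(\ggg,e)$ and lies in $Q(\ct)$; integral closedness forces $t\in\ct(\ggg,e)$, and centrality of $t$ in $Q(\ct)$ gives $t\in Z(\ct)$. Hence $Z(\ct)$ is integrally closed.

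The main obstacle is this last step: one must pin down the precise ``$\text{gr}$ normal $\Rightarrow$ maximal order / integrally closed in the quotient ring'' statement and check that it is valid in characteristic $p$, which requires some care with the noncommutative localisation (Goldie's theorem applies because $\ct(\ggg,e)$ is a Noetherian domain, and for a prime PI ring the Goldie quotient ring agrees with $Q(\ct)$ by Posner's theorem) and with the identification of $\Frac(Z(\ct))$ with the centre of $Q(\ct)$. Everything else is bookkeeping on top of Lemma~\ref{pmge} and Theorem~\ref{prem}.
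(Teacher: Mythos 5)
Your parts (1), (3) and (4) follow the paper essentially verbatim: Lemma \ref{pmge}(1) plus \cite[Corollary 7.6]{AM} for (1), Theorem \ref{prem}(4) plus the determinant/Cayley--Hamilton trick for (3), and module-finiteness over the central subalgebra $Z_0(\ct)$ for (4) (the paper cites \cite[Corollary 13.1.13(iii)]{MR}). Where you genuinely diverge is part (2). The paper does not invoke the maximal-order machinery; it adapts the Strade--Farnsteiner argument for centers of enveloping algebras: given $x$ in the integral closure of $Z(\ct)$ in $\Frac(Z(\ct))$, it forms the order $B=\sum_{i<m}\ct(\ggg,e)x^i$, squeezed between $\ct(\ggg,e)$ and $\tfrac1z\ct(\ggg,e)$, and proves $B=\ct(\ggg,e)$ by an explicit induction on the Kazhdan filtration using only that $\gr(\ct(\ggg,e))\cong S(\ggg_e)$ is a UFD (from $\bar z^{\,n-1}\mid\bar a^{\,n}$ for all $n$ one gets $\bar z\mid\bar a$, and one descends through the filtration). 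Your route instead cites the general lifting principle ($\gr$ a commutative Noetherian normal domain $\Rightarrow$ the filtered ring is a maximal order in its Goldie quotient, which by Posner equals $Q(\ct)$ with centre $\Frac(Z(\ct))$); this is legitimate and characteristic-free, since the Kazhdan filtration is positive and exhaustive with $\gr(\ct(\ggg,e))\cong S(\ggg_e)$, and such results are standard (Chamarie-type theorems, cf. \cite[\S5.1]{MR}). One caution: a noncommutative maximal order need not contain \emph{every} element of its quotient ring integral over it, so the phrase ``in particular is integrally closed there'' is too strong as stated; what saves your argument is that the element $t$ is \emph{central}, so $\ct(\ggg,e)[t]=\sum_{i<m}\ct(\ggg,e)t^i$ is an equivalent order containing $\ct(\ggg,e)$ (equivalently, one may quote that the centre of a maximal order is integrally closed), and maximality then gives $t\in\ct(\ggg,e)$, hence $t\in Z(\ct)$. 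In effect the paper's Steps 1--3 are a self-contained, hands-on verification of exactly this equivalent-order collapse, so the two approaches are morally the same, with yours trading the explicit filtration induction for citations to the maximal-order literature.
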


\begin{proof}
(1) By Lemma \ref{pmge}(1) we know that
$Z_0(\ct)\cong Z_0(\ggg_e)$ as $\mathds{k}$-algebras, then $Z_0(\ct)$ is isomorphic to a polynomial algebra in $\dim \ggg_e$ variables,
which is a Noetherian ring.

(2) Generally speaking, the proof can be carried out by exploiting \cite[\S6, Propositions 5.3-5.4]{SF} which  contributes to the centers of Lie algebras. We give some details below for the convenience of the readers.

(Step 1)
Let $x$ be contained in the integral closure of $Z(\ct)$ in $\bbf$. Then the element $x$ satisfies an equation
$$x^m+k_{m-1}x^{m-1}+\cdots+k_0=0,$$where $k_i\in Z(\ct)$ for $0\leqslant i\leqslant m-1$. Set $\mathbf{B}:=\sum_{i=0}^{m-1}\ct(\ggg,e)x^i$. Note that $x$ is in the center of $Q(\ct)$. So $\mathbf{B}$ is a sub-ring of the fraction ring $Q(\ct)$ of $\ct(\ggg,e)$. Write $x=\frac{r}{s}$ with $r\in Z(\ct)$ and $s\in Z(\ct)\backslash\{0\}$. Then $\ct(\ggg,e)\subseteq \mathbf{B}\subseteq\frac{1}{z}\ct(\ggg,e)$, where $z:=s^m\in Z(\ct)$.

(Step 2) Now we  prove $\mathbf{B}=\ct(\ggg,e)$.
Recall that the Kazhdan filtration on $\ct({\ggg},e)$ is defined by
$$\text{F}_0\ct({\ggg},e)\subseteq \text{F}_{1}\ct({\ggg},e)\subseteq\cdots$$ with $\text{F}_0\ct({\ggg},e)=\mathds{k}$. We claim that \begin{equation}\label{zbu}
(z\mathbf{B})\cap\text{F}_k\ct({\ggg},e)=(z\ct(\ggg,e))\cap\text{F}_k\ct({\ggg},e)
\end{equation} for $k\geqslant0$ and prove it by induction on $k$.

As $\text{F}_0\ct({\ggg},e)=\mathds{k}$, it is obvious that \eqref{zbu} holds for the case with $k=0$.
Now let $a\in(z\mathbf{B})\cap\text{F}_{k+1}\ct({\ggg},e)$. For $n\geqslant1$, we have $a^n\in z^n\mathbf{B}\subseteq z^{n-1}\ct(\ggg,e)$. Then there exists $u_n\in\ct(\ggg,e)$ such that $a^n=z^{n-1}u_n$. Under the Kazhdan grading we can choose $\bar a, \bar z, \bar u_n$ in $\text{gr}(\ct(\ggg,e))$ such that $\text{gr}(a)=\bar a, \text{gr}(z)=\bar z$ and $\text{gr}(u_n)=\bar u_n$, respectively.
Recall in Lemma \ref{Noe Pri}(1) we showed that $\text{gr}(\ct(\ggg,e))$ is a unique factorization domain.  Let $q$ be a prime factor of $\bar z$. Then there are $l, t\in\mathbb{Z}_+$ such that $q^l\mid \bar z$, $q^{l+1} \nmid \bar z$, $q^t\mid \bar a$, $q^{t+1} \nmid \bar a$.
As $\bar z^{n-1}\mid\bar a^n$, we have  $(n-1)l\leqslant nt$ for all $n$, which yields $t\geqslant l$. This shows that $\bar z$ divides $\bar a$. Therefore, there exists $v\in\ct(\ggg,e)$ such that $\bar a=\bar z\bar v$, whence $a-zv\in\text{F}_k\ct(\ggg,e)$. Since $\ct(\ggg,e)\subseteq \mathbf{B}$ and $a\in z\mathbf{B}$, we have $a-zv\in\text{F}_k\ct(\ggg,e)\cap(z\mathbf{B})$. The induction hypothesis implies that $a-zv\in\text{F}_k\ct(\ggg,e)\cap(z\ct(\ggg,e))$. Hence $a\in\text{F}_{k+1}\ct(\ggg,e)\cap(z\ct(\ggg,e))$, as desired.

(Step 3) It follows from \eqref{zbu} that $z\ct(\ggg,e)=z\mathbf{B}$. By Lemma \ref{Noe Pri}(3), $\ct(\ggg,e)$ has no zero divisors, and we  obtain $x\in \mathbf{B}\subseteq\ct(\ggg,e)$. It follows that $\mathbf{B}=\ct(\ggg,e)$.

(3) As $Z_0(\ct)$ is a Noetherian ring by Statement (1) and  $\ct(\ggg,e)$ is a finitely-generated $Z_0(\ct)$-module by  Theorem \ref{prem}(4),  $\ct(\ggg,e)$ is a Noetherian $Z_0(\ct)$-module. Since $Z(\ct)$ is a $Z_0(\ct)$-submodule in $\ct(\ggg,e)$,  $Z(\ct)$ is also finitely-generated as a $Z_0(\ct)$-module.
Then by a standard argument as in \cite[Proposition 2.4]{AM}, one can easily conclude that $Z(\ct)$ is integral over $Z_0(\ct)$.
In  (1) we showed that $Z_0(\ct)$
is isomorphic to a polynomial algebra in $\dim\ggg_e$ variables, and also $Z(\ct)$ is finitely-generated as a $Z_0(\ct)$-module, then $Z(\ct)$ is a finitely-generated commutative algebra over $\bbk$.

(4) Since  $\ct(\ggg,e)$ is a free  module over $Z_0(\ct)$ by Theorem \ref{prem}(4), \cite[Corollary 13.1.13(iii)]{MR} implies that $\ct(\ggg,e)$ is a PI ring.
\end{proof}

\subsection{On the structure of $Q(\ct)$}
We first have a theory for $Q(\ct)$  parallel to the modular Lie algebra case (cf. \cite{Za}).
\begin{prop}\label{centersimple}
The following statements hold.
\begin{itemize}
\item[(1)] $Q(\ct)\cong\ct(\ggg,e)\otimes_{Z_0(\ct)}\bbf_0$;
\item[(2)] $Q(\ct)$ is a division algebra;
\item[(3)] $Q(\ct)$ is a finite-dimensional and central simple algebra over $\bbf$;
\item[(4)] the dimension of $Q(\ct)$ over $\bbf$ is equal to $p^{2m}$ for some $m\in \bbz_+$ such that $2m\geqslant \ell$. Here as before  $\ell={n-r-\dim G.e}$ with $n=\dim\ggg$ and $r=\rank\,\ggg$.
\end{itemize}
\end{prop}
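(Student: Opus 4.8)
The plan is to derive parts (1)--(3) from standard facts about localization of a ring that is module-finite over an affine centre, and to obtain (4) from a rank count combined with the Morita equivalence of Theorem~\ref{Morita}.

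For (1): by Proposition~\ref{PI and Central S}(3) the ring $Z(\ct)$ is a domain which is integral over $Z_0(\ct)$. Given $0\neq z\in Z(\ct)$, cancelling the highest power of $z$ from an integral equation (legitimate since $Z(\ct)$ is a domain, by Lemma~\ref{Noe Pri}) yields a monic relation over $Z_0(\ct)$ with nonzero constant term, so $z$ is already invertible in $\ct(\ggg,e)\otimes_{Z_0(\ct)}\bbf_0$. Hence the canonical map $\ct(\ggg,e)\otimes_{Z_0(\ct)}\bbf_0\to\ct(\ggg,e)\otimes_{Z(\ct)}\bbf=Q(\ct)$ is an isomorphism. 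For (2): Theorem~\ref{prem}(4) shows $\ct(\ggg,e)$ is module-finite over $Z_0(\ct)\subseteq Z(\ct)$, hence module-finite over $Z(\ct)$, so $Q(\ct)$ is finite-dimensional over $\bbf$; and since $\ct(\ggg,e)$ is a domain (Lemma~\ref{Noe Pri}) while localizing at the central multiplicative set $Z(\ct)\setminus\{0\}$ preserves this, $Q(\ct)$ is a finite-dimensional $\bbf$-algebra without zero divisors, hence a division algebra (left multiplication by a nonzero element is an injective, therefore bijective, $\bbf$-linear map). Equivalently, apply Posner's theorem to the prime PI ring $\ct(\ggg,e)$ of Lemma~\ref{Noe Pri} and Proposition~\ref{PI and Central S}(4). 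For (3): a division algebra is simple, and for a domain $R$ module-finite over its centre $Z$ one verifies directly that $Z(S^{-1}R)=S^{-1}Z$ for a central multiplicative set $S$ --- if $r/s$ is central then $t(rx-xr)=0$ for some $t\in S$, so $rx=xr$ since $R$ is a domain. With $R=\ct(\ggg,e)$ and $S=Z(\ct)\setminus\{0\}$ this gives $Z(Q(\ct))=\bbf$, so $Q(\ct)$ is central simple over $\bbf$.

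For (4): combining (1) with the freeness in Theorem~\ref{prem}(4) gives $\dim_{\bbf_0}Q(\ct)=p^{\dim\ggg_e}$, hence $p^{\dim\ggg_e}=[\bbf:\bbf_0]\cdot\dim_{\bbf}Q(\ct)$. By Posner's theorem $\dim_{\bbf}Q(\ct)$ is the square of the PI-degree of $\ct(\ggg,e)$; since the product above is a power of $p$, so are both factors, so the PI-degree equals $p^m$ for some $m\in\bbz_+$ and $\dim_{\bbf}Q(\ct)=p^{2m}$. Using $\dim\ggg_e=n-\dim G.e$ we get $[\bbf:\bbf_0]=p^{\dim\ggg_e-2m}$, so the assertion $2m\geq\ell=n-r-\dim G.e$ is equivalent to $[\bbf:\bbf_0]\leq p^r$, and (since every irreducible module of the affine PI algebra $\ct(\ggg,e)$ has dimension at most its PI-degree $p^m$) also equivalent to the existence of an irreducible $\ct(\ggg,e)$-module of dimension at least $p^{\ell/2}$. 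I would produce such a module by passing to a generic $\eta\in\chi+\mmm^\perp$: under $\kappa$ the affine subspace $\chi+\mmm^\perp$ corresponds to $e+\Omega_\mmm$, which contains the Slodowy slice $e+\ker(\ad\,f)$ and hence meets the regular locus of $\ggg$ densely, so for generic $\eta$ one has $\dim G.\eta=n-r$ and $U_\eta(\ggg)$ carries an irreducible module $M$ with $\dim M=p^{\frac{n-r}{2}}$. By the Morita equivalence of Theorem~\ref{Morita} together with \eqref{MMm}, $M^{\mmm}$ is an irreducible $\ct_\eta(\ggg,e)$-module, hence an irreducible $\ct(\ggg,e)$-module, of dimension $p^{\frac{n-r}{2}}/p^{\frac{\dim G.e}{2}}=p^{\ell/2}$. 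Therefore $p^m\geq p^{\ell/2}$, i.e.\ $2m\geq\ell$, as required.

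The module-finiteness and primeness ingredients above (Lemma~\ref{Noe Pri}, Proposition~\ref{PI and Central S}, Theorem~\ref{prem}(4)) make (1)--(3) and the $p^{2m}$-shape in (4) essentially formal, so I expect the real obstacle to be the inequality $2m\geq\ell$. There one must argue carefully that a generic $\eta\in\chi+\mmm^\perp$ is regular --- which rests on the inclusion $\ker(\ad\,f)\subseteq\Omega_\mmm$ (a short weight computation using that the form pairs $\ggg(i)$ with $\ggg(-i)$) and on the classical fact that the Slodowy slice, or the good transverse slice of Remark~\ref{remnormal} in general, meets the regular locus in a dense open set --- and that a reduced enveloping algebra $U_\eta(\ggg)$ at a regular $p$-character does admit an irreducible module of dimension $p^{\frac{n-r}{2}}$ (a consequence of the Kac--Weisfeiler theory, e.g.\ baby Verma modules at regular semisimple characters), and then carry this through Theorem~\ref{Morita}, tracking dimensions via \eqref{MMm} and noting that $n-r$ and $\dim G.e$ are both even so that $p^{\ell/2}$ is an integer power of $p$.
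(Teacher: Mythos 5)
Your parts (1)--(3) are essentially the paper's argument: the same inversion trick from a monic integral equation with nonzero constant term gives (1), and for (2)--(3) you substitute direct reasoning (a finite-dimensional $\bbf$-algebra without zero divisors is a division algebra; the center of a central localization of a domain is the localized center) where the paper instead invokes integrality of $\ct(\ggg,e)$ over $Z(\ct)$ and Posner's theorem -- interchangeable. The genuine divergence is in (4), where both halves of $2m\geqslant\ell$ are obtained differently. For the upper bound you quote the standard PI fact that irreducible modules of the prime affine PI algebra $\ct(\ggg,e)$ have dimension at most its PI-degree $p^m$; the paper proves exactly this bound by hand: Jacobson density gives $\varrho(\ct(\ggg,e))=\End_\bbk(V_\ct)$, the integral closedness of $Z(\ct)$ (Proposition \ref{PI and Central S}(2)) forces the minimal polynomial over $\bbf$ of any element of $\ct(\ggg,e)$ to have coefficients in $Z(\ct)$, and evaluating on a regular nilpotent matrix yields $\dim V_\ct\leqslant p^m$. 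For the lower bound you manufacture an irreducible module of dimension $p^{\ell/2}$ from a generic (regular, e.g.\ regular semisimple) $\eta\in\chi+\mmm^\perp$, which needs the fact that the Slodowy (or good transverse) slice meets the regular locus -- established in the paper only in \S\ref{Azumaya locus} via Lemmas \ref{Tran sli} and \ref{Tran gen sli}, so no circularity but a heavier input -- together with Kac--Weisfeiler divisibility or simplicity of baby Verma modules at regular semisimple characters; the paper instead takes the baby Verma module $V_\eta(-\rho)$, which by \cite[Corollary 3.11]{BGo} is irreducible of dimension $p^{(n-r)/2}$ for \emph{every} $\eta\in\chi+\mmm^\perp$, so no genericity or regularity is needed and the argument works already at $\eta=\chi$. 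Both routes then pass identically through Theorem \ref{Morita} and \eqref{MMm}. In short, your version buys economy (standard PI theory replaces the density/minimal-polynomial computation, and Proposition \ref{PI and Central S}(2) becomes unnecessary at this point), at the price of stronger external inputs for the existence of the large irreducible module, which the paper's choice of the $-\rho$ baby Verma avoids.
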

\begin{proof} The arguments are the same in the spirit as in \cite{Za} for modular Lie algebras. We represent them for modular finite $W$-algebra.

(1) Recall  that $Z(\ct)/Z_0(\ct)$ is an integral extension by Proposition \ref{PI and Central S}(3). Let $x\in Z(\ct)$ satisfy an equation
$$x^m+k_{m-1}x^{m-1}+\cdots+k_0=0,$$
where $k_i\in Z_0(\ct)$ for $0\leqslant i\leqslant m-1$. As Lemma \ref{Noe Pri}(3) entails that $\ct(\ggg,e)$ has no zero divisors and $Z(\ct)\subseteq\ct(\ggg,e)$, we may assume that $k_0\neq0$. Then $x$ is invertible in $(Z_0(\ct)\backslash\{0\})^{-1}Z(\ct)$ with inverse $x^{-1}=-k_0^{-1}(x^{m-1}+k_{m-1}x^{m-2}+\cdots+k_{1})$. Since any element in $Q(\ct)$ can be written as $\frac{t}{s}$ with $t\in\ct(\ggg,e)$ and $s\in Z(\ct)\backslash\{0\}$,  Statement (1) follows.

(2) Note that $\ct(\ggg,e)$ is finitely-generated over $Z_0(\ct)$ by  Theorem \ref{prem}(4), and $Z_0(\ct)\subseteq Z(\ct)$ by definition. Thus, $\ct(\ggg,e)$ is also finitely-generated over $Z(\ct)$. By the same discussion as in Proposition \ref{PI and Central S}(3), one can conclude that $\ct(\ggg,e)/Z(\ct)$ is an integral extension. Applying the considerations noted in the proof for Statement (1), one   concludes that $Q(\ct)$ is a division algebra.

(3) As $\ct(\ggg,e)$ is finitely-generated over  
$Z(\ct)$, it is straightforward that $Q(\ct)$ is finite-dimensional over $\bbf$. In virtue of Lemma \ref{Noe Pri}(3) and Proposition \ref{PI and Central S}(4), Posner's Theorem \cite[Theorem 13.6.5]{MR} shows that $Q(\ct)$ is a central simple algebra over $\bbf$.

(4) Recall that $\ct(\ggg,e)$ is a free module over $Z_0(\ct)$ of rank $p^{\dim\ggg_e}$ by Theorem \ref{prem}(4). Consequently, $Q(\ct)$ is an $\bbf_0$-vector space of dimension $p^{\dim\ggg_e}$ and we have  $$p^{\dim\ggg_e}=[Q(\ct):\bbf_0]=[\bbf:\bbf_0]\cdot[Q(\ct):\bbf].$$
This shows that $\dim_{\bbf} Q(\ct)$ is a power of $p$.
Now we already know that $Q(\ct)$ is a central simple algebra over $\bbf$, thus
this power is an even number.
Set $\dim_{\bbf} Q(\ct)=p^{2m}$ for some $m\in\mathbb{Z}_+$. In the following we will show that $2m\geqslant\ell$, and  the proof is divided into a couple of steps.

(Step 1) For $\eta\in \chi+\mmm^\perp$, there exists $g\in G$ such that $(g.\eta)(\mathfrak{n}^+)=0$ (see \cite[Lemma 6.6]{Jan2}). Set $\eta'=g.\eta$ with Jordan-Chevalley decomposition $\eta'=\eta'_s+\eta'_n$. Then $\eta'_s(\nnn^-)=0$, $\eta'_n(\hhh)=0$ and  the centralizer $\ggg_{\eta'_s}$ contains $\hhh$. There is a projection
$\pi: \hhh^*\rightarrow ([\ggg_{\eta'_s},\ggg_{\eta'_s}])\cap \hhh)^*$.
Consider the baby Verma module $V_\eta(\lambda)$ in $U_{g.\eta}(\ggg)$-modules  defined to be
$$V_\eta(\lambda)=U_{g.\eta}(\ggg)\otimes_{U_{g.\eta}(\bbb)}\bbk_{\lambda},$$
where $\bbb$ is a Borel subalgebra of $\ggg$ associated to the root system $\Phi$, and $\lambda\in \pi^{-1}(\pi(-\rho))$ satisfies that $\forall H\in\hhh$, $\lambda(H)^p-\lambda(H^{[p]})=\eta_s'(H)^p$ with $\rho$ being half sum of the positive roots $\Phi^+$ of $\ggg$.
According to  \cite[Proposition 3.14]{BGo}, $V_\eta(\lambda)$ is an irreducible  $U_{g.\eta}(\ggg)$-module and whose dimension over $\mathds{k}$ equals $p^{{1\over 2}(\dim\ggg-\rank\,\ggg)}$. Since $U_{\eta}(\ggg)\cong U_{g.\eta}(\ggg)$ as $\mathds{k}$-algebras, $V_\eta(\lambda)$ can also be considered as a $U_{\eta}(\ggg)$-module.
 By Theorem \ref{Morita} there are mutually inverse category equivalences from $U_\eta(\ggg)$-modules     to $U_\eta(\ggg,e)$-modules.
As the discussion in \S\ref{251} shows that $U_\eta({\ggg},e)\cong\ct_\eta({\ggg},e)$ as $\mathds{k}$-algebras (we shall identify $\ct_\eta({\ggg},e)$ with $U_\eta({\ggg},e)$ below),  it follows from \eqref{MMm} that $\vartheta(V_\eta(\lambda))$ is an irreducible $\ct_\eta(\ggg,e)$-module  with $\dim\vartheta(V_\eta(\lambda))=p^\frac{\ell}{2}$.

Let us introduce a canonical algebra homomorphism $\Theta$ from $\ct(\ggg,e)$ to
$\ct(\ggg,e)\slash(z-\Theta_{(\eta,\lambda)}(z))$, where
$(z-\Theta_{(\eta,\lambda)}(z))$ is the ideal of $\ct(\ggg,e)$
generated by $z-\Theta_{(\eta,\lambda)}(z)$ with $z$ taking all through the center $Z(\ct)$ of $\ct(\ggg,e)$, and
$\Theta_{(\eta,\lambda)}(z)\in\bbk$ is defined by $z.v=\Theta_{(\eta,\lambda)}(z)v$ for $v\in \vartheta(V_\eta(\lambda))$.
Note that the category of
$\Theta(\ct(\ggg,e))$-modules is a subcategory of
$\ct_{\eta}(\ggg,e)$-modules (see Remark \ref{Central Character} below), and $\ct_\eta({\ggg},e)$ is the reduced algebra of $\ct({\ggg},e)$ by definition. Then we can regard $\vartheta(V_\eta(\lambda))$ as an irreducible $\ct(\ggg,e)$-module, which will be denoted by $V_\ct$. Set $\varrho$ to be the corresponding map. By Jacobson's density theorem, $
\varrho(\ct(\ggg,e))=\End_\bbk(V_\ct)
$ (see \cite[\S4.3]{Jaco}).

(Step 2)
In virtue of (2) and (3), $Q(\ct)$ is a finite-dimensional division $\bbf$-algebra. Then for any $u\in \ct(\ggg,e)$ we have a subfield $\bbf(u)$ of $Q(\ct)$, which actually coincides with $\bbf[u]$. By general theory, $\bbf[u]$ has degree $\leqslant p^m$ over $\bbf$ (see e.g., \cite[Theorem 4.5.1]{DV}). According to Proposition \ref{PI and Central S}(2),
$Z(\ct)$ is integrally closed.  An application of \cite[\S6, Lemma 5.2(2)]{SF} to $\bbf[u]$ entails that the minimal polynomial $f_u$ of $u$ in $\bbf[\tau]$ lies in $Z(\ct)[\tau]$. Since $V_\ct$ is irreducible and $\mathds{k}$ is algebraically closed, we have $\varrho(Z(\ct))=\mathds{k}\,\text{id}_{V_\ct}$. Hence $\varrho(u)$ satisfies a polynomial of standard degree $\text{deg}_S(f_u)\leqslant p^m$ over $\mathds{k}$.




(Step 3) We claim that $\dim V_\ct\leqslant p^m$.
In (Step 1) we showed that
 $\varrho(\ct(\ggg,e))=\End_\bbk(V_\ct)$.
So there must be  an element $u\in \ct(\ggg,e)$ such that $\varrho(u)\in\End_\bbk(V_\ct)$ is equal to the standard regular nilpotent matrix
\begin{equation*}
\left( \begin{array}{ccccc}
 0 & 1 & 0 &\cdots & 0 \cr
0 &0 &1 &\cdots &0 \cr
0&0&0&\cdots &0 \cr
\vdots&\vdots&\vdots&\vdots&\vdots\cr
0&0&0&\cdots &1\cr
0&0&0&\cdots &0
\end{array}\right).
\end{equation*}
So the degree of minimal polynomial of $\varrho(u)$ over $\bbk$ is $\dim V_\ct=p^{\frac{\ell}{2}}$. Using (Step 2), we finally obtain that $p^{\frac{\ell}{2}}\leqslant p^m$. Thus we have  $2m\geqslant\ell$, completing the proof of Statement (4).
\end{proof}
\begin{remark}\label{Central Character} Recall that for any irreducible $U(\ggg)$-module $S$ with $p$-character $\eta$, there is a central character $\lambda_\eta(S):Z(\ggg)\rightarrow \bbk$. Denote by $\lambda_\eta^0(S)$ the corresponding $p$-central character which is equal to $\lambda_\eta(S)|_{Z_0(\ggg)}$. 
Then $\ker(\lambda_\eta(S))$ and $\ker(\lambda_\eta^0(S))$ are the maximal ideals of $Z(\ggg)$ and $Z_0(\ggg)$ respectively. %
The same thing happens on $\ct(\ggg,e)$.
\end{remark}
\begin{remark}
Set $Z(U(\ggg,e))$ to be the center of $U(\ggg,e)$. In virtue of Lemma$\;$\ref{Noe Pri}(3) and Theorem \ref{prem}(2), $Z(U(\ggg,e))\subseteq U(\ggg,e)$ must be an integral domain. Let $\text{Frac}(Z_0(\tilde{\mathfrak{p}}))$ and $\Frac(Z(U(\ggg,e)))$ denote the fractional field of $Z_0(\tilde{\mathfrak{p}})$ and $Z(U(\ggg,e))$ respectively. Parallel to the situation of $\ct(\ggg,e)$, we have the following fractional ring $$Q(U(\ggg,e)):=U(\ggg,e)\otimes_{Z(U(\ggg,e))}\Frac(Z(U(\ggg,e))).$$
Then the same statements as in Propositions \ref{PI and Central S}, \ref{centersimple} and \ref{lessthan} hold for the extended finite $W$-algebra ${U}(\ggg,e)$, i.e.,
\begin{itemize}
\item[(1)] $Z_0(\tilde{\mathfrak{p}})$ is a  Noetherian ring;
\item[(2)] the ring $Z(U(\ggg,e))$ is integrally closed;
\item[(3)] $Z(U(\ggg,e))$ is integral over $\mathds{k}$-algebra $Z_0(\tilde{\mathfrak{p}})$, and $Z(U(\ggg,e))$ is finitely-generated as a $Z_0(\tilde{\mathfrak{p}})$-module. In particular, $Z(U(\ggg,e))$ is a finitely-generated commutative algebra over $\bbk$;
\item[(4)] $U(\ggg,e)$ is a PI ring;
\item[(5)] $Q(U(\ggg,e))\cong U(\ggg,e)\otimes_{Z_0(\tilde{\mathfrak{p}})}\text{Frac}(Z_0(\tilde{\mathfrak{p}}))$;
\item[(6)] $Q(U(\ggg,e))$ is a division algebra;
\item[(7)] $Q(U(\ggg,e))$ is a finite-dimensional central simple algebra over $\Frac(Z(U(\ggg,e)))$ with dimension not less than $p^\ell$, where $\ell={\dim\ggg-\rank\,\ggg-\dim  G.e}$;
\item[(8)] all irreducible modules of $U(\ggg,e)$ are finite-dimensional with dimension less than $p^{\frac{\ell}{2}}$.
\end{itemize}
\end{remark}

\section{Centers of finite $W$-algebras: preliminary lemmas}\label{On the centers of}
Keep the notations and assumptions as before.
 In particular, we have a map $\varphi:~Z(\ggg)\rightarrow U(\ggg,e)$ (see \S\ref{p-center}),  which will play a key role in the sequent.
\subsection{Background}\label{bac}
Let us first observe the centers of finite $W$-algebras over $\bbc$. According to Premet's arguments in the footnote of \cite[Question 5.1]{P3}, there is an isomorphism between $\bbc$-algebras:
$$\varphi_\bbc:~Z(\ggg_\bbc)\cong Z(U(\ggg_\bbc,\hat e)),$$
where $Z(\ggg_\bbc)$ and $Z(U(\ggg_\bbc,\hat e))$ denote the center of the universal enveloping algebra $U(\ggg_\bbc)$ and the center of the finite $W$-algebra $U(\ggg_\bbc,\hat e)$ respectively, and the map $\varphi_\bbc$ is the counterpart of  $\varphi$.

Recall that under some mild assumption on $p$,
 $Z(\ggg)$ is  generated by two parts: the Harish-Chandra center $Z_1(\ggg):=U(\ggg)^G$ of the adjoint $G$-action,  and the $p$-center $Z_0(\ggg)$. 
Now we turn back to the finite $W$-algebra case. Let $Z(\ct)$ be the center of the finite $W$-algebra $\ct(\ggg,e)$. Recall that the $p$-center of $\ct(\ggg,e)$ is defined as $Z_0(\ct)=Z_0(\tilde{\mathfrak{p}})^{\mathcal{M}}$ in Definition \ref{pcenterW}. On the other hand,
by the definition of $\ct(\ggg,e)$ we know that the image of $Z_1(\ggg)$ under the map $\varphi$ in \S\ref{p-center} falls into $\ct(\ggg,e)$, which is denoted by $Z_1(\ct)$.
By definition, $Z_0(\ct)$ and $Z_1(\ct)$ lie in the center of $Z(\ct)$. A natural question rises:
\begin{question}\label{question}
Is $Z(\ct)$ generated by the $\bbk$-algebras $Z_0(\ct)$ and $Z_1(\ct)$?
\end{question}

First we recall some basics on the Harish-Chandra homomorphism in positive characteristic (one refers to \cite[\S9.1]{Jan2} for more details). For the triangular decomposition ${\ggg}=\mathfrak{n}^-\oplus\mathfrak{h}\oplus\mathfrak{n}^+$ of $\ggg$, we can define a linear map
\begin{equation}\label{pi}
\pi:U(\ggg)\rightarrow U(\mathfrak{h})
\end{equation} as the projection with kernel $\mathfrak{n}^-U(\ggg)+U(\ggg)\mathfrak{n}^+$. Clearly $U(\ggg)^{G}$ is contained in the $0$ weight space of $U(\ggg)$ with the adjoint action of the maximal torus $T$, so $\pi$ restricts to an algebra homomorphism $\HC:~U(\ggg)^{G}\rightarrow U(\mathfrak{h})$. Thanks to \cite[Lemma 9.1]{Jan2}, $\HC$ is injective.

Now we will deal with Question \ref{question}.  
The following observation is fundamental. 
\begin{lemma}\label{Har Chand Cent}
There exists an isomorphism between $\mathds{k}$-algebras
$$\varphi|_{U(\ggg)^{G}}:~U(\ggg)^G\xrightarrow\sim Z_1(\ct),$$
where $\varphi|_{U(\ggg)^{G}}$ denotes the restriction of the map $\varphi$ in \S\ref{p-center} to the subalgebra $U(\ggg)^{G}$ of $Z(\ggg)$.
\end{lemma}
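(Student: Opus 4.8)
The surjectivity of $\varphi|_{U(\ggg)^{G}}$ onto $Z_1(\ct)$ requires no argument, since $Z_1(\ct)$ is by definition the image $\varphi(U(\ggg)^{G})$; so the real content of the lemma is the injectivity of $\varphi|_{U(\ggg)^{G}}$. The plan is to reduce this to a statement about associated graded algebras. Since we are working with $p\gg0$ (to be relaxed later via Remark \ref{p good2}), Veldkamp's theorem applies and identifies $U(\ggg)^{G}$ with a polynomial algebra $\mathds{k}[g_1,\dots,g_r]$ in $r=\rank\,\ggg$ algebraically independent generators. Hence $\varphi|_{U(\ggg)^{G}}$ is injective if and only if the elements $f_i:=\varphi(g_i)$, $i=1,\dots,r$, are algebraically independent over $\mathds{k}$ inside $\ct(\ggg,e)$.

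To prove this independence I would pass to the associated graded algebra for the Kazhdan filtration. By \eqref{grse} and the identifications recalled in \S\ref{312}, $\gr\ct(\ggg,e)$ may be viewed simultaneously as $S(\ggg_e)$ and as the coordinate ring $\mathds{k}[\cs]$ of the Slodowy slice $\cs=e+\ker(\ad\,f)$ (with $\cs$ replaced by the good transverse slice of Remark \ref{remnormal} in the general case). Now $f_i=\varphi(g_i)=g_i+I_\chi\in Q_\chi$, and the point is that the $\mathds{k}$-subalgebra of $\mathds{k}[\cs]$ generated by the principal Kazhdan symbols $\gr f_1,\dots,\gr f_r$ is exactly the image of the restriction homomorphism $\mathds{k}[\ggg^*]^{G}\to\mathds{k}[\cs]$ (equivalently, the Poisson centre of $\mathds{k}[\cs]$). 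This is how the central generators are produced in \cite[\S4]{Pre1} over $\bbc$ and in \cite{GT} over $\mathds{k}$, and over $\bbc$ it is precisely the isomorphism $\varphi_\bbc\colon Z(\ggg_\bbc)\cong Z(U(\ggg_\bbc,\hat e))$ recalled above (the surjectivity of $\varphi_\bbc$ onto the centre passing to associated graded).

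It then suffices to show this restriction homomorphism is injective, i.e. that the adjoint quotient morphism $\pi\colon\ggg^*\to\Spec\bigl(\mathds{k}[\ggg^*]^{G}\bigr)\cong\bba^r$ restricts to a dominant morphism $\pi|_\cs\colon\cs\to\bba^r$. This is part of Slodowy's theorem on transverse slices to nilpotent orbits, recalled as Theorem \ref{premet}: under the hypotheses on $G$ and $p$ in force it gives that $\pi|_\cs$ is flat and surjective of relative dimension $\dim\ggg_e-r$, in particular dominant. Since $\cs$ is an affine space, hence an irreducible variety, a dominant morphism to the integral scheme $\bba^r$ has injective comorphism; thus $\gr f_1,\dots,\gr f_r$ are algebraically independent in $S(\ggg_e)$. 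A polynomial relation among the $f_i$ in $\ct(\ggg,e)$ would, on taking top Kazhdan-degree components, produce a nontrivial relation among the $\gr f_i$, which is impossible; hence the $f_i$ are algebraically independent, $\varphi|_{U(\ggg)^{G}}$ is injective, and the lemma follows.

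The main obstacle, and the only step needing real care, is the input of the last two paragraphs: identifying the symbols $\gr f_i$ with the restrictions of fundamental $G$-invariants and invoking flatness/surjectivity of the adjoint quotient restricted to the (good transverse) slice in positive characteristic. The latter is exactly where the subtle hypotheses on $G$ and $p$ assembled in \S\ref{GT hypotheses} enter. For $p\gg0$ these facts (Slodowy's slice theorem, the structure of the centre) are available by reduction modulo $p$ from characteristic zero using an admissible $A$-form $U(\ggg_A,\hat e)$; the extension to $p$ satisfying (H1)+(H4) or (H2)+(H5) is then supplied by the Goodwin--Topley framework \cite{GT}, cf. Remark \ref{p good2}.
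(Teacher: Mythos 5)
Your proposal is correct, but it takes a genuinely different route from the paper. The paper proves injectivity of $\varphi|_{U(\ggg)^G}$ by a short algebraic argument: the Harish-Chandra homomorphism $\HC\colon U(\ggg)^G\to U(\hhh)$ is injective, and since $\hhh\subset\tilde{\mathfrak{p}}$ and $U(\ggg)$ splits as $U(\tilde{\mathfrak{p}})\oplus I_\chi$ (so $\tilde{\mathfrak{p}}\cong Q_\chi$ as vector spaces), the projection $\varphi$ stays injective on $U(\ggg)^G$; no slice geometry enters. You instead reduce injectivity to algebraic independence of $f_1,\dots,f_r$, pass to the Kazhdan-graded algebra via \eqref{grse}, identify $\gr(f_i)$ with the restriction $\kappa(\tilde g_i)|_{\cs}$, and invoke faithful flatness and surjectivity of $\varpi_\cs$ -- which in the paper are Lemmas \ref{Tran sli} and \ref{Tran gen sli} rather than Theorem \ref{premet}, the latter being about irreducibility and normality of the fibres. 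This is heavier machinery than the paper needs at this point (it is essentially the toolkit the paper reserves for Theorem \ref{central thm} in \S\ref{pf for thm 1.1}), but it is not circular: those geometric lemmas nowhere use Lemma \ref{Har Chand Cent}. Two small points deserve care. First, the identification $\gr(f_i)=\kappa(\tilde g_i)|_{\cs}$ is asserted by citation and does need a short argument: every PBW monomial of the central element $g_i$ has zero $\ad\,h$-weight, so the Kazhdan symbol of $g_i$ equals its standard top symbol $\tilde g_i$, and its image in $\gr(\ct(\ggg,e))\cong\bbk[\cs]$ is the restriction $\tilde g_i|_{\cs}$, which is nonzero because $\Ad\,G\cdot\cs$ is dense in $\ggg$ (this identification is also used, with a comparable level of detail, in Step 3 of \S\ref{pro}). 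Second, the parenthetical identification of the subalgebra generated by the symbols with the Poisson centre of $\bbk[\cs]$ is inaccurate in characteristic $p$, though it is not load-bearing. What your route buys is an explicit link between $Z_1(\ct)$ and the restriction of invariants to the (good transverse) slice, which the paper exploits only later; what the paper's route buys is brevity and independence from the geometry of \S\ref{Azumaya locus}.
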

\begin{proof} It follows from the same arguments as in
\cite[\S 6.2]{Pre1} for the case over $\bbc$.
Actually, by \eqref{equiW}  we identify $\ct(\ggg,e)$ with $Q_\chi^{\mathcal{M}}$. For any nonzero $z\in U(\ggg)^G\subseteq Z(\ggg)$, apply $z$ to the canonical generator $1_\chi$ of $Q_\chi$ and then express $z(1_\chi)\in Q_\chi^{\mathcal{M}}$ via its monomial basis. Note that the Harish-Chandra projection  $\HC:~U(\ggg)^{G}\rightarrow U(\mathfrak{h})$ is injective, and $z$ has the same standard filtration degree as its projection $\HC(z)$ in the polynomial ring $U(\hhh)$. Hence that $z(1_\chi)\ne 0$ because the terms of $z$  other than the projection $\HC(z)\ne0$ will
 not be able to cancel $\HC(z)(1_\chi)$ which implies that $z(1_\chi)\ne 0$.

To complete the proof, it remains to note that the image of $U(\ggg)^{G}$ under the map $\varphi|_{U(\ggg)^{G}}$ is just $Z_1(\ct)$.
%
%
%
\end{proof}
\subsection{Some lemmas}\label{3.2}
Let $W$ be the (abstract) Weyl group of $G$ associated with a given Cartan subalgebra $\mathfrak{h}$. Define the dot action of $W$ on $\hhh^*$ via 
$w\centerdot\lambda=w(\lambda+\rho)-\rho$ for  $\lambda\in\mathfrak{h}^*$ and $w\in W$, where $\rho$  is half-sum of all of the positive roots $\Phi^+$ of $\ggg$. The dot action on $\mathfrak{h}^*$ yields also a dot action on $U(\mathfrak{h})$. As $\mathfrak{h}$ is commutative, we can identify $U(\mathfrak{h})$ with the symmetric algebra $S(\mathfrak{h})$, hence with the algebra of polynomial functions of $\mathfrak{h}^*$. Considering $f\in U(\mathfrak{h})$ as a function on $\mathfrak{h}^*$  we define $w\centerdot f$ for $w\in W$ by $(w\centerdot f)(\lambda)=f(w^{-1}\centerdot\lambda)$. Then Harish-Chandra's theorem in \cite[Proposition 2.1]{Ve}  shows that $\HC:~U(\ggg)^G\cong U(\hhh)^{W\centerdot}$ as $\bbk$-algebras, where $U(\hhh)^{W\centerdot}$ is  the invariant subring of $U(\hhh)$ under the action of $W\centerdot$. Moreover, \cite[\S2]{Ve} entails that $U(\hhh)^{W\centerdot}$ is isomorphic to a polynomial algebra $\bbk[T_1,\cdots,T_r]$ for $r=\rank\,\ggg$. We choose  $\{g_1,\cdots,g_r\}$ as a set of algebraically independent generators in $U(\ggg)^G$ such that $\HC(g_i)=T_i$ for $1\leqslant i\leqslant r$. We can further assume that $\text{deg}_Sg_i=m_i+1$ for $1\leqslant i\leqslant r$ associated to the standard grading of $U(\ggg)$, where $m_1,\cdots,m_r$ are the exponents of the Weyl group of $\ggg$. Set $f_i:=\varphi(g_i)$ with $1\leqslant i\leqslant r$. Then the $f_i$'s are elements in $Z_1(\ct)$ by definition.

Recall the notation $\Lambda_k=\{(i_1,\cdots,i_k)\mid i_j\in\{0,1,\cdots,p-1\}\}$
for $k\in\mathbb{Z}_+$ with $1\leqslant j\leqslant k$. Then we have

\begin{lemma}\label{Veldkamp mod}
The elements $f_1^{t_1}\cdots f_r^{t_r}$ with $(t_1,t_2,\cdots,t_r)$ running through $\Lambda_r$ generate a free module $M$ over $Z_0(\ct)$. Moreover, these elements form a $Z_0(\ct)$-basis for $M$.
\end{lemma}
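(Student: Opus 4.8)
The plan is to pass to the associated graded algebras for the Kazhdan filtration and to reduce the asserted $Z_0(\ct)$-linear independence to the fact that the restriction of the adjoint quotient to the Slodowy slice is generically smooth. First I would reduce to the level of symbols. By \eqref{grse} the algebra $\gr\ct(\ggg,e)\cong S(\ggg_e)$ is an integral domain, so every nonzero $a\in\ct(\ggg,e)$ has a well-defined Kazhdan symbol $\gr(a)$ with $\gr(ab)=\gr(a)\gr(b)$; since $f_i=\varphi(g_i)\neq0$ by Lemma \ref{Har Chand Cent} we get $\gr(f_i)\neq0$ and $\gr(f_1^{t_1}\cdots f_r^{t_r})=\gr(f_1)^{t_1}\cdots\gr(f_r)^{t_r}$. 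Hence from any nontrivial relation $\sum_{\mathbf t\in\Lambda_r}a_{\mathbf t}f_1^{t_1}\cdots f_r^{t_r}=0$ with $a_{\mathbf t}\in Z_0(\ct)$, extracting the component of top Kazhdan degree produces a nontrivial relation over $\gr(Z_0(\ct))$ among the monomials $\gr(f_1)^{t_1}\cdots\gr(f_r)^{t_r}$, $\mathbf t\in\Lambda_r$, inside $S(\ggg_e)$. So it suffices to prove that these monomials are $\gr(Z_0(\ct))$-linearly independent.

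Next I would identify all the graded data. Using \eqref{grse}, \S\ref{312} and the identification $Z_0(\ct)\cong\bbk[(\chi+\kappa(\ker(\ad\,f)))^{(1)}]$, one identifies $\gr\ct(\ggg,e)$ with the coordinate ring $\bbk[\cs]$ of the Slodowy slice $\cs=e+\ker(\ad\,f)$ (the good transverse slice in the general case), and $\gr(Z_0(\ct))$ with the subring $\bbk[\cs]^{(p)}$ of $p$-th powers, over which $\bbk[\cs]$ is free of rank $p^{\dim\ggg_e}$. Moreover $\varphi$ is a filtered map, and for $z\in Z(\ggg)$ the symbol $\gr(\varphi(z))$ is the image of $\gr(z)\in S(\ggg)$ in $\gr(Q_\chi)$; applied to $z=g_i$ this identifies $\gr(f_i)\in\bbk[\cs]$ with the restriction to $\cs$ of the homogeneous invariant $q_i:=\gr(g_i)\in S(\ggg)^{G}$, and the $q_i$ generate $S(\ggg)^{G}$ under our hypotheses on $p$ (via the Chevalley restriction isomorphism and $\HC(g_i)=T_i$). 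Thus the claim becomes: $q_1|_{\cs},\dots,q_r|_{\cs}$ are $p$-independent over $\bbk(\cs)^{p}$, equivalently their differentials are $\bbk(\cs)$-linearly independent in $\Omega_{\bbk(\cs)/\bbk}$; granting this, the $p^{r}$ monomials of degree $<p$ in each of them are linearly independent over $\bbk(\cs)^{p}\supseteq\bbk[\cs]^{(p)}=\gr(Z_0(\ct))$, which is exactly what the first step needs.

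It then remains to establish this $p$-independence, and here I would argue geometrically. The invariants $q_1,\dots,q_r$ define the adjoint quotient $\pi\colon\ggg\to\bba^{r}$, which is smooth on the regular locus $\ggg^{\reg}$ under our hypotheses (this uses that $p$ is large/good enough for Kostant's theorem and the polynomiality of $S(\ggg)^{G}$). By Slodowy's and Premet's results on transverse slices (see Theorem \ref{premet}), $\cs$ meets $\ggg^{\reg}$, so $\cs\cap\ggg^{\reg}$ is a dense open subset of the irreducible variety $\cs$; and the transversality $T_{x}\cs+[\ggg,x]=T_{x}\ggg$ holds on a dense open subset of $\cs$ (it holds at $x=e$ and is an open condition). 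For $x$ in the common dense open locus, $d\pi_{x}$ is surjective and annihilates $[\ggg,x]$, so $d(\pi|_{\cs})_{x}=d\pi_{x}|_{T_{x}\cs}$ is still surjective; hence $\pi|_{\cs}\colon\cs\to\bba^{r}$ is smooth on a dense open subset, in particular dominant and generically smooth. Therefore $\bbk(q_1|_{\cs},\dots,q_r|_{\cs})\hookrightarrow\bbk(\cs)$ is a separably generated field extension in which $q_1|_{\cs},\dots,q_r|_{\cs}$ form part of a separating transcendence basis, so their differentials are $\bbk(\cs)$-linearly independent, as required. As in the rest of the paper this would be carried out with $p\gg0$ and then extended to all $p$ satisfying (H1)+(H4) or (H2)+(H5) through the Goodwin--Topley framework.

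The hard part is the geometric input of the last step: that the restriction of the adjoint quotient to the Slodowy slice (or good transverse slice) is generically smooth, equivalently that $\cs\cap\ggg^{\reg}$ is dense in $\cs$ and the quotient map is separable there. This is precisely the fact that degenerates for small $p$, and it is secured by Slodowy's and Premet's slice theorems (and Kostant's smoothness of the adjoint quotient on the regular locus) under the stated hypotheses; the remaining ingredients are the routine passage to Kazhdan symbols and the standard translation between $p$-independence and linear independence of Kähler differentials.
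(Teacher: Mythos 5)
Your proof is correct in substance, but it takes a genuinely different route from the paper. The paper argues without passing to associated graded algebras: it writes each coefficient as $a_{\mathbf t}=\varphi(b_{\mathbf t})$ with $b_{\mathbf t}\in Z_p(\tilde{\mathfrak{p}})$, so that a relation among the $\mathbf f^{\mathbf t}$ becomes $\varphi\bigl(\sum_{\mathbf t}b_{\mathbf t}\mathbf g^{\mathbf t}\bigr)=0$, and then proves the key injectivity statement $\ca\cap\ker\varphi=\{0\}$ (Lemma \ref{forthcoming}) by specializing at every $\eta\in\chi+\mmm^\perp$: the images $\theta_\eta(\mathbf g^{\mathbf t})$ are linearly independent in $U_\eta(\ggg,e)$ (via the injectivity of the Harish-Chandra map and $\hhh\subset\tilde{\mathfrak{p}}$), while any nonzero $b_{\mathbf t}$ survives specialization at some $\eta$; Veldkamp's theorem for $U(\ggg)$ then forces all $b_{\mathbf t}=0$. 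Your argument instead reduces everything to the Kazhdan-graded level, identifies $\gr(f_i)$ with the restriction of the basic invariant $\tilde g_i$ to the slice $\cs$, and deduces the required independence of the $p^r$ monomials from $p$-independence, i.e.\ from the generic surjectivity of $d(\varpi_\cs)$, which is secured by the regular-point/smooth-point results of Theorem \ref{premet}. Your route buys more: it essentially establishes the graded injectivity that the paper only obtains later (Lemma \ref{complete int} and Step 3 of \S\ref{pro} rely on exactly the identification of the Jacobian of the $F_i$ with that of $\kappa(\tilde g_i)|_\cs$), and it does so without invoking Veldkamp's theorem for $U(\ggg)$ at this point; the price is that the geometric input (density of $\cs\cap\ggg_{\reg}$ and full rank of the differentials of the invariants at regular elements, valid under (H1)--(H3)) is needed already here, whereas the paper's specialization argument is elementary and defers all geometry to \S\ref{Azumaya locus} and \S\ref{pf for thm 1.1}. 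One point you should make explicit is the identification $\gr(\varphi(g_i))=\tilde g_i|_\cs$: you must check that $\varphi$ is filtered for the Kazhdan filtrations and that no degree drop occurs, i.e.\ that $\tilde g_i$ does not vanish on $\chi+\mmm^\perp$; your own nonvanishing/generic-submersivity statement supplies this, but the bookkeeping (and the fact that $\gr(Z_0(\ct))$ consists of $p$-th powers in $\bbk[\cs]$, so independence over $\bbk(\cs)^p$ suffices) deserves a sentence, since it is exactly the step the paper's proof is designed to avoid.
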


\begin{proof} Note that we already  have $Z_0(\tilde{\mathfrak{p}})\cap\ker(\varphi)=\{0\}$, and  $\varphi(Z_0(\ggg))\cong\varphi(Z_0(\tilde{\mathfrak{p}}))\cong Z_0(\tilde{\mathfrak{p}})$ by \S\ref{pcen}.
 As $Z_0(\ct)\subseteq Z(\ct)\subseteq\ct(\ggg,e)$,  it follows from \eqref{twoequation} that
$Z_0(\ct)=Z(\ct)\cap\varphi(Z_0(\tilde{\mathfrak{p}}))$. Especially, we have $Z_0(\ct)\subseteq \varphi (Z_0(\tilde{\mathfrak{p}}))$. Combining this with Lemma \ref{Har Chand Cent}, we can obtain
\begin{align}\label{Augmented ideal}
\ker(\varphi|_{Z(\ggg)})\subseteq Z_0(\mmm_\chi)^+Z(\ggg),
\end{align}
where $Z_0(\mmm_\chi)^+$ is the ideal of $Z_0(\mmm)$ generated by $x^p-x^{[p]}-\chi(x)^p$ for $x\in\mmm$. Given $\mathbf{t}:=(t_1,t_2,\cdots,t_r)\in\Lambda_r$,
let $\mathbf{g}^{\mathbf{t}}:=g_1^{t_1}\cdots g_r^{t_r}$ be the monomial in $U(\ggg)^G$. Denote by $N:=\{\mathbf{g}^{\mathbf{t}}\mid\bt\in\Lambda_r\}$, and let
$\ca$ be the $Z_0(\tilde{\mathfrak{p}})$-module generated by all the elements in $N$. By Veldkamp's theorem (\cite[Theorem 3.1]{Ve}) we can define a map $\varphi|_\ca: \ca\rightarrow \varphi(\ca)\subseteq Z(U(\ggg,e))$.

For $\mathbf{t}=(t_1,t_2,\cdots,t_r)\in\Lambda_r$,
denote $\mathbf{f}^{\mathbf{t}}:=f_1^{t_1}\cdots f_r^{t_r}$ which is equal to $\varphi(\mathbf{g}^{\mathbf{t}})$, a monomial in $Z_1(\ct)$.
To complete the proof of the lemma, we only need to prove that for any given equation
\begin{equation}\label{at}
\sum_{\mathbf{t}\in \Lambda_r} a_{\mathbf{t}}\mathbf{f}^{\mathbf{t}}=0 \mbox{ where the coefficients }  a_{\mathbf{t}}\in Z_0(\ct),
\end{equation}
these $a_\bt$ 
must be zero. As $Z_0(\ct)$ is a subalgebra of $\varphi(Z_0(\tilde{\mathfrak{p}}))$, we can write   $a_{\bt}=\varphi(b_\bt)$ for some $b_\bt\in Z_0(\tilde{\mathfrak{p}})$ when $\mathbf{t}$ ranges in $\Lambda_r$. Then
 $$\sum_{\mathbf{t}\in \Lambda_r} a_{\mathbf{t}}\mathbf{f}^{\mathbf{t}}=\sum_{\mathbf{t}\in \Lambda_r} \varphi (b_{\mathbf{t}})\varphi(\mathbf{g}^{\mathbf{t}})=\sum_{\mathbf{t}\in \Lambda_r} \varphi (b_{\mathbf{t}}\mathbf{g}^{\mathbf{t}})\in\varphi(\ca).$$
 Due to the injectivity of the map $\varphi|_\ca$ (see Lemma \ref{forthcoming} below),  the following equation holds in $Z(\ggg)$:
 $$\sum_{\mathbf{t}\in \Lambda_r} b_{\mathbf{t}}\mathbf{g}^{\mathbf{t}}=0.$$
 By Veldkamp's theorem, all the coefficients $b_{\mathbf{t}}$ with $\mathbf{t}\in \Lambda_r$ must be zero, thereby all the coefficients $a_\bt$ in \eqref{at} must be zero. The proof is completed (modulo Lemma \ref{forthcoming}).
\end{proof}

Before introducing Lemma \ref{forthcoming}, we make some preparation.  For any  $\eta\in \chi+\mmm^\perp\subseteq \ggg^*$, we  have a canonical projection
$$ {\Pr}_{\eta}: U(\ggg,e)\rightarrow U^\circ_\eta({\ggg},e)=U(\ggg,e)/H_\eta$$
by \eqref{quo}.
Set $\theta_\eta:={\Pr}_{\eta}\circ \varphi: Z(\ggg)\rightarrow U^\circ_\eta({\ggg},e)$.

\begin{lemma}\label{forthcoming} Let $\ca$ be as in the proof of Lemma \ref{Veldkamp mod}. Then $\ca\cap \ker(\varphi)=\{0\}$.
\end{lemma}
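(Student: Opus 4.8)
The plan is to reduce the statement to Veldkamp's structure theorem for $Z(\ggg)$ together with the containment $\ker\varphi|_{Z(\ggg)}\subseteq Z_p(\mmm)^+Z(\ggg)$ already recorded in \eqref{Augmented ideal}. Recall from \cite[Theorem 3.1]{Ve} that the monomials $\mathbf{g}^{\mathbf{t}}=g_1^{t_1}\cdots g_r^{t_r}$ with $\mathbf{t}\in\Lambda_r$ form a free $Z_p(\ggg)$-basis of $Z(\ggg)$, and from \S\ref{pcen} that the multiplication map gives an algebra isomorphism $Z_p(\mmm)\otimes_{\bbk}Z_p(\tilde{\mathfrak{p}})\cong Z_p(\ggg)$. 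Since $Z_p(\mmm)^+$ is a maximal ideal of $Z_p(\mmm)$ of codimension one, $Z_p(\mmm)=\bbk\cdot 1\oplus Z_p(\mmm)^+$ as $\bbk$-spaces; tensoring with $Z_p(\tilde{\mathfrak{p}})$ and using that $Z_p(\mmm)^+$ is an ideal (so $Z_p(\mmm)^+Z_p(\ggg)=Z_p(\mmm)^+Z_p(\tilde{\mathfrak{p}})$) yields the direct sum decomposition
\begin{equation*}
Z_p(\ggg)=Z_p(\tilde{\mathfrak{p}})\ \oplus\ Z_p(\mmm)^+Z_p(\ggg);
\end{equation*}
in particular $Z_p(\tilde{\mathfrak{p}})\cap Z_p(\mmm)^+Z_p(\ggg)=\{0\}$.

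Next I would take an arbitrary $x\in\ca\cap\ker\varphi$ and write $x=\sum_{\mathbf{t}\in\Lambda_r}b_{\mathbf{t}}\mathbf{g}^{\mathbf{t}}$ with $b_{\mathbf{t}}\in Z_p(\tilde{\mathfrak{p}})$, which is legitimate by the definition of $\ca$. Since $x\in\ker\varphi|_{Z(\ggg)}$, \eqref{Augmented ideal} gives $x\in Z_p(\mmm)^+Z(\ggg)$; and because $Z(\ggg)$ is free over $Z_p(\ggg)$ on $\{\mathbf{g}^{\mathbf{t}}\}_{\mathbf{t}\in\Lambda_r}$, one has
\begin{equation*}
Z_p(\mmm)^+Z(\ggg)=\bigoplus_{\mathbf{t}\in\Lambda_r}\bigl(Z_p(\mmm)^+Z_p(\ggg)\bigr)\,\mathbf{g}^{\mathbf{t}}.
\end{equation*}
Comparing the coordinates of $x$ with respect to this free basis forces $b_{\mathbf{t}}\in Z_p(\mmm)^+Z_p(\ggg)$ for every $\mathbf{t}$. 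Combined with $b_{\mathbf{t}}\in Z_p(\tilde{\mathfrak{p}})$ and the triviality of the intersection noted above, this gives $b_{\mathbf{t}}=0$ for all $\mathbf{t}$, hence $x=0$, which is exactly $\ca\cap\ker\varphi=\{0\}$.

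The only non-formal ingredient is the inclusion \eqref{Augmented ideal}, which is where the explicit description of $\varphi$, together with $Z_0(\ct)=Z_p(\tilde{\mathfrak{p}})^{\mathcal{M}}$ (via \eqref{twoequation}) and Lemma \ref{Har Chand Cent}, is used; once this is granted the argument is pure bookkeeping with the free $Z_p(\ggg)$-module structure of $Z(\ggg)$ and the decomposition $Z_p(\ggg)=Z_p(\mmm)\otimes_{\bbk}Z_p(\tilde{\mathfrak{p}})$, so I do not expect any real obstacle. The one technical point worth keeping in mind is that the generators $g_1,\dots,g_r$ fixed in \S\ref{3.2} (through the Harish-Chandra isomorphism) must be taken to be Veldkamp's free generators — or, equivalently, one should check that any choice of algebraically independent homogeneous generators of $U(\ggg)^G$ produces a free $Z_p(\ggg)$-basis of $Z(\ggg)$ via the monomials $\mathbf{g}^{\mathbf{t}}$, $\mathbf{t}\in\Lambda_r$.
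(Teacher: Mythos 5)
Your deduction from \eqref{Augmented ideal} is correct bookkeeping, but it does not constitute a proof of the lemma, because \eqref{Augmented ideal} is not an independently established fact in the paper: given the very inputs you use (Veldkamp's freeness of $Z(\ggg)$ over $Z_p(\ggg)$ on the $\mathbf{g}^{\mathbf{t}}$, and $Z_p(\ggg)\cong Z_p(\mmm)\otimes_{\bbk}Z_p(\tilde{\mathfrak{p}})$), one has the vector-space decomposition $Z(\ggg)=\ca\oplus Z_p(\mmm)^+Z(\ggg)$ together with $Z_p(\mmm)^+Z(\ggg)\subseteq\ker\varphi|_{Z(\ggg)}$ (reading $Z_p(\mmm)^+$ as $Z_p(\mmm)\cap\ker\varphi$, the only reading under which \eqref{Augmented ideal} can hold), whence $\ker\varphi|_{Z(\ggg)}=(\ca\cap\ker\varphi)\oplus Z_p(\mmm)^+Z(\ggg)$. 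Thus \eqref{Augmented ideal} holds \emph{if and only if} $\ca\cap\ker\varphi=\{0\}$ — it is an equivalent reformulation of the statement you are proving, so the argument is circular. The ingredients cited for \eqref{Augmented ideal} (Lemma \ref{Har Chand Cent} and $Z_p(\tilde{\mathfrak{p}})\cap\ker\varphi=\{0\}$) only give injectivity of $\varphi$ on $U(\ggg)^G$ and on $Z_p(\tilde{\mathfrak{p}})$ \emph{separately}; the whole difficulty is to rule out kernel elements that mix $Z_p(\tilde{\mathfrak{p}})$-coefficients with Harish-Chandra monomials, which is exactly what the lemma asserts and what injectivity on each piece does not imply.

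The missing content is an independent argument for this mixed linear independence, and that is what the paper's proof supplies by specialization at $p$-characters. For each $\eta\in\chi+\mmm^\perp$ it passes to $U^\circ_\eta(\ggg,e)\cong U_\eta(\ggg,e)$ via $\theta_\eta={\Pr}_\eta\circ\varphi$, under which $Z_p(\tilde{\mathfrak{p}})$ maps into $\bbk$; it shows, using Veldkamp's theorem, the injectivity of the Harish-Chandra map and $\hhh\subset\tilde{\mathfrak{p}}$, that the elements $\theta_\eta(\mathbf{g}^{\mathbf{t}})$, $\mathbf{t}\in\Lambda_r$, are linearly independent over $\bbk$ in $U_\eta(\ggg,e)$; and it observes that any nonzero $b_{\mathbf{t}}\in Z_p(\tilde{\mathfrak{p}})$, being a nonzero polynomial in the elements $X_i^p-X_i^{[p]}$, satisfies $\theta_\eta(b_{\mathbf{t}})\neq0$ for a suitable $\eta$. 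A hypothetical nonzero element of $\ca\cap\ker\varphi$ then specializes to a nontrivial $\bbk$-linear relation among the $\theta_\eta(\mathbf{g}^{\mathbf{t}})$, a contradiction. To close the gap you would need an argument of this kind (or some other genuine proof of \eqref{Augmented ideal}); as written, your proposal assumes the key fact rather than proving it.
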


\begin{proof} Keep the notations as above.
We  prove the lemma by reductio ad absurdum. Suppose $\ca\cap\ker(\varphi)\ne\{0\}$. Then we have
\begin{equation}\label{spannoeta}
\sum_{\bt\in\Lambda_r} a_\bt\df^\bt=0
\end{equation}
for some $a_\bt=\varphi(b_\bt)\in \varphi(Z_0(\tilde{\mathfrak{p}}))$ which are  not all zero. We proceed the arguments in steps.

(i)  We claim that for any non-zero $b_\bt\in Z_0(\tilde{\mathfrak{p}})$ with $\bt\in\Lambda_r$, there exists some $\eta\in \chi+\mmm^\perp$ such that $\theta_\eta(b_\bt)\ne 0$.
This follows from the fact that $Z_0(\tilde{\mathfrak{p}})$ equals the coordinate algebra of the Frobenius twist $(\chi+\mathfrak{m}^\perp)^{(1)}$ of $\chi+\mathfrak{m}^\perp$; see \S\ref{312} for more details.

(ii)  We claim that for any $\eta\in  \chi+\mmm^\perp$, $\theta_\eta(\bg^\bt)={\Pr}_{\eta}(\df^\bt)$ for all $\bt\in \Lambda_r$ are linearly independent in $U^\circ_\eta(\ggg,e)$.
In order to show this, we firstly observe that under the map $\theta_\eta$ the image of $Z_0(\tilde{\mathfrak{p}})$ falls into $\bbk$.

Recall that there is an isomorphism $\psi_\eta: U^\circ_\eta(\ggg,e)\rightarrow U_\eta(\ggg,e)=(\text{End}_\ggg Q_\chi^\eta)^{\text{op}}$ by \eqref{reduced W3}.
Denote by $Z^\eta(\ggg)$ the image of $Z(\ggg)$ under the canonical homomorphism
\begin{equation*}\label{Pi}
\tau_\eta:~~U({\ggg})\rightarrow U_\eta({\ggg}),
\end{equation*}
and define $\varphi_\eta$ to be a homomorphism of $\mathds{k}$-algebras from $Z^\eta(\ggg)$ to $U_\eta(\ggg,e)$ via
$x\mapsto l_x$,
where $l_x(1_\chi)=x.1_\chi\in Q_\chi^\eta$ for any $x\in Z^\eta(\ggg)$. Moreover, since $U_{\eta}({\ggg},e)\cong(Q^\eta_\chi)^{\text{ad}\,{\mmm}}$ by Lemma \ref{redum},
we can similarly define $\varphi_\eta(x)$ in $(Q^\eta_\chi)^{\text{ad}\,{\mmm}}$ and regard it as an element of $U_{\eta}(\ggg,e)$ in the following.

 Secondly,  the following diagram is commutative
\[
\begin{CD}
Z(\ggg) @>{\varphi}>> U(\ggg,e)@>{{\Pr}_{\eta}}>> U^\circ_\eta(\ggg,e)\\
@V{\tau_\eta}VV @V{\psi_\eta\circ{\Pr}_{\eta}}VV @VV{=}V \\
Z^\eta(\ggg) @>>{\varphi_\eta}>U_\eta(\ggg,e)@<<{\psi_\eta}<{U}^\circ_\eta(\ggg,e)
\end{CD}
\]
Actually, the commutativity is assured by the definitions of those algebras and the isomorphism $\psi_\eta$. We will identify $U_\eta(\ggg,e)$ with $U^\circ_\eta(\ggg,e)$ in the following.

Thirdly, by Veldkamp's theorem  $Z(\ggg)$ is a free $Z_0(\ggg)$-module with a basis consisting of all $\bg^\bt$ with $\bt\in\Lambda_r$. Therefore, the elements $\tau_\eta(\bg^\bt)$ are linearly independent in $Z^\eta(\ggg)$.

 Lastly, note that the elements $\bg^\bt\in U(\ggg)^G$ with $\bt\in\Lambda_r$  come from the Harish-Chandra homorphism $\HC: U(\ggg)^G\rightarrow U(\hhh)$, and we have $\eta|_\mmm=\chi|_\mmm$ by definition. Denote by $\mathsf{HC}^\eta_\chi$ the subspace of $U_\eta(\ggg)$ spanned by all $\tau_\eta(\bg^\bt)$ with $\bt\in\Lambda_r$. By the same discussion as in the proof of Lemma \ref{Har Chand Cent},
 the map $\varphi_\eta|_{\mathsf{HC}^\eta_\chi}$ is injective. The claim follows from the above commutative diagram.

(iii)  By (i),  for any non-zero $b_\bt\in Z_0(\tilde{\mathfrak{p}})$ with $\bt\in\Lambda_r$, we can always find some $\eta\in \chi+\mmm^\perp$ such that $\theta_\eta(b_\bt)\ne 0$.
From \eqref{spannoeta} we have
\begin{align}\label{nontrival span}
\sum_{\bt\in\Lambda_r} \theta_\eta(b_\bt)\theta_\eta(\bg^\bt)=0.
\end{align}
The summand on the left hand side of \eqref{nontrival span} lies in $U_\eta(\ggg,e)$.
Keep in mind  $\theta_\eta(b_\bt)\in\bbk$. The left hand side of (\ref{nontrival span}) is a non-trivial $\bbk$-span of $\{\theta_\eta(\bg^\bt)\mid \bt\in \Lambda_r\}$, contradicting the claim in (ii) that $\theta_\eta(\bg^\bt)$ for all $\bt\in \Lambda_r$ are linearly independent.

By summing up, we accomplish the arguments.
\end{proof}
\begin{rem}\label{remformain1}
In virtue of Lemma \ref{forthcoming}, by careful inspection we can see that an analogue of Lemma \ref{Veldkamp mod} given by substituting $Z_0(\tilde{\mathfrak{p}})$ for $Z_0(\ct)$ is still true.
\end{rem}
Denote by $\cz$ the free $Z_0(\ct)$-module generated by all $\{\mathbf{f}^\bt\mid \bt\in \Lambda_r\}$ as introduced in Lemma \ref{Veldkamp mod}.  And denote by $\tilde\cz$ the subalgebra of $Z(\ct)$ generated by $Z_0(\ct)$ and $Z_1(\ct)$.

\begin{lemma}\label{mod equal alg} Both  $\cz$ and $\tilde\cz$ coincide.
\end{lemma}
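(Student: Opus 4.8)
The plan is to prove the two inclusions $\cz\subseteq\tilde\cz$ and $\tilde\cz\subseteq\cz$ separately. The first is immediate: since $f_1,\dots,f_r\in Z_1(\ct)\subseteq\tilde\cz$ and $Z_0(\ct)\subseteq\tilde\cz$, every generator $z\,\mathbf{f}^{\mathbf{t}}$ ($z\in Z_0(\ct)$, $\mathbf{t}\in\Lambda_r$) of the $Z_0(\ct)$-module $\cz$ already lies in the ring $\tilde\cz$, so $\cz\subseteq\tilde\cz$. For the reverse inclusion I would first rewrite $\tilde\cz$ conveniently. By Lemma \ref{Har Chand Cent}, $\varphi$ restricts to an isomorphism $U(\ggg)^G\xrightarrow{\sim}Z_1(\ct)$; since $U(\ggg)^G=\bbk[g_1,\dots,g_r]$ is a polynomial algebra on the algebraically independent generators $g_i$ and $f_i=\varphi(g_i)$, it follows that $Z_1(\ct)=\bbk[f_1,\dots,f_r]$. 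As $Z_0(\ct)$ and the $f_i$ are central in $\ct(\ggg,e)$ and mutually commute, $\tilde\cz$ is exactly the $Z_0(\ct)$-subalgebra of $Z(\ct)$ generated by $f_1,\dots,f_r$, and hence $\tilde\cz=\sum_{\mathbf{s}\in\bbz_+^r}Z_0(\ct)\,\mathbf{f}^{\mathbf{s}}$. Therefore it suffices to prove that $\mathbf{f}^{\mathbf{s}}\in\cz$ for every $\mathbf{s}\in\bbz_+^r$; informally, one must reduce the exponents modulo $p$ at the cost of coefficients in $Z_0(\ct)$.

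To do this I would transport the corresponding statement from $Z(\ggg)$ through $\varphi$. Fix $\mathbf{s}\in\bbz_+^r$. Since $\mathbf{g}^{\mathbf{s}}\in Z(\ggg)$ and, by Veldkamp's theorem \cite[Theorem 3.1]{Ve}, $Z(\ggg)$ is free over the $p$-center $Z_p(\ggg)$ with basis $\{\mathbf{g}^{\mathbf{t}}\mid\mathbf{t}\in\Lambda_r\}$, we may write $\mathbf{g}^{\mathbf{s}}=\sum_{\mathbf{t}\in\Lambda_r}\rho^{\mathbf{s}}_{\mathbf{t}}\,\mathbf{g}^{\mathbf{t}}$ with $\rho^{\mathbf{s}}_{\mathbf{t}}\in Z_p(\ggg)$. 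Applying $\varphi$ and using $\varphi(\mathbf{g}^{\mathbf{t}})=\mathbf{f}^{\mathbf{t}}$ together with $\varphi(Z_p(\ggg))=\varphi(Z_p(\tilde{\mathfrak{p}}))$ gives
\[
\mathbf{f}^{\mathbf{s}}=\sum_{\mathbf{t}\in\Lambda_r}a_{\mathbf{t}}\,\mathbf{f}^{\mathbf{t}},\qquad a_{\mathbf{t}}:=\varphi(\rho^{\mathbf{s}}_{\mathbf{t}})\in\varphi(Z_p(\tilde{\mathfrak{p}})).
\]
Thus $\mathbf{f}^{\mathbf{s}}$ lies in $\varphi(\ca)$, where $\ca$ is the $Z_p(\tilde{\mathfrak{p}})$-module generated by $\{\mathbf{g}^{\mathbf{t}}\mid\mathbf{t}\in\Lambda_r\}$ as in the proofs of Lemmas \ref{Veldkamp mod} and \ref{forthcoming}. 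Since the $\mathbf{g}^{\mathbf{t}}$ form part of a $Z_p(\ggg)$-basis of $Z(\ggg)$, the module $\ca$ is free over $Z_p(\tilde{\mathfrak{p}})$ on the $\mathbf{g}^{\mathbf{t}}$; combining this with the injectivity of $\varphi$ on $Z_p(\tilde{\mathfrak{p}})$ and with Lemma \ref{forthcoming} ($\ca\cap\ker\varphi=\{0\}$), the elements $\{\mathbf{f}^{\mathbf{t}}\mid\mathbf{t}\in\Lambda_r\}$ form a $\varphi(Z_p(\tilde{\mathfrak{p}}))$-basis of $\varphi(\ca)$, so the coefficients $a_{\mathbf{t}}$ above are uniquely determined by $\mathbf{f}^{\mathbf{s}}$.

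The remaining --- and, I expect, most delicate --- step is to improve $a_{\mathbf{t}}\in\varphi(Z_p(\tilde{\mathfrak{p}}))$ to $a_{\mathbf{t}}\in Z_0(\ct)$; here I would invoke $\mathcal{M}$-invariance. By \eqref{equiW} and \eqref{twoequation}, the group $\mathcal{M}$ acts on $U(\ggg,e)$ by algebra automorphisms whose fixed subalgebra is $\ct(\ggg,e)$, it stabilizes $\varphi(Z_p(\tilde{\mathfrak{p}}))$, and $\varphi(Z_p(\tilde{\mathfrak{p}}))\cap\ct(\ggg,e)=Z_p(\tilde{\mathfrak{p}})^{\mathcal{M}}=Z_0(\ct)$. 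Since $\mathbf{f}^{\mathbf{s}}$ and all $\mathbf{f}^{\mathbf{t}}$ with $\mathbf{t}\in\Lambda_r$ lie in $\ct(\ggg,e)$, they are fixed by every $g\in\mathcal{M}$; applying $g$ to $\mathbf{f}^{\mathbf{s}}=\sum_{\mathbf{t}}a_{\mathbf{t}}\mathbf{f}^{\mathbf{t}}$ yields $\mathbf{f}^{\mathbf{s}}=\sum_{\mathbf{t}}(g\cdot a_{\mathbf{t}})\mathbf{f}^{\mathbf{t}}$, and uniqueness of the coefficients forces $g\cdot a_{\mathbf{t}}=a_{\mathbf{t}}$ for all $g\in\mathcal{M}$. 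Hence each $a_{\mathbf{t}}$ is an $\mathcal{M}$-fixed element of $\varphi(Z_p(\tilde{\mathfrak{p}}))$, so $a_{\mathbf{t}}\in Z_0(\ct)$ and $\mathbf{f}^{\mathbf{s}}=\sum_{\mathbf{t}\in\Lambda_r}a_{\mathbf{t}}\mathbf{f}^{\mathbf{t}}\in\cz$. Together with the first paragraph this gives $\tilde\cz=\cz$. The main obstacle is exactly the bookkeeping in this last step: one must be sure the $\mathcal{M}$-action in play is by algebra automorphisms with fixed ring $\ct(\ggg,e)$ and restricts on $\varphi(Z_p(\tilde{\mathfrak{p}}))$ to an action with invariants precisely $Z_0(\ct)$ (all of which is built into \eqref{equiW}--\eqref{twoequation}), and that the uniqueness of the expansion that drives the argument is genuinely furnished by Lemma \ref{forthcoming}.
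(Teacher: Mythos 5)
Your proof is correct and follows essentially the same route as the paper: the easy inclusion $\cz\subseteq\tilde\cz$, and the reverse inclusion obtained by transporting Veldkamp's theorem for $Z(\ggg)$ through $\varphi$ so that every element of $\tilde\cz$ is an $\varphi(Z_p(\tilde{\mathfrak{p}}))$-combination of the $\mathbf{f}^{\mathbf{t}}$, then descending the coefficients to $Z_0(\ct)$. Your final step (uniqueness of the expansion via Lemma \ref{forthcoming} plus $\mathcal{M}$-invariance) in fact spells out the equality $\cz=\varphi(\ca)\cap Z(\ct)$, which the paper's proof asserts as following from the definition of $\ca$, so your write-up is, if anything, more detailed at exactly the one non-formal point of the argument.
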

\begin{proof}
 Obviously, $\cz\subseteq \tilde\cz$. We only need to check the reverse inclusion. 

 Recall that $Z_0(\ct)=Z(\ct)\cap \varphi(Z_0(\ggg))$ by the proof of Lemma \ref{Veldkamp mod}, and $\varphi(U(\ggg)^G)\subseteq Z(\ct)$ by definition. We have
\begin{align*}
\tilde\cz&=(Z_0(\ct),Z_1(\ct))=(\varphi(Z_0(\ggg))\cap Z(\ct), \varphi(U(\ggg)^G))\cr
&=(\varphi(Z_0(\ggg))\cap Z(\ct), \varphi(U(\ggg)^G)\cap Z(\ct))\cr
&\subseteq (\varphi(Z_0(\ggg)),\varphi(U(\ggg)^G))\cap Z(\ct)\cr
&=\varphi(Z_0(\ggg), U(\ggg)^G)\cap Z(\ct)\cr
&= \varphi(Z(\ggg))\cap Z(\ct).
\end{align*}
By Veldkamp's Theorem and  the fact $\varphi(Z_0(\ggg))=\varphi(Z_0(\tilde{\mathfrak{p}}))$ in \S\ref{pcen},  we have $\varphi(Z(\ggg))=\varphi(\ca)$. On the other hand, it follows from the definition of $\ca$ that $\cz=\varphi(\ca)\cap Z(\ct)$.
We finally have
$$\tilde\cz\subseteq \varphi(Z(\ggg))\cap Z(\ct)
=\varphi(\ca)\cap Z(\ct)=\cz.$$
The proof is completed.
\end{proof}


\section{Azumaya locus for $\ct(\ggg,e)$}\label{Azumaya locus}

In this section we consider the Azumaya property of the finite $W$-algebra $\ct(\ggg,e)$ in characteristic $p\gg0$.
\subsection{Background}\label{Azumaya background}
We first recall some facts on Azumaya algebras. For more details, the readers are referred to \cite[\S3]{BGl}.
\begin{defn}
A ring $A$  is called an Azumaya algebra over over  its center $Z(A)$ if $A$ is projective as a module  $A\otimes_{Z(A)} A^{\op}$; 
or equivalently,  $A$ is a finitely-generated projective $Z(A)$-module and the natural map $A\otimes_{Z(A)} A^{\op}\rightarrow \End_{Z(A)} A$ is an isomorphism.
\end{defn} In \cite{BGl}, Brown-Goodearl studied the irreducible modules of maximal dimension for a prime Noetherian algebra $A$ which is a finite module over its affine center $Z(A)$. When the base field is algebraically closed, the contractions of the annihilators of these modules to $Z(A)$ constitute the Azumaya locus  \begin{equation}\label{loc}
\mathcal{A}(A):=\{\mathbf{m}\in \Specm(Z(A))\mid A_{\mathbf{m}} \mbox{ is Azumaya over }Z(A)_{\mathbf{m}}\},
\end{equation}{\sl{where $\Specm(Z(A))$ stands for the spectrum of maximal ideals of $Z(A)$;}} $A_{\mathbf{m}}$ and $Z(A)_{\mathbf{m}}$ in \eqref{loc} denote the localization rings of $A$ and $Z(A)$ at $\mathbf{m}$, respectively.
    Moreover, $\mathcal{A}(A)$ is  a Zariski dense subset of  $\Specm(Z(A))$. Denote by $\mathcal{P}(A)$  the non-regular points in $\Specm(Z(A))$.

\begin{theorem} (\cite{BGl})\label{BGl 3.8}
Let $A$ be a prime Noetherian ring, module-finite over its center $Z(A)$. If $A$ is Auslander-regular and Macaulay, and $A_\ppp$ is Azumaya over $Z(A)_\ppp$  for all height $1$ prime ideals $\ppp$ of $Z(A)$ (that is, $\Specm(Z(A))\backslash \mathcal{A}(A)$ has codimension at least $2$ in
$\Specm(Z(A))$), then $\mathcal{A}(A)= \Specm(Z(A))\backslash \mathcal{P}(A)$.
\end{theorem}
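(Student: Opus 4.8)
\medskip

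\noindent\textbf{Proof proposal.} Write $Z=Z(A)$, which is affine and Noetherian (Artin--Tate, Eakin--Nagata), and let $n$ be the PI degree of $A$. Since $A$ is prime, its Goldie quotient ring $Q=A\otimes_Z\Frac(Z)$ is a central simple $\Frac(Z)$-algebra of dimension $n^2$ with centre $\Frac(Z)$ (Posner's theorem); in particular the $Z$-algebra map $\mu\colon A\otimes_Z A^{\op}\to\End_Z(A)$ becomes an isomorphism after applying $-\otimes_Z\Frac(Z)$, so $\ker\mu$ and $\coker\mu$ are $Z$-torsion. The plan is to prove the inclusions $\mathcal{A}(A)\subseteq\Specm(Z)\setminus\mathcal{S}(A)$ and $\Specm(Z)\setminus\mathcal{S}(A)\subseteq\mathcal{A}(A)$ separately.

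For the first inclusion, let $\mathbf{m}\in\mathcal{A}(A)$, so $A_{\mathbf{m}}$ is Azumaya over $Z_{\mathbf{m}}$. A standard homological property of Azumaya algebras gives $\mathrm{gl.dim}\,Z_{\mathbf{m}}=\mathrm{gl.dim}\,A_{\mathbf{m}}$, and $\mathrm{gl.dim}\,A_{\mathbf{m}}\leqslant\mathrm{gl.dim}\,A<\infty$ because central localization cannot raise global dimension and $A$ is Auslander-regular. By the Auslander--Buchsbaum--Serre theorem, a commutative Noetherian local ring of finite global dimension is regular, so $Z_{\mathbf{m}}$ is regular and $\mathbf{m}\notin\mathcal{S}(A)$. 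Only the finiteness of $\mathrm{gl.dim}\,A$ is used here.

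For the reverse inclusion, fix $\mathbf{m}\in\Specm(Z)$ with $Z_{\mathbf{m}}$ regular local of dimension $d$ (the case $d=0$ being trivial). The first step is to show that $A_{\mathbf{m}}$ is $Z_{\mathbf{m}}$-free: since $A$ is Auslander-regular and Macaulay and module-finite over $Z$, it is, as is standard (cf.\ \cite[\S3]{BGl}), a maximal Cohen--Macaulay module over the Cohen--Macaulay ring $Z$, so $\mathrm{depth}_{Z_{\mathbf{m}}}A_{\mathbf{m}}=d$; as $Z_{\mathbf{m}}$ is regular, $\pd_{Z_{\mathbf{m}}}A_{\mathbf{m}}<\infty$, and the Auslander--Buchsbaum formula forces $\pd_{Z_{\mathbf{m}}}A_{\mathbf{m}}=0$. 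Hence $A_{\mathbf{m}}$, and therefore also $(A\otimes_Z A^{\op})_{\mathbf{m}}$ and $\End_{Z_{\mathbf{m}}}(A_{\mathbf{m}})$, are free of finite rank $N^2$ over $Z_{\mathbf{m}}$; in particular $\mu_{\mathbf{m}}$ is injective, its kernel being a $Z_{\mathbf{m}}$-torsion submodule of a free module. Write $C=\coker\mu$ and suppose $C_{\mathbf{m}}\neq 0$. From the presentation $0\to(A\otimes_ZA^{\op})_{\mathbf{m}}\xrightarrow{\mu_{\mathbf{m}}}\End_{Z_{\mathbf{m}}}(A_{\mathbf{m}})\to C_{\mathbf{m}}\to 0$, with the two outer terms free of equal rank, we get $\pd_{Z_{\mathbf{m}}}C_{\mathbf{m}}=1$, so Auslander--Buchsbaum gives $\mathrm{depth}_{Z_{\mathbf{m}}}C_{\mathbf{m}}=d-1$; hence $\dim\mathrm{Supp}(C_{\mathbf{m}})\geqslant d-1$, and since $C_{\mathbf{m}}$ is torsion this dimension is exactly $d-1$, so $\mathrm{Supp}(C_{\mathbf{m}})$ contains a height-one prime, i.e.\ $C_{\mathfrak{p}}\neq 0$ for some height-one prime $\mathfrak{p}$ of $Z$. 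But then $\mu_{\mathfrak{p}}$ is not surjective, so $A_{\mathfrak{p}}$ is not Azumaya over $Z_{\mathfrak{p}}$, contradicting the hypothesis. Therefore $C_{\mathbf{m}}=0$, so $\mu_{\mathbf{m}}$ is an isomorphism, and $A_{\mathbf{m}}$ (being additionally $Z_{\mathbf{m}}$-free) is Azumaya over $Z_{\mathbf{m}}$, i.e.\ $\mathbf{m}\in\mathcal{A}(A)$. Combining the two inclusions yields the asserted equality; since the regular locus of the affine domain $Z$ over the perfect field $\bbk$ is dense open, this also recovers the density of $\mathcal{A}(A)$.

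The main obstacle, in my view, is the first step of the reverse inclusion: passing from the intrinsic noncommutative hypotheses (Auslander-regular and Macaulay) to the commutative conclusion that $Z$ is Cohen--Macaulay and $A$ is a maximal Cohen--Macaulay $Z$-module, hence $Z_{\mathbf{m}}$-free at regular points. This is exactly where the full strength of the Auslander condition and the Macaulay identity $j_A(M)+\mathrm{Kdim}\,M=\mathrm{Kdim}\,A$ is consumed, through a change-of-rings comparison between $\Ext^\bullet_A$ and $\Ext^\bullet_Z$. Once that freeness is in hand, the rest is the elementary support/codimension estimate above, powered by the Auslander--Buchsbaum formula and the hypothesis that Azumaya-ness already holds in codimension one.
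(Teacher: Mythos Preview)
The paper does not give its own proof of this statement: Theorem~\ref{BGl 3.8} is quoted from Brown--Goodearl \cite{BGl} and used as a black box (it is invoked in \S\ref{azumaya proof} to deduce Theorem~\ref{Azumaya Thm}). So there is no proof in the paper to compare your proposal against.

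That said, your sketch is a faithful outline of the actual argument in \cite{BGl}. The inclusion $\mathcal{A}(A)\subseteq\Specm(Z)\setminus\mathcal{S}(A)$ via the equality of global dimensions for an Azumaya algebra and its centre, together with Auslander--Buchsbaum--Serre, is exactly how Brown--Goodearl handle it. For the reverse inclusion your two-step approach is also theirs: first extract from the Auslander-regular and Macaulay hypotheses that $Z$ is Cohen--Macaulay with $A$ a maximal Cohen--Macaulay $Z$-module (so that $A_{\mathbf m}$ is free over any regular $Z_{\mathbf m}$), and then run a support/codimension argument on $\coker\mu$ to push a hypothetical failure of Azumaya-ness down to a height-one prime, contradicting the assumption. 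Your identification of the ``main obstacle'' is accurate: the passage from the noncommutative homological hypotheses on $A$ to the commutative Cohen--Macaulay conclusions on $Z$ and on $A$ as a $Z$-module is precisely the nontrivial content, and it is carried out in \cite[Lemma~2.6, Lemma~3.3]{BGl} via a grade/Ext comparison between $A$-modules and $Z$-modules. The rest of your argument (projective dimension~$1$ for a nonzero torsion cokernel of a map between free modules of equal rank, Auslander--Buchsbaum, and the height computation in a catenary local ring) is correct as written.
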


By means of the above theory, Brown-Goodearl in \cite{BGl} revealed  the close connections between the smooth points in the spectrum of maximal ideals of the centers and the irreducible representations of maximal dimension for  the  quantized enveloping algebras, for the quantized function algebras at a root of unity, as well as for classical enveloping algebras in positive characteristic.

Recall that $\ct(\ggg,e)$ is a prime Noetherian ring, and module-finite over its center  $Z(\ct)$ by Lemma \ref{Noe Pri} and Theorem \ref{prem}(4). Thanks to \cite[Proposition 3.1]{BGl}, we know that for any irreducible $\ct(\ggg,e)$-module $V$, the dimension of $V$ is  maximal if and only if $\ct(\ggg,e)$ is Azumaya with respect to $\mathbf{m}=\text{Ann}_{\ct(\ggg,e)}(V)\cap Z(\ct)$ (where $\text{Ann}_{\ct(\ggg,e)}(V)$ denotes the annihilator of $V$ in $\ct(\ggg,e)$). That is, $\ct(\ggg,e)_{\mathbf{m}}$ is Azumaya over $Z(\ct)_{\mathbf{m}}$. We shall say that $\mathbf{m}$ lies in the Azumaya locus.

We shall study the following question.

\begin{question}\label{azumayaproperty} How about the relationship between the smooth locus and  the Azumaya locus reflecting the maximal dimensional representations of $\ct(\ggg,e)$?
 \end{question}


\subsection{Filtered rings}We will start with some basics on filtrations; see \cite[\S2]{Bj} for more details.

A ring $A$ with an (increasing) positive filtration $\{A_i\mid i\in \bbz_+\}$ such that $A_i\cdot A_j\subseteq A_{i+j}$ is called a positively filtered ring.
If $\bigcap_{i\in \bbz_+} A_i=0$ we say that $\{A_i\mid i\in \bbz_+\}$ is a separated filtration.
Then we can define a graded ring $\Gr(A)$ associated to $A$ as follows. Set $A_{-1}=0$, and define the additive group $\Gr(A):=\bigoplus_{i\in \bbz_+} A_i/A_{i-1}$. Given $\bar a_i\in A_i/A_{i-1}$ and $\bar a_j\in A_j/A_{j-1}$, the multiplication of $\bar a_i$ and $\bar a_j$ in $\Gr(A)$ is given by $\bar a_i \bar a_j:=\overline{a_ia_j}\in A_{i+j}/A_{i+j-1}$.

The filtration on a ring $A$ enables us to construct a filtered topology on $A$, using some distance function
$d(x,y)=2^i$ for a pair $x,y$ in $A$ such that $x-y$ 
belongs to
$A_i/A_{i-1}$. Since $\bigcap_{i\in \bbz_+}A_i=0$ we have $d(x,y)>0$ if $x\neq y$.
We may then refer to closed subsets of $A$ with respect to the filtered topology, and further define the corresponding induced topology and quotient topology as in \cite[\S2]{Bj}.

The filtration on $A$ satisfies the closure condition if
every finitely generated left or right ideal is a closed subset of $A$. The filtration satisfies the strong closure condition if the following holds: for every finite subset $x_1,\cdots,x_n$ of $A$ and
integers $i_1,\cdots,i_n$, it follows that the subsets of $A$ defined by
$A_{i_1}x_1+\cdots+A_{i_n}x_n$, respectively $x_1A_{i_1}+\cdots+x_nA_{i_n}$ are closed. The filtration on $A$ satisfies the weak comparison condition if the quotient topology is equal to the induced topology for every finitely-generated left or right ideal. The filtration on $A$ satisfies the comparison condition if the following holds: for every finite set $x_1,\cdots,x_n$ in $A$ there
exists an integer $m$ such that $A_j\cap(Ax_1+\cdots+Ax_n)\subseteq A_{j+m}x_1+\cdots+A_{j+m}x_n$
resp. $(x_1A+\cdots+x_nA)\cap A_j\subseteq x_1A_{j+m}+\cdots+x_n A_{j+m}$.

\begin{defn} A filtration $\{A_i\mid i\in \mathbb{Z}_+\}$ on $A$ is called Zariskian if $\Gr(A)$ is Noetherian and the following equivalent conditions hold.
  \begin{itemize}
\item[(1)] Strong closure condition;
\item[(2)] Closure and comparison conditions;
\item[(3)] Closure and weak comparison conditions.
\end{itemize}
\end{defn}

Furthermore, we can construct the graded Rees ring $\sR(A)$ associated to $A$. First define the additive group $\sR(A):=\bigoplus_{i\in \bbz_+} A_i$. Set $\sr_i$ to be the map from the filtered space $A_i$ onto the $i$-th homogeneous space $A_i$ of the associated graded space $\Gr_i(\sR(A))=A_i$.
The product in $\sR(A)$ is defined via
$$ \sr_i(v)\sr_j(w):=\sr_{i+j}(v\cdot w) \mbox{ for } v\in A_i, w\in A_j$$
for any pair $(i,j)\in \mathbb{Z}_+^2$.
Then the graded ring $\sR(A)$ is called the associated Rees ring of $A$.


Let $\sR(\ct)$ and $\sR(U)$ denote the Rees rings associated to $\ct(\ggg,e)$ and $U(\ggg,e)$, respectively. Then we have

\begin{lemma} \label{pbw filt} The following statements hold.
  \begin{itemize}
\item[(1)] The Rees rings $\sR(\ct)$ and $\sR(U)$ are Noetherian rings.
\item[(2)] The Kazhdan filtrations on $\ct(\ggg,e)$ and $U(\ggg,e)$ are both Zariskian.
 \end{itemize}
 \end{lemma}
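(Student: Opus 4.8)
The plan is to reduce both assertions to the single fact that, with respect to the Kazhdan filtration, the associated graded rings $\mathrm{gr}(\ct(\ggg,e))$ and $\mathrm{gr}(U(\ggg,e))$ are Noetherian, and then to feed this into the standard dictionary relating a positively filtered ring, its associated graded ring and its Rees ring. First I would record the two graded identifications: by \eqref{grse} we have $\mathrm{gr}(\ct(\ggg,e))\cong S(\ggg_e)$, a polynomial algebra in $\sfd$ variables, which is Noetherian by the Hilbert basis theorem; and the PBW theorem \cite[Lemma 7.7]{GT} for the extended algebra (equivalently, Theorem \ref{prem}(2) together with the fact that $Z_p(\mathfrak{a})$ is a polynomial algebra on the Kazhdan-homogeneous generators $x^p-x^{[p]}$, $x\in\mathfrak{a}$) yields $\mathrm{gr}(U(\ggg,e))\cong S(\tilde{\mathfrak{p}})$, again a polynomial algebra, hence Noetherian. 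I would also note that in both cases the Kazhdan filtration is positive and exhaustive, with $\mathrm{F}_0=\mathds{k}$ and $\mathrm{F}_{-1}=0$.

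For part (1), I would invoke the canonical presentation of the Rees ring: for a positively filtered ring $A$ the element $t:=\sr_1(1_A)\in\sR(A)$ of degree $1$ is central and a non-zero-divisor (because the maps $A_i\hookrightarrow A_{i+1}$ are injective), with $\sR(A)/t\,\sR(A)\cong\mathrm{gr}(A)$ as graded rings and $\sR(A)/(1-t)\,\sR(A)\cong A$. Since a $\bbz_+$-graded ring is Noetherian exactly when it satisfies ACC on graded ideals, the standard lifting argument (lift finite homogeneous generating sets modulo $t$ and induct on degree; see \cite{MR}) shows that $\mathrm{gr}(A)$ Noetherian forces $\sR(A)$ Noetherian. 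Applying this to $A=\ct(\ggg,e)$ and $A=U(\ggg,e)$ with the Kazhdan filtration, and using the graded identifications above, one gets that $\sR(\ct)$ and $\sR(U)$ are Noetherian.

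For part (2), I would use the criterion that a filtration $\mathrm{F}A$ is Zariskian precisely when $\sR(A)$ is Noetherian and $\mathrm{F}_{-1}A$ lies in the Jacobson radical of $\mathrm{F}_0A$; for the positive Kazhdan filtration the latter condition is vacuous ($\mathrm{F}_{-1}=0$) and the former is part (1), so the Kazhdan filtration is Zariskian in both cases. If instead one argues directly from the definition given above — $\sR(A)$ Noetherian plus separatedness of every good filtration on a finitely generated $A$-module — then after part (1) it remains to observe that a good filtration $\{M_m\}$ on a finitely generated $M$ corresponds to a finitely generated graded $\sR(A)$-module $\sR(M)$, and that $\bigcap_m\mathrm{F}_mM=0$ follows from a Krull-intersection / Artin--Rees argument for the central non-zero-divisor $t$ acting on $\sR(M)$, the positivity of the filtration ensuring the eventual value of the intersection vanishes.

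The only point requiring care — rather than a genuine obstacle — is the implication ``$\mathrm{gr}(A)$ Noetherian $\Rightarrow\sR(A)$ Noetherian'': one must make sure the Rees ring is formed from an exhaustive filtration so that $t$ is a non-zero-divisor and $\sR(A)/t\,\sR(A)$ is genuinely $\mathrm{gr}(A)$, and one must use the graded form of the Hilbert basis argument. Everything else is just assembling ingredients already available: \eqref{grse}, \cite[Lemma 7.7]{GT}, Theorem \ref{prem}(2), and the standard filtered/graded-ring machinery.
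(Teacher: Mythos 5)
Your proposal is correct and follows essentially the same route as the paper: both arguments rest on the Kazhdan filtration being positive with Noetherian (polynomial) associated graded ring, and then on the standard fact that such a filtration is Zariskian with Noetherian Rees ring — which the paper simply cites from Bj\"ork's Remark 2.22 while you unpack it via the $t$-presentation of $\sR(A)$ and the Jacobson-radical criterion. The only cosmetic difference is that the paper first proves $U(\ggg,e)$ is Noetherian from $U(\ggg,e)\cong\ct(\ggg,e)\otimes_\bbk Z_p(\mathfrak{a})$ and then passes to the graded rings, whereas you read off Noetherianity of the associated graded rings directly from the PBW isomorphisms.
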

\begin{proof}
%
In virtue of \eqref{grp} and Theorem \ref{prem}(4), \cite[Proposition 7.2]{AM} entails that $\text{gr}(\ct(\ggg,e))$ is a Noetherian ring. Taking Lemma \ref{Noe Pri}(2), Theorem \ref{prem}(2,3) and  \cite[Corollary 7.7]{AM} into consideration, we see that $U(\ggg,e)$ and $\text{gr}(U(\ggg,e))$ are also Noetherian.

Note that the Kazhdan filtrations on $\ct(\ggg,e)$ and $U(\ggg,e)$ are positive by \S\ref{212}, then it follows from \cite[Remark 2.22]{Bj} that the Kazhdan filtrations on  $\ct(\ggg,e)$ and $U(\ggg,e)$ are both Zariskian,  and the corresponding Rees rings $\sR(\ct)$ and $\sR(U)$ are Noetherian rings.
  \end{proof}

\subsection{Auslander-regular and Maculay rings}
Let $A$ be a ring, and $M$ an $A$-module. Then the grade number of $M$ is defined by
$$\sj_A(M):=\min\{ j\in \mathbb{Z}_+\mid \Ext^j_A(M, A)\ne 0\}.$$

\begin{defn}
Let $A$ be a Noetherian ring with finite global dimension. For every finitely-generated (right or left) $A$-module $M$, every $j\in\mathbb{Z}_+$ and every submodule $N$ of $\Ext^j_A(M,A)$, if one has $\sj_A(N)\geqslant\sj_A(M)$, i.e., $\Ext_A^i(N,A)=0$ for all non-negative integer $i<j$, then $A$ is said to be  Auslander-regular.
\end{defn}
\begin{defn}
A Macaulay ring $A$ is a ring for which the identity
 $$ \text{K.}\dim M +\sj_A(M)=\text{K.}\dim A$$
 holds for every non-zero finitely-generated $A$-module $M$, where $\text{K.}\dim M$ denotes the Krull dimension of $M$.
\end{defn}

Now we turn to the case of finite $W$-algebras. In fact, we have
\begin{prop} \label{Auslander Cond}
 The ring $\ct(\ggg,e)$ is Auslander-regular and Macaulay, with Krull and  global dimension at most  $\dim \ggg_e$.
\end{prop}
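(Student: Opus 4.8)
The plan is to exploit the isomorphism $\gr(\ct(\ggg,e))\cong S(\ggg_e)$ from \eqref{grse} under the Kazhdan grading, and then lift the Auslander-regular and Macaulay properties (together with the dimension bound) from the associated graded ring to $\ct(\ggg,e)$ itself. This is the standard philosophy: a filtered ring inherits homological good behaviour from its associated graded ring, provided the filtration is well-behaved. By Lemma \ref{pbw filt} the Kazhdan filtration on $\ct(\ggg,e)$ is Zariskian and the Rees ring $\sR(\ct)$ is Noetherian, so the machinery of filtered/graded transfer applies. The key external inputs are: (i) the polynomial ring $S(\ggg_e)\cong\mathds{k}[x_1,\dots,x_\sfd]$ is commutative regular, hence Auslander-regular and (Cohen–)Macaulay, with Krull dimension and global dimension both equal to $\dim\ggg_e=\sfd$; and (ii) the transfer theorems of Björk (see \cite{Bj}) which state that if the Rees ring $\sR(A)$ — equivalently $\gr(A)$ — is Auslander-regular (resp. Macaulay) then so is $A$, and $\mathrm{gl.dim}(A)\leqslant\mathrm{gl.dim}(\gr(A))$, $\mathrm{K.dim}(A)\leqslant\mathrm{K.dim}(\gr(A))$.

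\textbf{Steps.} First I would record that $S(\ggg_e)$ is a polynomial algebra in $\sfd=\dim\ggg_e$ variables, hence commutative Noetherian regular of Krull dimension $\sfd$; in particular it is Auslander-regular, Macaulay, and has global dimension $\sfd$ (Auslander–Buchsbaum–Serre for commutative regular rings). Second, using \eqref{grse} I identify $\gr(\ct(\ggg,e))\cong S(\ggg_e)$, so $\gr(\ct(\ggg,e))$ has all these properties. Third, I invoke Lemma \ref{pbw filt}: the Kazhdan filtration on $\ct(\ggg,e)$ is Zariskian, hence positive and separated, and the Rees ring $\sR(\ct)$ is Noetherian; this is exactly the hypothesis needed to apply the filtered-to-graded transfer. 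Fourth, I apply the transfer results for Zariskian filtered rings (cf. \cite{Bj}, and compare the treatment in \cite{BGl}, \cite{MR}): since $\gr(\ct(\ggg,e))$ is Auslander-regular, so is $\ct(\ggg,e)$; since $\gr(\ct(\ggg,e))$ is Macaulay, so is $\ct(\ggg,e)$; and the global dimension and Krull dimension of $\ct(\ggg,e)$ are bounded above by those of $\gr(\ct(\ggg,e))$, namely by $\dim\ggg_e$. Combining these yields the proposition.

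\textbf{Main obstacle.} The substantive point — and the step I would need to justify most carefully — is the precise form of the filtered-to-graded transfer theorem: one must ensure that the Kazhdan filtration satisfies exactly the hypotheses (Zariskian, or equivalently complete separated positive filtration with Noetherian Rees ring) under which Björk's transfer of the Auslander and Macaulay conditions is valid, and that these transfer statements are stated for two-sided Noetherian rings of the kind at hand. In particular, the Macaulay transfer requires that the Rees ring itself (not merely $\gr$) be Macaulay; since $\sR(\ct)$ is a graded ring with $\sR(\ct)/t\sR(\ct)\cong\gr(\ct(\ggg,e))\cong S(\ggg_e)$ and $\sR(\ct)[t^{-1}]\cong\ct(\ggg,e)[t,t^{-1}]$, one argues that $\sR(\ct)$ is Macaulay because $S(\ggg_e)$ is and $t$ is a regular central element, so the dimension bookkeeping works out. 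The remaining verifications — Noetherianity, primeness, module-finiteness over the center — are already available from Lemma \ref{Noe Pri} and Theorem \ref{prem}(4), so no real difficulty arises there; the whole argument is a careful application of standard homological transfer along a Zariskian filtration.
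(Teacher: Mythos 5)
Your route for Auslander-regularity and for the bound on the global dimension is essentially the paper's: the paper also uses $\gr(\ct(\ggg,e))\cong S(\ggg_e)$, the fact that the Kazhdan filtration is Zariskian (Lemma \ref{pbw filt}), \cite[Corollary 7.6.18]{MR} for $\text{gl.}\dim\ct(\ggg,e)\leqslant\dim\ggg_e$, and \cite[Theorem 3.9 and Remark 3.9]{Bj} to transfer the Auslander condition from the polynomial associated graded ring. Where you diverge is the Macaulay property, and this is where your argument has a genuine gap. The transfer theorems you invoke give the Auslander condition and the dimension inequalities, but they do not give the Macaulay condition as defined in this paper, namely $\text{K.}\dim M+\sj_A(M)=\text{K.}\dim A$ with \emph{Krull} dimension; the standard filtered-to-graded transfer of a Cohen--Macaulay-type condition is formulated for GK-dimension. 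Your Rees-ring sketch ($t$ regular central, $\sR(\ct)/t\sR(\ct)\cong\gr(\ct(\ggg,e))$, $\sR(\ct)[t^{-1}]\cong\ct(\ggg,e)[t,t^{-1}]$) only shifts the problem: you would still have to relate $\sj$ and $\text{K.}\dim$ of a finitely generated $\ct(\ggg,e)$-module to those of associated modules over $\sR(\ct)$ and $\gr(\ct(\ggg,e))$, and that ``dimension bookkeeping'' is exactly the unproved content, not a routine verification.

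The paper closes this gap differently: it first shows $\ct(\ggg,e)$ is a fully bounded Noetherian ring, using that it is prime Noetherian PI and module-finite over its center (Lemma \ref{Noe Pri}, Proposition \ref{PI and Central S}, Theorem \ref{prem}(4), and \cite[Proposition 9.1(a)]{GlWa}), and then deduces the Macaulay property from Auslander-regularity via \cite[Corollary 4.5]{StZ}; the equality of Krull and global dimension comes from \cite[Theorem 3.9]{T}. So the PI/FBN structure you set aside as ``no real difficulty'' is in fact the ingredient that makes the Krull-dimension Macaulay statement accessible. To repair your version, either prove the GK-dimension Cohen--Macaulay transfer along the Zariskian filtration and then use module-finiteness over the affine center to identify GK- and Krull dimensions of finitely generated modules, or simply follow the FBN plus Stafford--Zhang route of the paper.
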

\begin{proof}
Let us first investigate the global dimension of $\ct(\ggg,e)$, which is denoted by $\text{gl.}\dim \ct(\ggg,e)$.
Recall that the Kazhdan filtration on $\ct(\ggg,e)$ is an increasing positive filtration. By \eqref{grse} we know that its graded algebra $\Gr(\ct(\ggg,e))$ is isomorphic to a polynomial algebra in $\dim\ggg_e$ variables, thus  $\Gr(\ct(\ggg,e))$ is a commutative Noetherian ring with global dimension $\dim\ggg_e$. According to \cite[Corollary 7.6.18]{MR}, we have $\text{gl.}\dim \ct(\ggg,e)\leqslant\dim\ggg_e$.

Recall that $\ct(\ggg,e)$ is a prime PI and Noetherian ring (see Lemma \ref{Noe Pri}(2,3) and Proposition \ref{PI and Central S}(4)), and module-finite over its center (see Theorem \ref{prem}(4)). Therefore, $\ct(\ggg,e)$ is a fully bounded Notherian ring (FBN) by \cite[Proposition 9.1(a)]{GlWa}. Note that the Kazhdan filtration on $\ct(\ggg,e)$ is Zariskian (see Lemma \ref{pbw filt}(2)). Since $\Gr(\ct(\ggg,e))$ is isomorphic to a polynomial algebra, then the ring $\Gr(\ct(\ggg,e))$ is Auslander-regular. By \cite[Theorem 3.9 and Remark 3.9]{Bj} we see that $\ct(\ggg,e)$ is also Auslander-regular. Furthermore, it is Macaulay by \cite[Corollary 4.5]{StZ}. Note that in this case, the global dimension of $\ct(\ggg,e)$ is equal to the Krull dimension (cf. \cite[Theorem 3.9]{T}). The proof is completed.
\end{proof}

\subsection{Azumaya locus of $\ct(\ggg,e)$}
Let $V$ be an irreducible $\ct(\ggg,e)$-module, and consider the corresponding central character (which is an algebra homomorphism) $\zeta_V : Z(\ct) \rightarrow \bbk$. 
As remarked in \S\ref{Azumaya background}, the  Azumaya locus in \eqref{loc} becomes
$$\mathcal{A}(\ct(\ggg,e))
 =\{\ker(\zeta_V)|\dim V=\mbox{the maximal dimension of all irreducible}~ \ct(\ggg,e)\mbox{-modules}\}.$$
It is well-known that Azumaya locus is a non-empty open subset.

To prove Theorem \ref{Azumaya Thm}, we  make some necessary preparations in the next subsections.

\subsection{The definition of Slodowy slices
}\label{Slo}
We first collect up some facts on Slodowy slices (see \cite{Jan3}, \cite{Pre1} and \cite{Slo}). 

 Recall that under the assumption $p\gg0$ the Jacobson-Marozov Theorem holds, which says, for every nilpotent element $e\in \ggg$ there exists an $\frak{sl}_2$-triple $(e,h,f)$ (cf. \cite[\S5.3-5.5]{Cart}). Furthermore, such an $\frak{sl}_2$-triple is unique up to the conjugation of  $C_G(e)^\circ$ (cf. {\sl{ibid}}).
Then we can set  $\cs=e+\kkk$ with $\kkk:=\ker(\ad\,f)$, which is usually called the Slodowy slice through the adjoint orbit of $e$ (see \cite[\S7.4]{Slo}).
Moreover, associated with the nilpotent $e$, one can have the weighted Dynkin diagram $\Delta(e)$ (cf. \cite[\S5.6]{Cart}),  as listed in \cite[Lemma 7.3.2]{Slo}. And two nilpotent elements lie in the same $G$-orbit if and only if they share the same Dynkin diagram (cf. \cite[\S5.6]{Cart}). The technique in the arguments of \cite[\S7.3-7.4]{Slo} is valid for our case (note that in \cite{Slo} there is a weaker requirement on $p$ which is zero or bigger than $4\textsf{h}-2$, where $\textsf{h}$ is the Coxeter number of $G$). In particular, \cite[Proposition 7.4.1 and Corollary 7.4.1]{Slo} are true for our case.

Recall that $r=\rank\,\ggg$, and we always assume that $p\gg0$. Since the Harish-Chandra's theorem shows that $\HC:~U(\ggg)^G\cong U(\hhh)^{W\centerdot}$ as $\bbk$-algebras, we continue to choose  $\{g_1,\cdots,g_r\}$ as a set of algebraically independent generators in $U(\ggg)^G$ as in \S\ref{3.2}. Let $\text{gr}_S(g_i)=\tilde g_i$ be the images of $g_i$ associated to the standard grading of $U(\ggg)$, and set $\kappa$ to be the Killing isomorphism. By \cite[\S7.12]{Jan3}, the Chevalley Restriction Theorem holds in our situation. In particular, $S(\ggg^*)^G\cong S(\hhh^*)^{W_\centerdot}$ as graded algebras under the standard grading. In virtue of \cite[\S3.12]{Slo}, the elements $\kappa(\tilde g_1),\cdots,\kappa(\tilde g_r)$ form a free generating set for $S(\ggg^*)^G$.

Under above assumption, we have the adjoint quotient map:
\begin{equation}\label{Steinberg map}
\varpi:\ggg\rightarrow \bba^r
\end{equation}
which sends $x\in\ggg$ to $(\kappa(\tilde g_1)(x),\cdots,\kappa(\tilde g_r)(x))$.
In virtue of \cite[\S7.3-7.4]{Slo}, we have the following result.
    \begin{lemma} (\cite{Slo})
    \label{Tran sli}  Under the assumption $p\gg0$, we consider the restriction of $\varpi$ to $\cs$ with
    $$\varpi_\cs: \cs\rightarrow  \bba^r.$$
Then the morphism $\varpi_\cs$ is faithfully flat.
\end{lemma}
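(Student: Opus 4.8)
The plan is to view $\varpi_\cs$ as a morphism from the smooth (hence Cohen--Macaulay) affine space $\cs=e+\kkk$ to the smooth affine space $\bba^r$, to prove flatness by a fiber-dimension (``miracle flatness'') argument, and then to upgrade this to faithful flatness by a short surjectivity argument using a contracting $\bbk^\times$-action. The structural input I would use --- all available in our setting under (H1) and (H4) by the remarks preceding the lemma --- is: the orbit map $\alpha\colon G\times\cs\to\ggg$, $(g,s)\mapsto\Ad(g)s$, is smooth (this is the transversality $\ggg=\im(\ad e)\oplus\kkk$ propagated by a contracting torus); the nilpotent cone $\cn=\varpi^{-1}(0)$ is irreducible of dimension $\dim\ggg-r$; and there is the Kazhdan--Slodowy torus $\mu(t):=t^2\,\Ad(\gamma(t))^{-1}$, where $\gamma$ is the cocharacter of the $\mathfrak{sl}_2$-triple, which fixes $e$, stabilizes $\cs$, contracts $\cs$ onto $e$ as $t\to0$ (its weights on $\kkk$ are strictly positive), and satisfies $\varpi\circ\mu(t)=(t^{2d_1},\dots,t^{2d_r})\cdot\varpi$ with $d_i>0$ the degrees of the basic invariants (because each $\kappa(\tilde g_i)$ is $\Ad$-invariant and homogeneous). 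Thus $\varpi_\cs$ is equivariant for a $\bbk^\times$-action on $\bba^r$ with strictly positive weights.

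The first substantial step is to determine the fiber dimensions. The central fiber is $\varpi_\cs^{-1}(0)=\cs\cap\cn$; since $\alpha^{-1}(\cn)=G\times(\cs\cap\cn)$ and $\alpha$ restricts to a smooth surjection onto the nonempty open subset $\cn\cap(G\cdot\cs)$ of $\cn$ with all fibers of dimension $\dim\ggg_e$, comparing dimensions gives $\dim(\cs\cap\cn)=\dim\ggg_e-r$. For an arbitrary $a\in\bba^r$ with $\varpi_\cs^{-1}(a)$ nonempty, every component $Z$ of $\varpi_\cs^{-1}(a)$ satisfies $\dim Z\geqslant\dim\ggg_e-r$ by Krull's height theorem, since it is cut out in $\cs\cong\bba^{\dim\ggg_e}$ by $r$ equations. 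For the opposite inequality (for $a\neq0$) I would form $W:=\overline{\mu(\bbk^\times)\cdot Z}$, an irreducible $\bbk^\times$-stable subvariety of $\cs$ of dimension $\dim Z+1$ which contains $e$ and maps dominantly under $\varpi_\cs$ onto the affine line $\overline{\bbk^\times\cdot a}$; its fiber over $0$, contained in $\cs\cap\cn$, then has dimension $\geqslant\dim W-1=\dim Z$, forcing $\dim Z\leqslant\dim(\cs\cap\cn)=\dim\ggg_e-r$ (the case $a=0$ being trivial). Hence every nonempty fiber of $\varpi_\cs$ is equidimensional of dimension $\dim\cs-\dim\bba^r$.

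With $\cs$ Cohen--Macaulay, $\bba^r$ regular, and all nonempty fibers of the expected pure dimension, the fiber-dimension criterion for flatness over a regular base yields that $\varpi_\cs$ is flat; alternatively one may simply quote Slodowy's Proposition~7.4.1 and Corollary~7.4.1, which hold here. Being flat and of finite type, $\varpi_\cs$ is an open map, so its image is a nonempty open $\bbk^\times$-stable subset of $\bba^r$ containing $0=\varpi_\cs(e)$; since $\lim_{t\to0}(t^{2d_1},\dots,t^{2d_r})\cdot a=0$ for every $a\in\bba^r$, openness and $\bbk^\times$-stability force the image to be all of $\bba^r$. Thus $\varpi_\cs$ is flat and surjective, i.e. faithfully flat.

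I expect the real difficulty to lie entirely in the structural input of the first step: checking that Slodowy's theory of transverse slices --- smoothness of the orbit map $\alpha$, the Jacobson--Morozov $\mathfrak{sl}_2$-triple and its uniqueness up to $C_G(e)^\circ$, and especially the complete-intersection fact $\dim\cn=\dim\ggg-r$ --- genuinely survives the weakening of the characteristic hypothesis from ``$p=0$ or $p>4\textsf{h}-2$'' to (H1)$+$(H4). This is precisely the point flagged in the paragraph before the lemma; once it is granted, the flatness and surjectivity arguments are formal, and the most economical write-up just cites \cite[Proposition~7.4.1 and Corollary~7.4.1]{Slo} for flatness and supplies the contracting-$\bbk^\times$-action argument only for surjectivity.
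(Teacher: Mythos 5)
Your proposal is correct and is essentially the paper's own route: the paper handles this lemma by citing Slodowy \cite[\S 7.3--7.4, in particular Proposition 7.4.1 and Corollary 7.4.1]{Slo} together with the remark that his arguments survive under (H1)+(H4), and your flatness step (the fiber-dimension count via the contracting action and then miracle flatness over the regular base $\bba^r$) is just a self-contained reconstruction of that same argument, with the genuine issue correctly identified as checking that Slodowy's structural inputs persist when ``$p=0$ or $p>4\textsf{h}-2$'' is weakened to (H4). Your surjectivity step --- openness of the flat morphism plus $\bbk^\times$-equivariance with strictly positive weights on $\bba^r$ --- coincides with the argument the paper itself writes out in the proof of Lemma \ref{Tran gen sli}.
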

\noindent
Consequently, $\varpi_\cs$ is surjective and all the  fibers of $\varpi_\cs$ have the same dimension $\sfd-r$ with $\sfd=\dim \ggg_e$.

Let $N=\#\Phi^+$ denote the number of positive roots of $G$ with respect to a maximal torus $T$.
   An element $X\in\ggg$ is called regular if the adjoint orbit $\text{Ad}\,G(X)$ has the maximal dimension which is equal to $2N$. Equivalently, the dimension of $\ggg_X$ (the centralizer of $X$ in $\ggg$) is the smallest one which is equal to $r$. By classical result, the regular elements constitute an open subset of $\ggg$, which is denoted by $\ggg_{\text{reg}}$. Moreover, it follows from \cite[Theorem 4.12]{Ve} that the complement of $\ggg_{\text{reg}}$ has codimension $3$ in $\ggg$.

   \subsection{Some results on Slodowy slices
   }  Now we continue the discussion on Slodowy slices under the assumption $p\gg0$. 
   We have the following result, which comes from a version over the field of complex numbers in \cite[Theorem 5.4]{Pre1}.

   \begin{theorem} (cf. \cite{Pre1} and \cite{Slo}) \label{premet} Suppose $G$ satisfies the assumption $p\gg0$. 
   Let $\cs$ be as in Lemma \ref{Tran sli} 
   and
   $$\xi=(\xi_1,\xi_2,\cdots,\xi_r)\in \bba^r, \qquad\varpi_\cs^{-1}(\xi)=\{X\in \cs\mid \varpi_\cs(X)=\xi\}.$$
   Then the following statements hold.
   \begin{itemize}
   \item[(1)] The closed set $\varpi_\cs^{-1}(\xi)$ of $\cs$ is irreducible and  of dimension $\sfd-r$.

   \item[(2)] The fiber $\varpi_\cs^{-1}(\xi)$ is normal. Let $Y\in \varpi_\cs^{-1}(\xi)$. Then $Y\in\ggg_\reg$ if and only if $Y$ is a smooth point of $\varpi_\cs^{-1}(\xi)$.
       \end{itemize}
       \end{theorem}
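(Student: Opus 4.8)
The plan is to deduce Theorem \ref{premet} essentially by reduction to the complex case treated in \cite[Theorem 5.4]{Pre1} (or rather its Slodowy-theoretic counterpart), supplemented by the faithful flatness of $\varpi_\cs$ established in Lemma \ref{Tran sli} and Lemma \ref{Tran gen sli}, together with the geometry of $\ggg_\reg$. First I would record the input facts: $\varpi_\cs\colon\cs\to\bba^r$ is faithfully flat with all fibers of pure dimension $\sfd-r$; the slice $\cs$ is smooth (being an affine space in the Slodowy case, or a transverse slice in the $(\mathrm{H2})+(\mathrm{H5})$ case, cf. Lemma \ref{Jan sli}); and the complement of $\ggg_\reg$ has codimension $3$ in $\ggg$ by \cite[Theorem 4.12]{Ve}. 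Since $\cs$ meets $\co_e$ transversally and $G\times\cs\to\ggg$ is smooth, intersecting with $\ggg\setminus\ggg_\reg$ shows $\cs\setminus(\cs\cap\ggg_\reg)$ has codimension at least $3$ in $\cs$, hence codimension at least $2$ in each fiber $\varpi_\cs^{-1}(\xi)$ once we know the fibers are equidimensional of dimension $\sfd-r$.

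For part (1), irreducibility of $\varpi_\cs^{-1}(\xi)$ should follow by the same argument as over $\bbc$: using the $\bbk^\times$-action $\sigma(t)$ with positive weights on $\cs$ (all $m_i+2>0$), every fiber is contractible onto the fiber over the fixed point, and the generic fiber $\varpi_\cs^{-1}(\xi)$ for $\xi$ corresponding to a regular semisimple class is a single $C_G(e)$-type homogeneous piece which is irreducible; then the $\bbk^\times$-equivariant degeneration forces all fibers to be irreducible of the common dimension $\sfd-r$. Alternatively one can quote that $\cs$ is a complete intersection in itself cut out by $r$ equations from a smooth variety of dimension $\sfd$, so each fiber is a set-theoretic complete intersection of pure dimension $\sfd-r$, and irreducibility is inherited from the complex picture by the standard ``reduction mod $p$'' semicontinuity (the relevant schemes are all defined over an admissible ring $A$, and the generic fiber is geometrically irreducible, so for $p\gg0$, resp. under the stated hypotheses by the Goodwin--Topley refinement, the special fiber is irreducible).

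For part (2), normality is the heart of the matter. The strategy is: (i) $\cs$ is smooth and $\varpi_\cs$ is flat, so $\varpi_\cs^{-1}(\xi)$ is Cohen--Macaulay (a flat fiber of a CM-to-regular morphism, or directly a complete intersection in the affine space $\cs$), hence satisfies Serre's condition $(S_2)$; (ii) the singular locus of $\varpi_\cs^{-1}(\xi)$ is exactly $\varpi_\cs^{-1}(\xi)\setminus\ggg_\reg$ --- this is the content of the ``$Y$ smooth iff $Y\in\ggg_\reg$'' assertion, which one proves by the Jacobian criterion: at a point $Y\in\cs$ the differential $d_Y\varpi_\cs$ has rank $r$ precisely when $Y$ is regular, because the differentials of the basic invariants $g_i$ are linearly independent at $Y$ iff $Y\in\ggg_\reg$ (this is Kostant-type; under (H4) or (H2)+(H5) the invariant theory works as in characteristic zero), and transversality of $\cs$ to orbits transports this to the restricted map; (iii) by step just above, this singular locus has codimension $\geq 2$ in $\varpi_\cs^{-1}(\xi)$, so $\varpi_\cs^{-1}(\xi)$ is regular in codimension $1$, i.e. satisfies $(R_1)$; (iv) Serre's criterion $(R_1)+(S_2)$ then gives normality.

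The main obstacle I anticipate is step (ii)--(iii): carefully verifying, in positive characteristic under only the hypotheses (H1)+(H4) or (H2)+(H5), that the singular locus of the fiber is precisely $\varpi_\cs^{-1}(\xi)\cap(\ggg\setminus\ggg_\reg)$ and that it really has codimension $\geq 2$. Over $\bbc$ this is Premet's \cite[Theorem 5.4]{Pre1}; transplanting it requires that (a) the basic invariants $g_i$ and the morphism $\varpi_\cs$ are all defined over an admissible ring, (b) the relevant loci (singular locus of the fiber, complement of $\ggg_\reg$) have the expected dimensions after base change to $\bbk$, which is where \cite[Theorem 4.12]{Ve}'s codimension-$3$ statement and the faithful flatness of $\varpi_\cs$ are used to avoid any drop, and (c) in the non-Slodowy case one must genuinely work with the good transverse slice $\cs$ from Lemma \ref{Jan sli} rather than $e+\ker(\ad f)$, keeping track of the positive $\bbk^\times$-weights $m_i+2$. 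I would handle (c) by systematically replacing every appeal to the $\mathfrak{sl}_2$-grading in \cite{Slo, Pre1} with the cocharacter grading $\ggg(i;\tau)$ as set up in Remark \ref{p very good}, and I would isolate the codimension bound as the one technical lemma to prove in full.
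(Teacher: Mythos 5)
Your plan for part (2) is a genuinely different route from the paper's: you want to prove normality of $\varpi_\cs^{-1}(\xi)$ intrinsically via Serre's criterion $(R_1)+(S_2)$, whereas the paper never argues on the slice fiber directly — it writes $\varpi_\cs^{-1}(\xi)=\cs\times_\ggg\varpi^{-1}(\xi)$, base-changes the smooth morphism $\mu\colon G\times\cs\to\ggg$ of Lemma \ref{Jan sli} to get a smooth map $\mu_\xi\colon G\times\varpi_\cs^{-1}(\xi)\to\varpi^{-1}(\xi)$, and then descends both normality (from Veldkamp's normality of the adjoint-quotient fibers $\varpi^{-1}(\xi)$, \cite[Proposition 6.9]{Ve}) and the identification ``smooth point $\Leftrightarrow$ regular point'' (from \cite[\S7.13]{Jan3}) along $\mu_\xi$ using \cite[IV 17.5.8]{EGA}. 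Your $(S_2)$ step (the fiber is a complete intersection in the smooth affine $\cs$, hence Cohen--Macaulay) and your Jacobian-criterion step are fine in outline, modulo citing the characteristic-$p$ Kostant differential criterion; but there is a genuine gap at the codimension step, exactly where you place the ``main obstacle.''

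Concretely: from the codimension-$3$ bound for $\ggg\setminus\ggg_\reg$ in $\ggg$ you correctly get, by pulling back along the smooth (hence flat) map $\mu$, that $\cs\setminus\cs_\reg$ has codimension at least $3$ in $\cs$; but the inference ``hence codimension at least $2$ in each fiber $\varpi_\cs^{-1}(\xi)$'' does not follow. A closed subset of $\cs$ of dimension at most $\sfd-3$ may meet a single fiber of dimension $\sfd-r$ in a subset of dimension larger than $(\sfd-r)-2$ as soon as $r\geqslant 2$, and nothing in your argument prevents the non-regular locus from concentrating in few fibers (non-regular points of $\cs$ certainly do pile up in particular fibers, e.g.\ $e$ itself lies in the fiber through the nilpotent elements). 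What your argument actually needs is the \emph{fiberwise} statement that the non-regular locus of $\varpi^{-1}(\xi)$ (equivalently of $\varpi_\cs^{-1}(\xi)$, after transport along the smooth $\mu_\xi$) has codimension at least $2$ in that fiber; note that in the paper the logical order is the reverse of yours — the global codimension bound of Proposition \ref{Regular pts} is \emph{deduced from} Theorem \ref{premet}, so you cannot quote it here. To close the gap you would have to supply an independent source for the fiberwise bound (for instance, that each fiber $\varpi^{-1}(\xi)$ is a finite union of $G$-orbits, the non-regular ones having codimension at least $2$, or Veldkamp's/Jantzen's analysis of $\varpi^{-1}(\xi)$); but once you invoke those facts together with smoothness of $\mu_\xi$, you are essentially running the paper's descent argument and might as well import normality of $\varpi^{-1}(\xi)$ directly. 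For part (1) your contraction/semicontinuity sketch is plausible but vague; the paper simply runs Premet's arguments (3)--(4) of \cite[Theorem 5.4]{Pre1} verbatim.
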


\begin{proof} (1) In \S\ref{Slo}, we have known that $\varpi_\cs$ is surjective and all the fibers have the same dimension $\mathsf{d}-r$. The irreducibility of $\varpi_\cs^{-1}(\xi)$ can be shown by the same arguments as in (3) and (4) of  the proof of \cite[Theorem 5.4]{Pre1}.

 (2) For the proof of the second statement, it can be accomplished by following Slodowy's arguments in \cite[\S5.2, Lemma and Remark 1]{Slo}. 
 \end{proof}

\begin{remark}\label{remnormal}

Set $\textsf{t}$ to be the transformation
\begin{equation*}
\begin{array}{llll}
\textsf{t}:&\kkk&\cong&\cs\\ &X&\mapsto&e+X,
\end{array}
\end{equation*}
which is an isomorphism of affine varieties. Then one obtains a morphism
\begin{align}\label{slicesmap}
\psi:=\varpi_\cs\circ \textsf{t}:\kkk\rightarrow\bba^r,\qquad X\mapsto(\psi_1(X),\cdots,\psi_r(X)).
\end{align}
The morphism $\psi$ is still faithfully flat with $\psi^{-1}(\xi)\cong\varpi_\cs^{-1}(\xi)$ for any $\xi\in\bba^r$, and the fiber $\psi^{-1}(\xi)$ is normal. One can describe $\cs$ via $\kkk$ (cf. \cite[\S5]{Pre1}).
\end{remark}

  Set $\cs_\reg:=\cs\cap \ggg_\reg$, which is clearly an open subset of $\cs$.

   \begin{prop} \label{Regular pts}  The complement of $\cs_\reg$ in $\cs$ has codimension at least $2$.
   \end{prop}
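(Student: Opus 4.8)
The plan is to reduce the statement to the normality of the fibres of $\varpi_\cs$, which is already available from Theorem \ref{premet}, together with a standard dimension count along the morphism $\varpi_\cs$. First I would record the elementary observations: since $\ggg_\reg$ is open in $\ggg$, the set $\cs\setminus\cs_\reg=\cs\cap(\ggg\setminus\ggg_\reg)$ is a closed subvariety of $\cs$; moreover $\dim\cs=\sfd$ (recall $\sfd=\dim\ggg_e$), and by Lemma \ref{Tran sli} (resp.\ Lemma \ref{Tran gen sli}, or the morphism $\psi$ of Remark \ref{remnormal} in the good transverse slice setting) the map $\varpi_\cs\colon\cs\to\bba^r$ is surjective with every fibre of pure dimension $\sfd-r$.

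Next I would fix $\xi\in\bba^r$ and analyse the fibre $\varpi_\cs^{-1}(\xi)$ by means of Theorem \ref{premet}. By part (1)--(2) of that theorem, $\varpi_\cs^{-1}(\xi)$ is an irreducible normal variety of dimension $\sfd-r$, and a point $Y\in\varpi_\cs^{-1}(\xi)$ lies in $\ggg_\reg$ if and only if it is a smooth point of $\varpi_\cs^{-1}(\xi)$. Hence $(\cs\setminus\cs_\reg)\cap\varpi_\cs^{-1}(\xi)$ is exactly the singular locus of $\varpi_\cs^{-1}(\xi)$. Since a normal variety is regular in codimension one (Serre's criterion), this singular locus has codimension at least $2$ in the fibre, so $\dim\big((\cs\setminus\cs_\reg)\cap\varpi_\cs^{-1}(\xi)\big)\leqslant\sfd-r-2$ for every $\xi\in\bba^r$.

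Finally I would bound $\dim(\cs\setminus\cs_\reg)$ by restricting $\varpi_\cs$ to it. Let $C$ be an irreducible component of $\cs\setminus\cs_\reg$. Applying the fibre-dimension inequality to $\varpi_\cs|_C\colon C\to\bba^r$ gives $\dim C\leqslant\dim\overline{\varpi_\cs(C)}+\max_{\xi}\dim\big(C\cap\varpi_\cs^{-1}(\xi)\big)\leqslant r+(\sfd-r-2)=\sfd-2$, using $\dim\overline{\varpi_\cs(C)}\leqslant r$ and the fibrewise bound above. Therefore $\dim(\cs\setminus\cs_\reg)\leqslant\sfd-2=\dim\cs-2$, which is the claim (and in particular $\cs_\reg$ is a dense open subset of $\cs$). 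The only point needing care is the uniformity in $\xi$ of the codimension-two estimate for the singular locus of the fibres and the fact that $\cs\setminus\cs_\reg$ need not be irreducible; the former is harmless because, by faithful flatness of $\varpi_\cs$, all fibres share the dimension $\sfd-r$ and are normal by Theorem \ref{premet}, while the latter is dealt with by running the dimension estimate on each irreducible component separately. I expect this to be the whole difficulty; the substantive input, namely the normality of the fibres and the coincidence of their smooth loci with the regular locus, has already been isolated in Theorem \ref{premet}, and the same argument applies verbatim to the good transverse slice of Remark \ref{remnormal} under (H1)--(H3).
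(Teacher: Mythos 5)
Your proposal is correct and follows essentially the same route as the paper: both rest on Theorem \ref{premet} (normality of the fibres $\varpi_\cs^{-1}(\xi)$ and the identification of their smooth points with the regular points), deduce that the singular locus of each fibre has codimension at least $2$ since a normal variety is regular in codimension one, and then conclude by the fibre-dimension theorem applied to the restriction of $\varpi_\cs$ to (a maximal-dimensional, resp.\ each) irreducible component of $\cs\setminus\cs_\reg$, giving $\dim(\cs\setminus\cs_\reg)\leqslant\sfd-2$. The only cosmetic difference is that the paper works with one component of maximal dimension while you run the estimate over all components, which amounts to the same inequality.
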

\begin{proof}
By Theorem \ref{premet} we know that $\varpi_\cs^{-1}(\xi)$ for any $\xi\in\bba^r$ is normal, and the smooth points of the fiber  coincide with the regular points of $\ggg$ falling into this fiber.
So the non-regular points in the fiber constitute a closed subset of codimension no less than $2$ (see \cite[Theorem II.5.1.3]{Sha}). Denote by $\cs_\sing$ the complement of $\cs_\reg$ in $\cs$. Then $\cs_\sing$ is a closed subset of  $\cs$. Set $l$ to be the dimension of $\cs_\sing$. We have a morphism $\varpi_{\cs_\sing}: \cs_\sing\rightarrow \bba^r$. Next, we take a maximal irreducible closed set $\cs^{\max}_\sing$ in $\cs_\sing$ with  $\dim\cs^{\max}_\sing=l$. Considering the restriction of $\varpi_{\cs_\sing}$ to $\cs^{\max}_\sing$, we have the morphism $\varpi_{\cs_\sing}^{\max}: \cs_\sing^{\max}\rightarrow \bba^r$. Set $\mathbf{A}$ to be the image of $\cs_\sing^{\max}$ under $\varpi_{\cs_\sing}^{\max}$. Then $\mathbf{A}$ is an irreducible variety of $\bba^r$, and let $a:=\dim \mathbf{A}\leqslant r$.

  According to \cite[Theorem I.6.3.7]{Sha}, there exists a non-empty open subset $U$ in $\mathbf{A}$ such that $\dim (\varpi_{\cs_\sing}^{\max})^{-1}(\xi)=l-a$ for any $\xi\in U\subseteq \mathbf{A} \subseteq\bba^r$. From the arguments above, we know that $(\varpi_{\cs_\sing}^{\max})^{-1}(\xi)$ ($\subseteq \varpi_\cs^{-1}(\xi)\cap \cs_\sing$)  has codimension $\geqslant 2$ in $\varpi_\cs^{-1}(\xi)$. By Theorem \ref{premet} again, we have $\dim\varpi_\cs^{-1}(\xi)=\sfd-r$. Hence $(\sfd-r)-(l-a)\geqslant 2$, i.e., $\sfd-l\geqslant 2+r-a\geqslant 2$. Therefore, $\cs_\sing$ has codimension not less than $2$ in $\cs$. The proof is completed.
\end{proof}

\subsection{Revisit of $Z_0(\ct)$}
Let us return to the $\mathds{k}$-algebra $Z_0(\ct)=Z_0(\tilde{\mathfrak{p}})^{\mathcal{M}}$, which is the $p$-center of the finite $W$-algebra $\ct(\ggg,e)$
. Recall that $Z_0(\ct)=Z(\ct)\cap \varphi(Z_0(\ggg))$ by \eqref{twoequation}.

As discussed in \S\ref{312} (also see \cite[\S8.2]{GT}),
 $Z_0(\ct)$ is defined as the algebra of regular functions on the Frobenius twist  $\bbk[(\chi+ \kappa(\kkk))^{(1)}]$.  Thus we can identify $\Specm(Z_0(\ct))$ with the spectrum of maximal ideals of the Frobenius twist $\bbk[(\kappa(\cs))^{(1)}]$.

Now we are finally in a position to prove Theorem \ref{Azumaya Thm}.

\subsection{The Proof of Theorem \ref{Azumaya Thm}}\label{azumaya proof}
\begin{proof}
 Recall that $\ct(\ggg,e)$ is a prime Noetherian ring by Lemma \ref{Noe Pri}(2,3), and module-finite over its center $Z(\ct)$ by Theorem \ref{prem}(4).
 According to  Theorem \ref{BGl 3.8} and Proposition \ref{Auslander Cond}, it suffices to verify that   $\mbox{codim}(\Specm ((Z(\ct))\backslash \mathcal{A}(\ct(\ggg,e))))\geqslant 2$.
We proceed to check this by several steps.

(a) The embedding homomorphism between $\bbk$-algebras: $Z_0(\ct)\hookrightarrow Z(\ct)$ gives rise to the finite dominant morphism
$$\Xi:\Specm(Z(\ct))\rightarrow \Specm(Z_0(\ct))\cong \Specm(\bbk[(\kappa(\cs))^{(1)}]).$$
We will identify $\Specm(Z_0(\ct))$ with $\Specm(\bbk[(\kappa(\cs))^{(1)}])$ in the following arguments.

(b) From the discussion in the proof of Proposition \ref{lessthan}, we know that all irreducible modules of maximal dimension for $\ct(\ggg,e)$ correspond to the ones of maximal dimension for $U_\eta(\ggg)$ with $\eta$ running through the set $\chi+\mmm^\perp$.

On the other hand, the coadjoint action map
$$\mathcal{M}\times\kappa(\cs)\rightarrow {\chi+\mmm^\perp}$$
gives rise to an isomorphism of affine varieties (cf. \cite[Lemma 3.2]{Pre3}, \cite[Lemma 2.1]{GG}, and \cite[Lemma 5.1]{GT}), where $\mathcal{M}$ is a unipotent subgroup of $G$ such that $\Lie(\mathcal{M})=\mmm$ as defined earlier. It follows that 
all regular elements of $\ggg^*$ in $\chi+\mmm^\perp$ come from the  regular ones of $\ggg^*$ in the saturation of $\kappa(\cs_\reg)$ under the $\mathcal{M}$-action.

Thanks to \cite[Proposition 3.15]{BGo}, any irreducible modules of $U_\eta(\ggg)$ associated with a regular $\eta\in \ggg^*$ are of the maximal dimension of irreducible $U(\ggg)$-modules.
Now we already identify $\Specm(Z_0(\ct))$ with $\Specm(\bbk[(\kappa(\cs))^{(1)}])$. Note that $(\kappa(\cs_\reg))^{(1)}=\kappa(\cs_\reg)$ as topological spaces.
Set $X=\{\mathcal{I}_s\in\Specm(\bbk[(\kappa(\cs))^{(1)}])\mid s\in\kappa(\cs_\reg)\}$, where $\mathcal{I}_s$ means the kernel of the $p$-central character $\lambda_s^0:Z_0(\ct)\rightarrow \bbk$ via $s\in\kappa(\cs_\reg)$ in the same sense as in Remark \ref{Central Character}
. Then $\Xi^{-1}(X)$ is contained in the Azumaya locus of $\ct(\ggg,e)$ by the very definition.

(c) By Proposition \ref{Regular pts}, the open subset $\kappa(\cs_\reg)$ of $\kappa(\cs)$ has complement of codimension at least $2$. Recall that $\Xi$ is a finite dominant morphism.  Hence
 $\Xi^{-1}(X)$ is an open subset of $\Specm(Z(\ct))$ with complement of codimension at least $2$. Therefore, we have the Azumaya locus of $\ct(\ggg,e)$ which has been aware of containing  $\Xi^{-1}(X)$,  consequently satisfies the condition that the codimension of its complement in $\Specm(Z(\ct))$ is at least $2$.
The proof is completed.
\end{proof}

\section{Centers of finite $W$-algebras revisited}\label{pf for thm 1.1}
Keep the notations as in the previous sections. In particular, let $Z(\ct)$ denote the center of $\ct(\ggg,e)$, and $\tilde\cz$  the subalgebra of $Z(\ct)$ generated by $Z_0(\ct)$ and $Z_1(\ct)$ as in \S\ref{On the centers of}.

In order to prove Theorem \ref{central thm}, we will take an analogue of  Veldkamp's strategy  in the reductive Lie algebra case (cf. \cite{Ve}).
We first show that both $Z(\ct)$ and $\tilde\cz$ have the same fraction field and  $\text{gr}(Z(\ct))$ is integral over $\text{gr}(\tilde{\cz})$, while $\text{gr}(\tilde{\cz})$ is integrally closed. Then we have $\text{gr}(\tilde{\cz})=\text{gr}(Z(\ct))$ and finally obtain that $Z(\ct)=\tilde\cz$.

\subsection{A key lemma}
Recall that in \S\ref{212} we have introduced the Kazhdan filtration on $U(\ggg)$ and its sub-quotients, and also their corresponding graded algebras.  
Write $\text{gr}(\tilde{\cz})$ and $\text{gr}(Z(\ct))$ for the graded algebras of $\tilde{\cz}$ and $Z(\ct)$ respectively.

Recall that $\{g_1,\cdots,g_r\}$ is a set of algebraically independent generators in $U(\ggg)^G$ with degree $m_1+1,\cdots,m_r+1$ associated to the standard grading of $U(\ggg)$, and $f_i=\varphi(g_i)$ with $1\leqslant i\leqslant r$ are elements in $Z_1(\ct)$ as in \S\ref{3.2}.
Then we have  the isomorphism $\bar\psi:\text{gr}(\ct(\ggg,e))\xrightarrow\sim S(\ggg_e)$ under the Kazhdan grading (see \eqref{grse}).

\begin{lemma}\label{complete int}
$\Specm(\text{gr}(\tilde\cz))$ is a (strict) complete intersection.
\end{lemma}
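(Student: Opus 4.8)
The plan is to give $\tilde\cz$ an explicit presentation by generators and relations, and then to transport it to the Kazhdan-associated graded algebra. By Lemmas \ref{Veldkamp mod} and \ref{mod equal alg}, $\tilde\cz=\cz$ is a free $Z_0(\ct)$-module of rank $p^r$ with basis $\{\df^\bt=f_1^{t_1}\cdots f_r^{t_r}\mid\bt\in\Lambda_r\}$. Moreover $Z_0(\ct)=Z_p(\tilde{\mathfrak{p}})^{\mathcal{M}}\cong Z_p(\ggg_e)$ is a polynomial algebra in $\sfd=\dim\ggg_e$ variables; in view of \eqref{grse} and the computation $\text{gr}(x^p-x^{[p]})=(\text{gr}\,x)^p$ for a Kazhdan-homogeneous $x$, its generators may be chosen Kazhdan-homogeneous, and $\text{gr}(Z_0(\ct))$ is the polynomial subalgebra of $S(\ggg_e)$ on the $p$-th powers of their symbols, over which $S(\ggg_e)$ is free of rank $p^{\sfd}$. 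Applying Veldkamp's theorem \cite[Theorem 3.1]{Ve} inside $Z(\ggg)$ and transporting through $\varphi$, each $f_i^p$ lies in $\tilde\cz$, so $f_i^p=\sum_{\bt\in\Lambda_r}c_{i,\bt}\df^\bt$ with $c_{i,\bt}\in Z_0(\ct)$ (the coefficients are $\mathcal{M}$-invariant because $f_i^p$ is, and $\{\df^\bt\}$ is a $Z_0(\ct)$-basis). Letting $\Xi_1,\dots,\Xi_{\sfd}$ and $Y_1,\dots,Y_r$ be indeterminates mapping onto a set of Kazhdan-homogeneous generators of $Z_0(\ct)$ and onto $f_1,\dots,f_r$, a reduction modulo the leading terms $Y_i^p$ together with a comparison of free $Z_0(\ct)$-bases yields
$$\tilde\cz\;\cong\;\bbk[\Xi_1,\dots,\Xi_{\sfd},Y_1,\dots,Y_r]\big/(R_1,\dots,R_r),\qquad R_i:=Y_i^p-\sum_{\bt\in\Lambda_r}c_{i,\bt}(\Xi)\,Y^\bt ,$$
so that $\Spec(\tilde\cz)$ itself is a complete intersection of $r$ hypersurfaces in $\bba^{\sfd+r}$.

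\emph{Passing to the graded algebra.} Grade $\bbk[\Xi,Y]$ by assigning $\Xi_j$ and $Y_i$ the Kazhdan degrees of the corresponding generators of $Z_0(\ct)$ and of $f_i$. Each $R_i$ lies in the kernel of $\bbk[\Xi,Y]\to\tilde\cz$, hence its Kazhdan symbol $\text{gr}(R_i)$ lies in the defining ideal $J$ of $\text{gr}(\tilde\cz)$; since $f_i^p$ and $\sum_\bt c_{i,\bt}\df^\bt$ are equal, hence of equal Kazhdan degree, and the monomials $c_{i,\bt}\df^\bt$ for distinct $\bt$ cannot cancel against one another, $\text{gr}(R_i)$ is again monic in $Y_i$ of degree $p$. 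Now both $B:=\bbk[\Xi,Y]/(\text{gr}(R_1),\dots,\text{gr}(R_r))$ and $\text{gr}(\tilde\cz)$ are finite free modules of rank $p^r$ over $\text{gr}(Z_0(\ct))\cong\bbk[\Xi_1,\dots,\Xi_{\sfd}]$: for $B$ because each $\text{gr}(R_i)$ is monic of degree $p$ in $Y_i$, and for $\text{gr}(\tilde\cz)$ because the $Z_0(\ct)$-module filtration of $\tilde\cz$ relative to $\{\df^\bt\}$ is strict, so that the symbols $\{\overline{\df^\bt}\}$ form a $\text{gr}(Z_0(\ct))$-basis of $\text{gr}(\tilde\cz)$. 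The canonical surjection $B\twoheadrightarrow\text{gr}(\tilde\cz)$ is then a surjection of free $\text{gr}(Z_0(\ct))$-modules of equal finite rank, hence an isomorphism; thus $J=(\text{gr}(R_1),\dots,\text{gr}(R_r))$, and $\Spec(\text{gr}(\tilde\cz))$ is a complete intersection of $r$ hypersurfaces in $\bba^{\sfd+r}$.

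\emph{The main obstacle.} The delicate point is the strictness assertion above: that writing $a\in\tilde\cz$ as $\sum_\bt b_\bt\df^\bt$ with $b_\bt\in Z_0(\ct)$ computes the Kazhdan degree of $a$ with no cancellation among the top terms, equivalently that $\{\overline{\df^\bt}\}$ remains $\text{gr}(Z_0(\ct))$-linearly independent in $S(\ggg_e)$. I would derive this from the description of $\text{gr}(Z_0(\ct))\subset S(\ggg_e)$ as the subalgebra of $p$-th powers together with the algebraic independence of the Kazhdan leading terms $\text{gr}(f_1),\dots,\text{gr}(f_r)$ modulo $\text{gr}(Z_0(\ct))$ — which over $\bbc$ reflects Premet's isomorphism $Z(U(\ggg_\bbc,\hat e))\cong S(\ggg_\bbc)^{G_\bbc}$ and its graded analogue, and which persists under reduction modulo $p$ for $p$ large (resp.\ under the hypotheses (H1)+(H4) or (H2)+(H5)). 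Alternatively, one may bypass strictness by checking directly that $\text{gr}(R_1),\dots,\text{gr}(R_r)$ is a regular sequence and comparing the Kazhdan Hilbert series of $\tilde\cz$ and of $B$.
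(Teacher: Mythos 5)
Your overall architecture is close to the paper's: there too the lemma is proved by exhibiting $\gr(\tilde\cz)$ as a quotient $\bbk[Y_1,\cdots,Y_\sfd,Z_1,\cdots,Z_r]/(Z_1^p-F_1,\cdots,Z_r^p-F_r)$, and the entire content is the injectivity of that presentation. But your proposal has a genuine gap precisely at the point you yourself flag as ``the main obstacle'': the $\gr(Z_0(\ct))$-linear independence of the symbols $\overline{\df^{\bt}}$, $\bt\in\Lambda_r$, inside $S(\ggg_e)$ (equivalently, strictness of the free $Z_0(\ct)$-basis with respect to the Kazhdan filtration). You do not prove it. The route you sketch --- Premet's description of $Z(U(\ggg_\bbc,\hat e))$ over $\bbc$, ``its graded analogue'', and persistence under reduction modulo $p$ --- is not carried out, is not available as stated under the hypotheses (H2)+(H5) (where no reduction from $\bbc$ is performed), and is not a result established elsewhere in the paper. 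Your proposed bypass (show $\gr(R_1),\cdots,\gr(R_r)$ is a regular sequence and compare Kazhdan Hilbert series) does not avoid the issue: the Hilbert series of $\gr(\tilde\cz)$ is exactly what strictness computes, so you would be assuming what you need. Moreover the same unproven statement is already used silently earlier in your argument, when you assert that $\gr(R_i)$ is monic in $Y_i$ of degree $p$ because ``the monomials $c_{i,\bt}\df^\bt$ for distinct $\bt$ cannot cancel against one another'' --- that is the very no-cancellation claim in question.

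The paper closes this gap with no appeal to the complex case. Since $x^{[p]}\in\ggg(pi)$ for $x\in\ggg(i)$, the symbol of $x_i^p-x_i^{[p]}$ is $\bar x_i^p$, so $\gr(Z_0(\ct))=\bbk[\bar x_1^p,\cdots,\bar x_\sfd^p]$; hence a relation $\sum_{\bt\in\Lambda_r}a_\bt(\bar x_1^p,\cdots,\bar x_\sfd^p)\,\bar f_1^{t_1}\cdots\bar f_r^{t_r}=0$ in $S(\ggg_e)$ is rewritten with $\bar x_i^p$ replaced by $\bar x_i^p-\bar x_i^{[p]}$ and lifted to $\sum_{\bt}a_\bt(x_1^p-x_1^{[p]},\cdots,x_\sfd^p-x_\sfd^{[p]})f_1^{t_1}\cdots f_r^{t_r}=0$ in $\tilde\cz$, to which Lemma \ref{Veldkamp mod} applies and forces all coefficients to vanish; this is exactly the injectivity of the presentation. (The paper also obtains the relations $Z_i^p-F_i$ more cheaply: since $\gr(\ct(\ggg,e))\cong S(\ggg_e)$, each $\bar f_i$ is literally a polynomial $F_i(\bar x_1,\cdots,\bar x_\sfd)$, so $\bar f_i^p$ is automatically a polynomial in the $\bar x_j^p$; no expansion of the filtered element $f_i^p$ in the basis $\{\df^\bt\}$ is needed.) If you wish to keep your presentation-first route, you must supply the strictness statement by an argument of this kind --- deducing the graded independence from the filtered independence of Lemma \ref{Veldkamp mod} --- rather than deferring it.
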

\begin{proof}  The arguments are almost the same as Veldkamp's strategy in \cite{Ve}. For the readers' convenience, we present the details and accomplish the arguments by steps.

(a) Let $x_1,\cdots,x_\sfd$ be a set of homogeneous basis of $\ggg_e$ as defined in \S\ref{211}.
Write $\bar f_1,\cdots,\bar f_r$ for the images of $f_1,\cdots,f_r$ in $\bar\psi(\text{gr}(\ct(\ggg,e)))$, then $\bar f_i$ is a polynomial in $x_1,\cdots,x_\sfd$ for $1\leqslant i\leqslant r$.
Due to \eqref{grp}, we have $\bar\psi:\text{gr}(Z_0(\ct))\xrightarrow\sim\bbk[x_1^p,\cdots,x_\sfd^p]$.   The morphism $\bar\psi$ finally gives rise to the following equations
\begin{equation}\label{grisom}
\bar\psi(\text{gr}(\tilde\cz))=\bar\psi(\text{gr}(Z_0(\ct)\cdot Z_1(\ct)))=\bbk[x_i^p,\bar{f}_j\mid1\leqslant i\leqslant\sfd,1\leqslant j\leqslant r].
\end{equation}
So there exists a surjective homomorphism
\begin{equation}\label{psi}
\omega: \bbk[Y_1,\cdots,Y_\sfd,Z_1,\cdots,Z_r]/(Z_1^p-F_1,\cdots,Z_r^p-F_r)\rightarrow\bar\psi(\text{gr}(\tilde\cz))
\end{equation}
with
\begin{equation}\label{psipro}
\begin{array}{ll}
\omega(Y_i)=x_i^p&(1\leqslant i\leqslant\sfd),\\
\omega(Z_i)=\bar{f}_i&(1\leqslant i\leqslant r).
\end{array}
\end{equation}
Here $Y_1,\cdots,Y_\sfd,Z_1,\cdots,Z_r$ denote algebraically independent variables over $\bbk$,
and $F_i\in\bbk[Y_1,\cdots,Y_\sfd]$ satisfies
 \begin{equation}\label{F_i}
F_i(x_1,\cdots,x_\sfd)=\bar f_i.
\end{equation}

(b) We will further show that the map $\omega$ defined in \eqref{psi} is  injective.

(Step 1)
For $\mathbf{t}:=(t_1,t_2,\cdots,t_r)\in\Lambda_r$,
let $b_{\mathbf{t}}(\Phi(x_1),\cdots,\Phi(x_\sfd))$ be a monomial in $\bbk[\Phi(x_1),\cdots,\Phi(x_\sfd)]$ (where $\Phi(x_i)$ is defined as in \eqref{xkzp}), and set $$a_\mathbf{t}(x_1^p,\cdots,x_\sfd^p):=\bar\psi(\text{gr}(b_{\mathbf{t}}(\Phi(x_1),\cdots,\Phi(x_\sfd)))),$$
where $\text{gr}(b_{\mathbf{t}}(\Phi(x_1),\cdots,\Phi(x_\sfd)))$ means the gradation of $b_{\mathbf{t}}(\Phi(x_1),\cdots,\Phi(x_\sfd))$ under the Kazhdan grading.
Suppose that we are given an equation
\begin{equation}\label{ateq}
\sum_{\mathbf{t}\in\Lambda_r}a_\mathbf{t}(x_1^p,\cdots,x_\sfd^p)\bar f_1^{t_1}\cdots\bar f_r^{t_r}=0
\end{equation}
in $\bar\psi(\text{gr}(\tilde\cz))$. We  show below that $a_\mathbf{t}(x_1^p,\cdots,x_\sfd^p)=0$ for all $\mathbf{t}\in\Lambda_r$.

(Step 2)
Since 
$x_1,\cdots,x_\sfd$ is a basis of $\ggg_e$, 
let $x^*_1,\cdots,x^*_\sfd$ denote the linear functions on $\kkk$  defined via $x^*_i(y)=(x_i,y)$ for $y\in\kkk$. 
 Recall that in (\ref{slicesmap}) we have
\begin{align*}
\psi:=\varpi_\cs\circ \textsf{t}:\kkk\rightarrow\bba^r,\qquad X\mapsto(\psi_1(X),\cdots,\psi_r(X)).
\end{align*}
Those $\psi_i$ naturally become functions in variables $x_i^*$, $i=1,\ldots,\sfd$.
By the same arguments as in \cite[Propositon 5.2]{Pre1},  we know that $(\mathsf{d}\psi)_X$ is surjective  for $X\in \kkk$ with $e+X\in \ggg_{\text{reg}}$ (In \cite{Pre1} the case is considered over $\bbc$. By careful inspection, one can easily conclude that it also goes through over the field in characteristic $p\gg0$).
Hence the Jacobian  matrix
\begin{equation*}\left(\partial \psi_i\over
\partial x^*_j\right)_{\tiny\begin{array}{l}
1\leqslant i\leqslant r\\1\leqslant j\leqslant \sfd
\end{array}}
\end{equation*}
 has rank $r$ in $\kkk$.

 In aid of the non-degenerate bilinear form $(\cdot,\cdot)$, we identify the polynomial function ring $\bbk[\kkk]$ on $\kkk$ with the symmetric algebra  $S(\ggg_e)$ on $\ggg_e$, correspondingly identify $\psi_i$ with $\bar f_i$ (Here $\psi_i$ and $\bar f_i$ are homogeneous elements under the Kazhdan grading. Note that up to the transformation $\textsf{t}$,
$\psi_i$ is obtained by 
restriction of $\kappa(\text{gr}_S(g_i))=\kappa(\tilde g_i)$ on $\cs$ under the standard grading with homogeneous degree $m_i+1$ (see \S\ref{Slo}), and then the same $\psi_i$ is still a homogeneous element with degree $2m_i+2$ under the Kazhdan grading by \cite[\S5.1]{Pre1}
). And it is finally known that
 \begin{equation*}\left(\partial \bar f_i\over
\partial x_j\right)_{\tiny\begin{array}{l}
1\leqslant i\leqslant r\\1\leqslant j\leqslant \sfd
\end{array}}
\end{equation*}
has rank $r$. It follows that the elements $\bar f_1,\cdots,\bar f_r$ are $p$-independent in the quotient field of $\bbk[x_1,\cdots,x_\sfd]$ (see \cite[Page 180]{Jacob}). Therefore,
$a_\mathbf{t}(x_1^p,\cdots,x_\sfd^p)=0$ for
all $\mathbf{t}=(t_1,t_2,\cdots,t_r)\in\Lambda_r$ in \eqref{ateq}.

(Step 3) Let us return to the map $\omega$ in \eqref{psi}. From the definition of $\omega$ we see that the $\bbk$-algebra $\bbk[Y_1,\cdots,Y_\sfd,Z_1,\cdots,Z_r]/(Z_1^p-F_1,\cdots,Z_r^p-F_r)$ is spanned by $Z_1^{t_1}\cdots Z_r^{t_r}$ with $\mathbf{t}\in\Lambda_r$ over $\bbk[Y_1,\cdots,Y_\sfd]$.
Note that $\omega$ satisfies (\ref{psipro}) and (\ref{F_i}), and all $x_i$ for $i=1,\cdots,\sfd$ are algebraically independent.
Hence, the injectivity of $\omega$ follows from the discussion in (Step 1) and (Step 2).

(c) Taking \eqref{grisom} into consideration, we finally get
\begin{equation}\label{gre}
\text{gr}(\tilde\cz)\cong\bbk[Y_1,\cdots,Y_\sfd,Z_1,\cdots,Z_r]/(Z_1^p-F_1,\cdots,Z_r^p-F_r)
\end{equation}
as $\bbk$-algebras.
Hence $\Specm(\text{gr}(\tilde\cz))$ is a (strict) complete intersection.
\end{proof}

Now we are in a position to prove Theorem \ref{central thm}.

\subsection{The proof of Theorem \ref{central thm}(1)}\label{pro}
Still set $n=\dim\ggg$ and $r=\rank\,\ggg$.  It is sufficient to prove that $Z(\ct)$ coincides with $\tilde{\cz}$.  The arguments are divided into a couple of steps.

  (Step 1) Recall the fractional fields  $\bbf_0=\Frac(Z_0(\ct))$ and $\bbf=\Frac(Z(\ct))$. Obviously $\bbf_0$ is  a subfield of $\bbf$.
  Set $\cq$ to be  the fractional field of $\tilde\cz$. By Lemmas \ref{Har Chand Cent} and \ref{mod equal alg}, $\cq/\bbf_0$ is a finite field extension with extension degree $[\cq:\bbf_0]=p^{r}$. We  first prove that $\bbf$ coincides with its subfield $\cq$.

 Let us investigate the field extension  $\bbf\slash\bbf_0$. Consider the fractional ring of $\ct(\ggg,e)$:
 $$Q(\ct)=\ct(\ggg,e)\otimes_{Z(\ct)}\bbf.$$
 Denote by $[Q(\ct):\bbf]$ the dimension of $Q(\ct)$ over $\bbf$.  From Proposition \ref{centersimple}(4) we see that $[Q(\ct):\bbf]\geqslant p^\ell$ with $\ell=n-r-\dim  G.e$. On the other hand, by Premet's result in \cite[Remark 2.1]{Pre2} we know that $\ct(\ggg,e)$ is a free $Z_0(\ct)$-module~ of rank $p^{n-\dim  G.e}$. Hence $Q(\ct)$ over $\bbf_0$ has dimension  $[Q(\ct):\bbf_0]=p^{n-\dim  G.e}$. Consequently, we have
 \begin{align*}
 [\bbf:\bbf_0]&=[Q(\ct):\bbf_0]\slash [Q(\ct):\bbf]\cr
 &\leqslant p^{n-\dim  G.e}\slash p^\ell\cr
 &=p^{r}.
 \end{align*}
 Combining this with $[\cq:\bbf_0]=p^r$, we have  $\cq=\bbf$.

(Step 2)
As $\tilde{\cz}\subseteq Z(\ct)$ and $\bbk$ is contained in $\tilde{\cz}$, by inductive arguments on the degrees we can deduce that the claim that
$$\tilde{\cz}=Z(\ct)$$ amounts to  the claim that
\begin{equation}\label{griso}
\text{gr}(\tilde{\cz})=\text{gr}(Z(\ct))
\end{equation}
under the Kazhdan grading. 

In (Step 1) we have seen that $Z(\ct)$ and $\tilde{\cz}$ have the same fraction field. From this one easily derives that their graded algebras $\text{gr}(Z(\ct))$ and $\text{gr}(\tilde{\cz})$ also share the same fraction field. It follows from Proposition \ref{PI and Central S}(3) that $\text{gr}(Z(\ct))$ is integral over $\text{gr}(Z_0(\ct))$, so, {\it{a fortiori}}, $\text{gr}(Z(\ct))$ is integral over $\text{gr}(\tilde{\cz})$. Therefore, to prove \eqref{griso} it is sufficient to show that $\text{gr}(\tilde{\cz})$ is integrally closed.  

(Step 3)  Keep in mind  the algebra isomorphism in (\ref{gre}). Consider a finite morphism $\iota$ from $\Specm(\text{gr}(\tilde\cz))$ onto affine space $\bba^\sfd$ arising from the injection
\begin{equation}\label{inject}
\iota: \bbk[Y_1,\cdots,Y_\sfd]\rightarrow
\bbk[Y_1,\cdots,Y_\sfd,Z_1,\cdots,Z_r]/(Z_1^p-F_1,\cdots,Z_r^p-F_r).
\end{equation}
The singular points of $\Specm(\text{gr}(\tilde\cz))$ are the points where the functional matrix
$$\bigg({\partial F_i\over\partial Y_j} \bigg)_{\tiny\begin{array}{l}
1\leqslant i\leqslant r\\1\leqslant j\leqslant \sfd
\end{array}}$$
has rank less than $r$. By definition, this functional matrix is the same as
\begin{equation}\label{sec function matrix}
\bigg({\partial \kappa(\tilde g_i)|_\cs\over\partial \upsilon_j} \bigg)_{\tiny\begin{array}{l}
1\leqslant i\leqslant r\\1\leqslant j\leqslant \sfd
\end{array}}
\end{equation}
defined by $\varpi_\cs$ (see Lemma \ref{Tran sli}
), where $\kappa(\tilde g_1)|_\cs,\cdots,\kappa(\tilde g_r)|_\cs$ are the functions $\kappa(\tilde g_1),\cdots,\kappa(\tilde g_r)$ restricted to the ones on $\cs$, and    $\upsilon_1,\cdots,\upsilon_\sfd$ denote linear coordinates on $\cs$. The points with rank less than $r$ defined in (\ref{sec function matrix})  are just non-regular points in $\cs\subseteq\ggg$. By Proposition \ref{Regular pts}, the complement of $\cs_\reg$ in $\cs$ has codimension not less than $2$.
Since $\iota$ is a finite morphism, the set of singular points of $\Specm(\text{gr}(\tilde\cz))$  has codimension not less than $2$. This is to say, $\Specm(\text{gr}(\tilde\cz))$ is smooth outside of a subvariety of codimension $2$.

(Step 4) By Lemma \ref{complete int}, $\Specm(\text{gr}(\tilde\cz))$ is a (strict) complete intersection. According to Serre's theorem \cite[Chapter 4]{Se}, a complete intersection variety is normal if it is smooth outside of a subvariety of codimension 2.
By (Step 3), $\Specm(\text{gr}(\tilde\cz))$ is normal, i.e., $\text{gr}(\tilde{\cz})$ is integrally closed, which implies  (\ref{griso}).
According to the analysis in  (Step 2),  we have known that $\tilde\cz=Z(\ct)$.

\subsection{The proof of Theorem \ref{central thm}(2)}\label{pro2}
 To prove Statement (2), we mainly take use of the results in \cite{BGo}, and also the ones we obtained in the previous sections.

Keep the notations as before. We claim that $Z_0(\ct)\cap Z_1(\ct)$ is a polynomial
algebra of rank $r$, and $Z_1(\ct)$ is a free $Z_0(\ct)\cap Z_1(\ct)$-module with basis $\{f_1^{t_1}\cdots f_r^{t_r}\mid0\leqslant t_i\leqslant p-1, 1\leqslant i\leqslant r\}$.

By \cite[Theorem 3.5(2)]{BGo},
 $Z_0(\ggg)\cap Z_1(\ggg)$ is a polynomial algebra of rank $r$, and $Z_1(\ggg)$ is a free $Z_0(\ggg)\cap Z_1(\ggg)$-module of rank $p^r$ with a basis consisting of all $g^{\mathbf{t}}=g_1^{t_1}\cdots g_r^{t_r}$ with $0\leqslant t_i\leqslant p-1$ for all $i$ (the condition on $p$ is automatically satisfied in the present case).
In virtue of Lemma \ref{Har Chand Cent}, the restriction of map $\varphi$  to $Z_1(\ggg)$ is an isomorphism. As $Z_0(\ggg)\cap Z_1(\ggg)$ is a subalgebra of $Z_1(\ggg)$, it is  an immediate consequence that $\varphi(Z_0(\ggg)\cap Z_1(\ggg))\cong Z_0(\ggg)\cap Z_1(\ggg)$ as $\bbk$-algebras. Therefore, $\varphi(Z_0(\ggg)\cap Z_1(\ggg))$ is also a polynomial algebra of rank $r$, and  $\varphi(Z_1(\ggg))=Z_1(\ct)$ is a free $\varphi(Z_0(\ggg)\cap Z_1(\ggg))$-module of rank $p^r$ with a basis consisting of all $f^{\mathbf{t}}=\varphi(g^{\mathbf{t}})=f_1^{t_1}\cdots f_r^{t_r}$ for $\mathbf{t}\in\Lambda_r$.

Now we will show that \begin{equation}\label{varz01}
\varphi(Z_0(\ggg)\cap Z_1(\ggg))=Z_0(\ct)\cap Z_1(\ct).
\end{equation}
First we have $\varphi(Z_0(\ggg)\cap Z_1(\ggg))\subseteq\varphi(Z_0(\ggg))\cap\varphi(Z_1(\ggg))$ by definition.  On the other hand, for any $x\in \varphi(Z_0(\ggg))\cap\varphi(Z_1(\ggg))$, we can write
$x=\sum_{\mathbf{t}\in\Lambda_r}a_{\mathbf{t}}f_1^{t_1}\cdots f_r^{t_r}$ for $a_{\mathbf{t}}\in\varphi(Z_0(\ggg)\cap Z_1(\ggg))$ because $\varphi(Z_1(\ggg))$ is already proved to be a free $\varphi(Z_0(\ggg)\cap Z_1(\ggg))$-module of rank $p^r$ with a basis consisting of all $f^{\mathbf{t}}=f_1^{t_1}\cdots f_r^{t_r}$ for $\mathbf{t}\in\Lambda_r$. So one can obtain that \begin{equation}\label{label}
(a_{(0,0,\cdots,0)}-x)+\sum_{\mathbf{t}\in\Lambda_r,\mathbf{t}\neq(0,0,\cdots,0)}a_{\mathbf{t}}f_1^{t_1}\cdots f_r^{t_r}=0.
\end{equation}Note that $\varphi(Z_0(\ggg)\cap Z_1(\ggg))\subseteq \varphi(Z_0(\ggg))\cap\varphi(Z_1(\ggg))\subseteq\varphi(Z_0(\ggg))\cap\ct(\mathfrak{g},e)=Z_0(\ct)$ by \eqref{twoequation} and Lemma \ref{Har Chand Cent}, and  $x\in Z_0(\ct)$ by definition. Then
it follows from Statement (1) that all the coefficients in \eqref{label} must be zero. In particular, $x=a_{(0,0,\cdots,0)}\in\varphi(Z_0(\ggg)\cap Z_1(\ggg))$, thus we have $\varphi(Z_0(\ggg))\cap\varphi(Z_1(\ggg))\subseteq\varphi(Z_0(\ggg)\cap Z_1(\ggg))$. Therefore, we obtain that $\varphi(Z_0(\ggg)\cap Z_1(\ggg))=\varphi(Z_0(\ggg))\cap\varphi(Z_1(\ggg))$. Moreover, taking \eqref{twoequation}, Lemma \ref{Har Chand Cent} and the discussion in \S\ref{pcen} into consideration, we further have
that \begin{equation}\label{red}
\varphi(Z_0(\ggg))\cap\varphi(Z_1(\ggg))=Z_0(\tilde{\mathfrak{p}})\cap\ct(\mathfrak{g},e)\cap Z_1(\ct)=Z_0(\ct)\cap Z_1(\ct),
\end{equation}then \eqref{varz01} follows.

Actually we can further obtain that $Z_1(\ct)$ is a complete intersection over $Z_0(\ct)\cap Z_1(\ct)$.
This is because $Z_1(\ggg)$ is a complete intersection over $Z_0(\ggg)\cap Z_1(\ggg)$ by \cite[Theorem 3.5(3)]{BGo}, and
Lemma \ref{Har Chand Cent} entails that the map $\varphi$ restricted to $Z_1(\ggg)$ is an isomorphism, then $Z_1(\ggg)\cong Z_1(\ct)$ and $Z_0(\ggg)\cap Z_1(\ggg)\cong Z_0(\ct)\cap Z_1(\ct)$ as $\bbk$-algebras.

By summing up the above along with the first statement, the proof of the second statement is completed.

\begin{rem}
As an immediate consequence of the proof of Theorem \ref{central thm}(1), we have  $2m=\ell$ in Proposition \ref{centersimple}(4).
\end{rem}
\begin{rem}\label{similarresu}
All the discussion in this section goes through for the extended finite $W$-algebra $U(\ggg,e)$, i.e.,
let $Z(U(\ggg,e))$ be the center of the $\mathds{k}$-algebra $U(\ggg,e)$. Then we have the following results.
\begin{itemize}
\item[(1)] The $\mathds{k}$-algebra $Z(U(\ggg,e))$ is generated by  $Z_0(\tilde{\mathfrak{p}})$ and $Z_1(\ct)$. More precisely, $Z(U(\ggg,e))$ is a free $Z_0(\tilde{\mathfrak{p}})$-module of rank $p^r$ with a basis $f_1^{t_1}\cdots f_r^{t_r}$, where $(t_1,\cdots,t_r)$ runs through $\Lambda_r$.
\item[(2)] The multiplication map $\nu:~Z_0(\tilde{\mathfrak{p}})\otimes_{Z_0(\tilde{\mathfrak{p}})\cap Z_1(\ct)} Z_1(\ct)\rightarrow Z(U(\ggg,e))$ is an isomorphism of
algebras.
\end{itemize}
\end{rem}

\subsection*{Acknowledgment} The first-named author is grateful to Bingyong Xie for his helpful discussion on the proof of Proposition \ref{Regular pts}. We are grateful to Weiqiang Wang for his helpful comments and his aid to improve the exposition of an old version of the manuscript. At last, we express our deep thanks to Sasha Premet for reading the  manuscript carefully and his valuable comments.

\end{document}